
\documentclass[preprint,3p]{elsarticlenofooter}
\usepackage[italian,english]{babel}
\usepackage[usenames,dvipsnames]{xcolor}
\usepackage{amsmath}
\usepackage{esint} 
\usepackage{mathrsfs} 
\usepackage{amssymb}
\usepackage{bbm} 
\usepackage{graphicx} 
\usepackage{subfig} 
\usepackage{booktabs} 
\usepackage{tabu} 
\usepackage{numprint} 
\usepackage[hidelinks,]{hyperref} 
\usepackage{algorithm}
\usepackage{algpseudocode}
\usepackage{amsthm} 
\usepackage{indentfirst} 
\usepackage[babel]{csquotes}
\usepackage{array}
\usepackage{tabularx}
\usepackage{amstext}
\usepackage{epstopdf} 
\usepackage{epigraph}
\usepackage{enumitem} 
\usepackage{pdfpages} 
\usepackage[title]{appendix}
\usepackage{empheq}

	

\graphicspath{{img/}}


\allowdisplaybreaks


\newcommand{\tensel}{\mathbb{E}} 
\newcommand{\matm}{\beta} 
\newcommand{\compliance}{\mathcal{J}} 
\newcommand{\mass}{M} 
\newcommand{\masslim}{\overline{M}} 
\newcommand{\control}{\eta}
\newcommand{\controlset}{\mathcal{Y}}

\newcommand{\symgrad}[1]{\nabla^\text{s} {#1}} 
\newcommand{\symgradsubscript}[2]{\nabla^\text{s}_{#1} {#2}} 
\newcommand{\Itwo}{\vec{I}_2} 
\newcommand{\Ifour}{\vec{I}_4} 
\newcommand{\idxmat}{m} 
\newcommand{\mesh}{\mathcal{T}_h} 
\newcommand{\fespace}{V_h} 
\newcommand{\CHm}{\sigma} 
\newcommand{\Qrot}{\vec{Q}} 
\newcommand{\Rrot}{\vec{R}} 
\newcommand{\freeIDXzl}[1]{E^z_{#1}}
\newcommand{\freeIDXz}{F}
\newcommand{\freeIDXzUp}{G}
\newcommand{\freeIDXml}[1]{E^\matm_{#1}}
\newcommand{\rhomprime}{\rho_\idxmat^\prime}

\newcommand{\ResultsPath}{results}

\newcommand{\ResultsFormatBASE}{png}
\newcommand{\ResultsFormatBASErendering}{png}

\newcommand{\ResultsFormatCHO}{png}

\newcommand{\ResultsBASESquareDueMat}{Square_150x150_r.1_-49593.3}
\newcommand{\ResultsBASESquareDueMatRot}{Square_150x150_r.1_-49585.9}
\newcommand{\ResultsBASESquareQuattroMat}{Square_150x150_r.1_-49386.7}
\newcommand{\ResultsBASESquareOttoMat}{Square_150x150_r.1_-49582.7}
\newcommand{\ResultsBASELshape}{LshapeCorner_200x200_r.15_1.01837e+07}
\newcommand{\ResultsBASEOverbridge}{overbridge_40x200_r.15_-2.97812e+06}


%
%
%


\newcommand{\ResultsCHOSquare}{Square_200x200_r.2_r.08_1000_1_4angle_ztot.32_cont_r.1_rest_9.84637e+06}

\newcommand{\ResultsCHOLshapeUnOrdine}{LshapeCorner_200x200_ztot.35_r.1_r.08_5mat_rest_9.76245e+06}
\newcommand{\ResultsCHOLshapeTreOrdini}{LshapeCorner_200x200_ztot.35_r.1_r.08_5mat1000_1_rest_9.82596e+06}

\newcommand{\ResultsCHOOverbridge}{overbridge_CHO_60x300_r.25_r.08_continuato_r.15_merged_9.82737e+06}


\newcommand{\virgolette}[1]{``{#1}''} 

\renewcommand{\vec}[1]{\boldsymbol{\mathbf{#1}}} 
\newcommand{\diff}[1]{\,\mathrm{d}{#1}} 
\newcommand{\dx}{\diff{\vec{x}}} 
\newcommand{\laplacian}{\Delta} 
\newcommand{\diverg}[1]{\operatorname{div} {#1}} 
\newcommand{\indicator}[2]{\mathbbm{1}_{#1}(#2)} 
\renewcommand{\theta}{\vartheta}
\renewcommand{\epsilon}{\varepsilon}
\renewcommand{\phi}{\varphi}

\theoremstyle{plain}\newtheorem{theorem}{Theorem}
\theoremstyle{plain}
\theoremstyle{plain}\newtheorem{proposition}{Proposition}
\theoremstyle{plain}\newtheorem{lemma}{Lemma}
\theoremstyle{plain}
\theoremstyle{definition}
\theoremstyle{remark}\newtheorem{remark}{Remark}

\begin{document}

\begin{frontmatter}
	
	\title{Topology optimization of multiple anisotropic materials, with application to self-assembling diblock copolymers}
	
	\journal{Computer Methods in Applied Mechanics and Engineering}
	
	\author[mat]{F.~Regazzoni\corref{cor1}}
	\ead{francesco.regazzoni@polimi.it}

	\author[mat]{N.~Parolini}
	\ead{nicola.parolini@polimi.it}
	
	\author[mat]{M.~Verani}
	\ead{marco.verani@polimi.it}
	
	\address[mat]{MOX, Dipartimento di Matematica, Politecnico di Milano, Milano, Italy}
	
	\cortext[cor1]{Corresponding author. Dipartimento di Matematica, Politecnico di Milano, piazza Leonardo da Vinci 32, 20133, Milano, Italy}

	\begin{abstract} 
		We propose a solution strategy for a multimaterial minimum compliance topology optimization problem, which consists in finding the optimal allocation of a finite number of candidate (possibly anisotropic) materials inside a reference domain, with the aim of maximizing the stiffness of the body.
		As a relevant and novel application we consider the optimization of self-assembled structures obtained by means of diblock copolymers. Such polymers are a class of self-assembling materials which spontaneously synthesize periodic microstructures at the nanoscale, whose anisotropic features can be exploited to build structures with optimal elastic response, resembling biological tissues exhibiting microstructures, such as bones and wood.
		For this purpose we present a new generalization of the classical Optimality Criteria algorithm to encompass a wider class of problems, where multiple candidate materials are considered, the orientation of the anisotropic materials is optimized, and the elastic properties of the materials are assumed to depend on a scalar parameter, which is optimized simultaneously to the material allocation and orientation.	
		Well-posedness of the optimization problem and well-definition of the presented algorithm are narrowly treated and proved. The capabilities of the proposed method are assessed through several numerical tests.
	\end{abstract}

\begin{keyword}
Topology Optimization \sep Anisotropy \sep Self-assembly \sep Diblock Copolymers \sep Homogenization \sep Finite Element 
\end{keyword}
	
\end{frontmatter}

\section{Introduction} \label{sec:intro}

The geometry and the topology of structures have a great impact on their performances. Therefore, the efficient use of material is crucial in many fields of application, from automotive industry, to bioengineering or MEMS industry. This explains the great interest towards topology optimization recorded in the past decades, both in the academic and in the industrial world. The classical topology optimization (TopOpt) problem looks for the optimal distribution of a given amount of isotropic material inside a prescribed domain, in order to optimize the mechanical response of the body to a given load. The performance of the design is measured by means of the so-called compliance, to be minimized, defined as twice the elastic energy computed at equilibrium~\cite{bendsoe:book,rev1,rev2}. 

To overcome the computational complexity of large $0-1$ type integer programming problems, the shape of the body is typically tracked by a density variable taking values in $[0,1]$, and a suitable interpolation scheme penalizing densities different form $0$ and $1$ is employed, one of the most popular being the SIMP (Solid Isotropic Material with Penalization) formulation~\cite{bendsoe:article89,bendsoe:book}. An efficient algorithm to solve the minimum compliance problem is the OC (Optimality Criteria) method, a fixed point algorithm based on the optimality conditions~\cite{oc}. In alternative, methods of sequential convex programming can be employed, like CONLIN (CONvex LINearization, see~\cite{conlin}) and MMA (Method of Moving Asymptotes, see~\cite{svanberg1987method,svanberg1993method}).

Alternative approaches for TopOpt are the level-set method and the phase-field method. With the first one, the borders of the body are determined as level-sets of a scalar function defined over the domain~\cite{wang2003level}. 
With the second approach, the TopOpt process is interpreted as a phase transition process, where a functional made of two contributions is minimized: the first term is an Allen-Cahn/Cahn-Hilliard type energy, which penalizes intermediate densities by means of a double well potential as well as the mean curvature of the border of the body, and the second term is proportional to the compliance~\cite{wang2004phase,dede2012isogeometric,blank2012phase}.

\subsection{Multimaterial TopOpt} \label{sec:intro:multimat}

It is well known that bodies exhibiting microstructures can be very efficient from the structural perspective. As a matter of fact nature exhibits plenty of examples of elastic bodies that, having to resist to mechanical loadings, present a fine-scale structure: bones, for instance, exhibit a sponge structure, and wood reveals a quasi-periodic microstructure.
In fact, given a structure, it is always possible to enhance its stiffness with a refinement of the topology, i.e. by introducing holes without changing the total volume; by iterating this process one ends up with a microstructure (\cite{allaire:book}).
Moreover, a microscopic structure can endow the medium with anisotropic properties at the macroscopic level. In this way the material is made lighter, enhancing at the same time its stiffness in the direction of the load, and making it more compliant in the other directions. Therefore, anisotropy is a key feature to build structures optimized for a prescribed purpose.

Most of the TopOpt formulations and algorithms are well suited for anisotropic materials: the SIMP formulation itself, in spite of its name, can be applied to this case by simply replacing the isotropic constitutive law with an anisotropic one. However, there is few sense in optimizing the distribution of a single anisotropic material, since the optimal level and type of anisotropy may depend strongly from point to point of the domain. This observation leads in a natural way to the problem of finding the optimal distribution of a given amount of material, with the possibility of choosing in each point of the domain among void and a certain number of candidate anisotropic materials, featuring different properties.


A multimaterial TopOpt has been formulated in a SIMP framework in~\cite{sigmund1997design} and~\cite{gibiansky2000multiphase}. In these works periodic microstructures made of two isotropic phases and void are optimized at the microscopic level, employing one design variable to track the topology of the structure, and one design variable to control the balance between the two phases. 
A constraint on the total amount of material was set independently for each phase.

The formulation was generalized in~\cite{stegmann2005discrete,lund2005structural,lund2009buckling}, under the name of DMO (Discrete Material Optimization), to encompass an arbitrary number of phases. In these works the resource constraint is set on the total mass, rather that on mass of the single phases. For the numerical resolution of the optimization problem MMA was employed. In this formulation the interpolation between phases is such that the increase of the density associated with one phase automatically penalizes other phases. A different generalization of the SIMP (and RAMP) formulation was given in~\cite{hvejsel2011material}. Also in this case the optimization problem was solved by means of the MMA.

In \cite{zuo2017multi} an ordered SIMP interpolation was proposed to solve multimaterial TopOpt problems without the introduction of new variables. With this formulation however, since the choice among the candidate materials is determined by a single variable, gradient-based update schemes are somehow short-sighted, making the numerical solution very likely to fall into local minima which may be far from the global minimum, as shown by the numerical results reported in the paper.

In~\cite{yin2001topology} a multimaterial TopOpt problem is solved by means of a peak functions interpolation schemes. The interpolation of different phases is obtained by means of gaussian peaks, which are gradually made sharper to steer the solution towards the selection of a pure phases. The advantage of this approach is that the number of phases can be increased without changing the number of design variables.

The SFP (Shape Functions with Penalization) method~\cite{bruyneel2011sfp} exploits bilinear finite element shape functions as weights to interpolate among material phases. The shape functions are defined over a reference quadrangular element, each of whose vertexes represents a pure phase.

In~\cite{tavakoli2014alternating} an alternating active phase algorithm for multimaterial TopOpt was proposed. The algorithm splits the multimaterial optimization problem into a sequence of two-phases sub-problems, which are solved using traditional binary phase optimization solvers, by means of the OC method.

In~\cite{wang2004color,wang2005design} the Colour Level Set method, an extension of the level sets approach to multiple materials, was presented. In this approach $n$ level-set functions were employed to represent the distribution of $2^n$ different materials, mimicking the combination of $n$ primary colours. 

The phase field method was generalized to three phases (void included) in~\cite{wang2004synthesis}, exploiting a periodic phase-field variable which determines the mixture of the different phases. A different generalization, available for an arbitrary number of phases, was presented in~\cite{zhou2007multimaterial,blank2014multi} with a modified Cahn-Hilliard equation, and in~\cite{tavakoli2014multimaterial} with a modified Allen-Cahn equation. In these works, however, a mass constraint is set for each phase separately.

With regard to truss TopOpt, a multimaterial version of the minimum compliance problem was proposed in \cite{zhang2017multi}, where a problem with a generalized volume constraints setting was solved by means of a design update scheme (named ZPR), which updates the design variables associated with each volume constraint independently.


\subsection{TopOpt of self-assembling materials} \label{sec:self-assembly}

Unfortunately, there are technical constraints in realizing the microstructures predicted by the theory, since traditional manufacturing methods do not allow the realization of this kind of patterns at a low cost. Additive manufacturing allowed to partially fill this gap, making possible the realization of virtually any kind of periodic pattern at a very fine scale. However, the idea which motivates the present work is to realize microstructures in a completely different way, which can bring to much finer resolutions, by exploiting self-assembling materials. 

Self-assembly is an exciting chemical-physical phenomenon by which a disordered arrangement of components re-arrange in an organized structure, as a consequence of local interactions (typically of chemical nature). The possibility of predict and even control the kind of pattern opens to a huge number of applications, which go far beyond the structural field, thanks to the possibility of controlling the mechanical, magnetic, electronic and optical properties of the material.

One of the most studied class of self-assembling materials is that of diblock copolymers~\cite{hamley1998book}, linear chain molecules made of two subchains of monomers of type $A$ and $B$, covalentely joined to each other. When the temperature of a disordered melt of diblock copolymers undergoes a critical value, the competition between repulsion and chemical bounds of the subchains makes the melt segregate into A-rich and B-rich domains, forming regular periodic patterns, with characteristic length ranging from 1 to 50 $nm$. The resulting pattern depends mainly on the dimensionless parameter $\CHm = (N_A - N_B) / (N_A+N_B)$, taking values in $[-1,1]$, where $N_A$ and $N_B$ are the number of monomers of type $A$ and $B$ in each molecule. In three dimensions the observed patterns are of four types, namely lamellae, double gyroids, hexagonally packed cylinders, and spheres~\cite{hamley1998book}, while in two dimensions we have two possible patterns, namely stripes and spots~\cite{choksi2011_2d} (see Figure~\ref{fig:CHm_dependence}). We notice that inside each pattern class (e.g. stripes in 2D) the pattern changes continuously with respect to $\CHm$, while keeping fixed its topology (e.g. $A$-stripes get thicker with respect to $B$-stripes as $\CHm$ increases from $-0.2$ to $0.2$).

\begin{figure}
	\centering
	\includegraphics[width=.8\textwidth,trim={0 13.25cm 0 0}, clip]{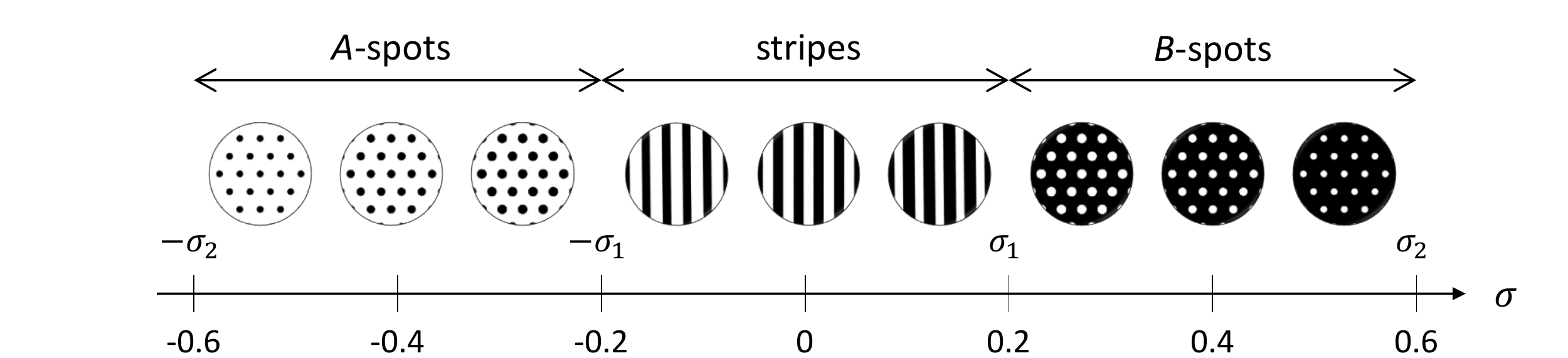}
	\caption{Regular patterns formed by diblock copolymers in 2D, for different values of $\CHm$, for $\gamma = 20$ (see Section~\ref{sec:applicationtodiblockcopolymers} for the meaning of $\gamma$). Black denotes material $A$, white material $B$.}
	\label{fig:CHm_dependence}
\end{figure}

From a mathematical perspective, the phase separation of diblock copolymers is modelled as the $H^{-1}$ gradient flow of the Ohta-Kawasaki functional~\cite{ohta1986equilibrium}, the Allen-Cahn/Cahn-Hilliard energy supplied with a non-local term modelling long range interactions. By solving the corresponding nonlinear parabolic PDE, known as Cahn-Hilliard-Oono (CHO) equation~\cite{oono1990cell}, until the system has reached the equilibrium, it is possible to get a numerical approximation of the periodic pattern associated with a given value of $\CHm$~\cite{choksi2009_3d,choksi2003derivation}. Finally, by means of homogenization theory~\cite{allaire:book,francfort1986homogenization,murat1997hconvergence}, it is possible to get the homogenized stiffness tensor $\tensel(\CHm)$, namely the effective tensor which models at the macroscopic level the elastic properties of the periodic pattern obtained with a given value of $\CHm$.

One of the motivations of the present work is the concept of obtaining optimized bodies by laying down a hot diblock copolymers solution, changing from place to place the composition $\CHm$, and letting the solution cool, so that phase separation takes place and produces the desired pattern. The task of forcing the orientation of the patterns can be accomplished by means of the so-called directed self-assembly (DSA) technologies (see~\cite{lin2005self,stoykovich2005directed,darling2007directing}).

This again leads to the problem of finding the optimal distribution inside a domain of a given number of candidate materials. In this case, strictly speaking, the material is the same for each value of $\CHm$ (namely an arrangement of diblock copolymers); however, since it exhibits different kinds of microstructures (lamellae, gyroids, etc.) according to the local value of $\CHm$, it is described by different homogenized tensors, and can thus be regarded as different materials, one for each kind of microstructure.


Motivated by these issues, in this work we introduce a novel generalization of the so-called Optimality Criteria method for the multimaterial TopOpt problem, where materials are allowed to change their orientation and have elastic properties which possibly depend on a scalar variable. 
The algorithm requires the determination of two Lagrange multipliers, namely a scalar, associated with the volume constraint, and a function of space, associated with a local constraint on the sum of the densities in each point of the domain. For the determination of such multipliers we employ two nested bisection loops.
The proposed algorithm is then applied to the problem of controlling the phase separation of diblock copolymers melts, in order to exploit such self-assembled microstructures to design bodies with high elastic performances.

\subsection{Outline}

The outline of the paper is as follows. In Section~\ref{sec:pb_topot} we formulate the \textit{basic} and the \textit{generalized} multimaterial TopOpt problems considered throughout this work, and we prove existence of solutions. In Section~\ref{sec:algorithm} we derive the optimality conditions for both problems, and we propose an algorithm for their numerical resolution. Moreover we prove that the proposed fixed-point scheme is compliant with the derived optimality conditions, and we show its well-definition. In Section~\ref{sec:applicationtodiblockcopolymers} we show how the problem of directing the self-assembly of diblock copolymers to get bodies with optimal elastic properties can be placed in the framework introduced in the previous sections. Section~\ref{sec:results} provides numerical examples assessing the properties of the proposed TopOpt scheme. Section~\ref{sec:conclusions} provides final remarks on the presented methodology and outlines for future research.

\section{Continuous optimization problems} \label{sec:pb_topot}

In this section we introduce the multimaterial TopOpt problem we want to solve. The problem is firstly introduced in its basic version~\eqref{eqn:pb_ct_base}, and it is later generalized in~\eqref{eqn:pb_ct_generalized}. Then the filtered version of both problems is introduced.

\subsection{Multi-material TopOpt: basic problem}

We suppose to be given a design domain $\Omega \subset \mathbb{R}^d$ (where $d=2$ or $3$), where the structure must be contained in. The boundary of the domain is partitioned into a Dirichlet part $\Gamma_D$, where the body is anchored, and a Neumann part $\Gamma_N$. We then suppose to be given a bulk load $\vec{b} \in H^{-1}(\Omega;\mathbb{R}^d)$ and a boundary load $\vec{t} \in H^{-1/2}(\Gamma_N)$. We want to optimize the mechanical response of the body to the forcing terms $\vec{b}$ and $\vec{t}$. Denoting by $\vec{u}$ the displacement field, the equilibrium equation is given by:
\begin{equation}
	\label{eqn:stateeqn_weak}
		\text{find } \vec{u} \in H^{1}_{\Gamma_D}(\Omega; \mathbb{R}^d) \quad \text{s.t.} \quad
		a(\vec{u},\vec{v}) = l(\vec{v}) \quad \forall \vec{v} \in H^{1}_{\Gamma_D}(\Omega; \mathbb{R}^d)\, ,
\end{equation}
where, denoting by $\symgrad{(\cdot)} = \frac{\nabla(\cdot) + {\nabla(\cdot)}^T}{2}$ the symmetric part of the gradient operator, we have defined the following bilinear and linear forms:
\begin{equation}
		a(\vec{u},\vec{v}) = \int_{\Omega} \tensel \symgrad{\vec{u}} : \symgrad{\vec{v}} \dx \, , \qquad
		l(\vec{v}) = \langle\vec{b},\vec{v}\rangle_{H^{-1},H^{1}_{\Gamma_D}} + \langle\vec{t},\vec{v}\rangle_{H^{-1/2},H^{1/2}} \, .
\end{equation}
The stiffness tensor $\tensel$ is determined by the layout of the structure, as detailed below. In this work we consider the problem of finding the optimal distribution of $N$ candidate materials, characterized by the stiffness tensors $(\tensel_\idxmat)_{\idxmat=1,\dots,N}$. 
The structure is described by $N$ design variables $\left(z_\idxmat:\Omega \rightarrow \{0,1\}\right)_{\idxmat=1,\dots,N}$, with the following meaning:
\begin{equation*}
z_\idxmat(\vec{x}) =  
\begin{cases}
1 & \text{if material $\idxmat$ is present in $\vec{x}$}  \\
0 & \text{otherwise.}
\end{cases}
\end{equation*}
To avoid materials to overlap, the following constraint should be adopted:
\begin{equation}
\label{eqn:topopt_multimat_constraint_sum_z}
\sum_{\idxmat=1}^{N} z_\idxmat(\vec{x}) \leq 1 \qquad \forall\, \vec{x} \in \Omega.
\end{equation}
Notice that in each point of the domain either a single $z_\idxmat$ is one and the others are zero, or all $z_\idxmat$ are zero (in the points where the body is not present). In this setting the resulting stiffness tensor is given by:
\begin{equation}
\label{eqn:SIMP_linear_interpolation}
\tensel(\vec{x}) = \sum_{\idxmat=1}^{N} z_\idxmat(\vec{x}) \tensel_\idxmat \, .
\end{equation}
To avoid trivial solutions we introduce a resource constraint $\masslim$ on the total mass. Denoting by $\rho_\idxmat$ the mass density of the material $\idxmat$, the mass of the body is given by:
\begin{equation}
\label{eqn:mass_definition}
\mass = \sum_{\idxmat=1}^{N} \int_{\Omega} z_\idxmat(\vec{x}) \rho_\idxmat \dx\, .
\end{equation}
	\begin{remark}
	The mass density $\rho_\idxmat$ may be interpreted more generally as the \textit{cost} associated to the material $\idxmat$, and the constraint $\masslim$ as the amount of available resources.
	\end{remark}
Since the compliance, to be minimized, is given by $\compliance = l(\vec{u})$, we are led to the following formulation of the minimum compliance problem:
\begin{subequations} \label{eqn:pb_ct_naive}	
\begin{empheq}[left=\empheqlbrace]{align}
   &     \textstyle 
\underset{(z_\idxmat)_\idxmat}{\text{minimize}} 
     & & \textstyle  
l(\vec{u}) 
\\ &     \textstyle 
\text{subject to:} 
     & & \textstyle 
a(\vec{u},\vec{v}) = l(\vec{v}) \quad \forall \vec{v} \in H^{1}_{\Gamma_D}(\Omega; \mathbb{R}^d) 
\\ & & & \textstyle 
\mass \leq \masslim 
\\ & & & \textstyle 
z_\idxmat(\vec{x}) \in \{0,1\} \qquad \forall\, \vec{x} \in \Omega \text{ for } \idxmat=1,\dots,N
\\ & & & \textstyle 
\sum_{\idxmat=1}^{N} z_\idxmat(\vec{x}) \leq 1 \qquad \forall\, \vec{x} \in \Omega \textstyle 
\\ &     \textstyle 
\text{where:} 
     & & \textstyle 
\tensel(\vec{x}) = \sum_{\idxmat=1}^{N} z_\idxmat(\vec{x}) \tensel_\idxmat 
\\ & & & \textstyle 
\mass = \sum_{\idxmat=1}^{N} \int_{\Omega} z_\idxmat(\vec{x}) \rho_\idxmat \dx\, .
\end{empheq}
\end{subequations}
As it is well-known, the discrete nature of~\eqref{eqn:pb_ct_naive} makes it hard to be handled when the number of design variables grows. Hence, typically in TopOpt the discrete design variables $z_\idxmat:\Omega \rightarrow \{0,1\}$ are replaced by continuous ones $z_\idxmat:\Omega \rightarrow [0,1]$, and the intermediate densities are penalized. One of the most popular penalization strategies comes under the name of SIMP (Solid Isotropic Material with Penalization, see~\cite{bendsoe:article89,bendsoe:book}), and envisages the substitution of the linear interpolation~\eqref{eqn:SIMP_linear_interpolation} with the following power-law interpolation, where $p$ is typically set to 3:
\begin{equation}
\label{eqn:SIMP_powerlaw_interpolation}
\tensel(\vec{x}) = \sum_{\idxmat=1}^{N} z_\idxmat(\vec{x})^p \tensel_\idxmat  \, .
\end{equation}
Moreover, to avoid singularities in the equilibrium equation, a lower bound $z_{min}>0$ for the design variables is introduced.

The SIMP formulation of the problem reads as follows:
\begin{subequations} \label{eqn:pb_ct_base}	
	\begin{empheq}[left=\empheqlbrace]{align}
	&     \textstyle 
	\underset{(z_\idxmat)_\idxmat}{\text{minimize}} 
	& & \textstyle  
	l(\vec{u}) 
	\\ &     \textstyle 
	\text{subject to:} 
	& & \textstyle 
	a(\vec{u},\vec{v}) = l(\vec{v}) \quad \forall \vec{v} \in H^{1}_{\Gamma_D}(\Omega; \mathbb{R}^d) 
	\\ & & & \textstyle 
	\mass \leq \masslim 
	\\ & & & \textstyle 
	z_{min} \leq z_\idxmat(\vec{x}) \leq 1 \qquad \forall\, \vec{x} \in \Omega \text{ for } \idxmat=1,\dots,N
	\\ & & & \textstyle 
	\sum_{\idxmat=1}^{N} z_\idxmat(\vec{x}) \leq 1 \qquad \forall\, \vec{x} \in \Omega \textstyle 
	\\ &     \textstyle 
	\text{where:} 
	& & \textstyle 
	\tensel(\vec{x}) = \sum_{\idxmat=1}^{N} z_\idxmat(\vec{x})^p \tensel_\idxmat 
	\\ & & & \textstyle 
	\mass = \sum_{\idxmat=1}^{N} \int_{\Omega} z_\idxmat(\vec{x}) \rho_\idxmat \dx\, .
	\end{empheq}
\end{subequations}

\subsection{Multi-material TopOpt: generalized problem}

In this work, besides the basic problem~\eqref{eqn:pb_ct_base}, we will consider a generalized multimaterial TopOpt problem. First of all, since the mechanical response of an anisotropic structure depends strongly on its orientation, we consider the problem of optimizing the local orientation of the materials. For this purpose we introduce a new set of control variables, $\theta_\idxmat(\vec{x})$, for $\idxmat=1,\dots,N$. Such variables, in general, belong to the rotation group SO($d$), but in two dimensions they can be represented by a point of the interval $[0,2\pi)$, the rotation angle, while in three dimensions they can be represented by the three Euler's angles. We denote by $\Qrot(\theta)$ the 8-th order tensor associated with the rotation of 4-th order tensors. For instance, in two dimensions, interpreting $\theta$ as the counter-clockwise rotation angle of the material, we have:
\begin{equation}
\label{eqn:tensorQ_tensorR}
Q_{ijklpqrs}(\theta) = R_{ip}(\theta)R_{jq}(\theta)R_{kr}(\theta)R_{ls}(\theta) 
\, , \qquad
\Rrot(\theta) =
\begin{pmatrix}
\cos(\theta) & -\sin(\theta) \\
\sin(\theta) & \cos(\theta)
\end{pmatrix} \, .
\end{equation}
The tensor $\Qrot(\theta)$ allows to model the stiffness tensor of the rotated materials, by replacing expression~\eqref{eqn:SIMP_powerlaw_interpolation} with:
\begin{equation*}
\tensel(\vec{x}) = \sum_{\idxmat=1}^{N} z_\idxmat(\vec{x})^p  \Qrot(\theta_\idxmat(\vec{x})) \tensel_\idxmat \, .
\end{equation*}
As a further generalization, we consider the case when the elastic properties of the candidate materials depend on an additional scalar variable. In this case, instead of $N$ materials, we have $N$ \textit{classes} of materials, whose properties are parametrized by the variables $\matm_\idxmat \in [\underline{\matm_\idxmat},\overline{\matm_\idxmat}]$, for $\idxmat=1,\dots,N$. For instance, when the anisotropic material consists in a microstructure, the variables $\matm_\idxmat$ may describe its microscopical properties. This is the case of self-assembling diblock copolymers, where the $N$ classes are the possible topologies of patterns (e.g. in 2D $A$-spots, stripes and $B$-spots), while the monomers proportion $\CHm$ plays the role of the variables $\matm_\idxmat$ (see Section~\ref{sec:self-assembly}). The application of the current framework to the case of diblock copolymers will be extensively treated in Section~\ref{sec:applicationtodiblockcopolymers}.

Notice that also the specific weight $\rho_\idxmat$ possibly depends on $\matm_\idxmat$. In the following we will suppose that the meaning of $\matm_\idxmat$ is chosen in such a way that $\matm_\idxmat \mapsto \rho_\idxmat(\matm_\idxmat)$ is a not decreasing function.
Therefore, assuming that the parameters $\matm_\idxmat$ can change in space (i.e. $\matm_\idxmat = \matm_\idxmat(\vec{x})$), we consider the following generalized problem:
\begin{subequations} \label{eqn:pb_ct_generalized}	
	\begin{empheq}[left=\empheqlbrace]{align}
	&     \textstyle 
	\underset{(z_\idxmat,\matm_\idxmat,\theta_\idxmat)_\idxmat}{\text{minimize}}
	& & \textstyle  
	l(\vec{u}) 
	\\ &     \textstyle 
	\text{subject to:} 
	& & \textstyle 
	a(\vec{u},\vec{v}) = l(\vec{v}) \quad \forall \vec{v} \in H^{1}_{\Gamma_D}(\Omega; \mathbb{R}^d) 
	\\ & & & \textstyle 
	\mass \leq \masslim 
	\\ & & & \textstyle 
	z_{min} \leq z_\idxmat(\vec{x}) \leq 1 \qquad \forall\, \vec{x} \in \Omega \text{ for } \idxmat=1,\dots,N
	\\ & & & \textstyle 
	\sum_{\idxmat=1}^{N} z_\idxmat(\vec{x}) \leq 1 \qquad \forall\, \vec{x} \in \Omega \textstyle 
	\\ & & & \textstyle 
	\underline{\matm}_\idxmat \leq \matm_\idxmat(\vec{x}) \leq \overline{\matm}_\idxmat \qquad \forall\, \vec{x} \in \Omega \text{ for } \idxmat=1,\dots,N
	\\ &     \textstyle 
	\text{where:} 
	& & \textstyle 
	\tensel(\vec{x}) = \sum_{\idxmat=1}^{N} z_\idxmat(\vec{x})^p \Qrot(\theta_\idxmat(\vec{x})) \tensel_\idxmat(\matm_\idxmat(\vec{x}))
	\\ & & & \textstyle 
	\mass = \sum_{\idxmat=1}^{N} \int_{\Omega} z_\idxmat(\vec{x}) \rho_\idxmat(\matm_\idxmat(\vec{x})) \dx \, .
	\end{empheq}
\end{subequations}
In the following, we will suppose that the parameters $z_{min}$ and $\masslim$ have non-trivial values, namely:
\begin{subequations}
	\label{eqn:topopt_multimat_nontrivialparam}
	\begin{equation}
	\label{eqn:topopt_multimat_nontrivialparam1}
	0 < z_{min} < \frac{1}{N} \, ,
	\end{equation}
	\begin{equation}
	\label{eqn:topopt_multimat_nontrivialparam2}
	\masslim > |\Omega| z_{min} \sum_{\idxmat=1}^{N} \rho_\idxmat(\overline{\matm}_\idxmat)\, .
	\end{equation}
\end{subequations}

\subsection{On the existence of solutions}
It is well known that TopOpt problems lack of existence of solution in general (in~\cite{allaire:book} some counterexamples are shown). The ill-posedness of TopOpt problems is caused by the possibility of progressively improving the performances of a structure by topology refinements. In other words, it is possible to build a sequence of finer and finer designs with growing stiffness, without the sequence converging to any admissible design.

%

This translates into numerical instabilities when one tries to solve computationally the problem. When the problem is discretized, the resolution of the solution is bounded by that of the computational mesh. Thus, when the mesh is refined, the solution may exhibit a refinement of the topology, hindering mesh independence. Moreover, numerical solutions obtained with linear finite elements often exhibit checkerboard patterns, because of a bad numerical modelling that overestimates the stiffness of this kind of configurations~\cite{diaz1995checkerboard}, which have no physical relevance and are mesh-dependent.

This kind of problems is typically tackled by restricting the set of admissible design ruling out the possibility for fine structures to formate. This can be obtained by explicitly penalizing the \textit{total variation} ($TV$) of the density (by the so-called \textit{perimeter penalization}, see for instance~\cite{borrvall:perimeter}), or by filtering the design variables or their sensitivities by means of a convolution kernel (see for instance~\cite{sigmund:checkerboards}). 


The approach considered in this work is equivalent to that known in the literature as \textit{density filtering}, but since in this framework there are other design variables besides the densities ($\matm_\idxmat$ and $\theta_\idxmat$) we will refer to it as \textit{variables filtering}. 

Consider a non-negative Lipschitz convolution kernel $K$, such as:
\begin{equation}
\label{eqn:topopt_K_definition}
K(\vec{x}) = \max\{0, \|\vec{x}\| - r_{min} \}\, ,
\end{equation}
where the convolution radius $r_{min}$ determines the minimum admissible length-scale. Given a generic variable $\psi$ (where $\psi = z_\idxmat,\matm_\idxmat,\theta_\idxmat$), we denote by $\hat{\psi}$ the filtered variable, obtained through a normalized convolution with $K$, i.e.
\begin{equation}
\label{eqn:topopt_filtering_definition}
\hat{\psi}(\vec{x}) 
= \frac{(K * \psi) (\vec{x})}{(K * 1) (\vec{x})}  
=\frac{\int_{\Omega}  K(\vec{x} - \vec{z}) \psi(\vec{z}) \diff{\vec{z}}} {\int_{\Omega}K(\vec{x} - \vec{z}) \diff{\vec{z}}}  \, .
\end{equation}
In the following we will suppose that the denominator in~\eqref{eqn:topopt_filtering_definition} is bounded from below (notice that this assumption is violated only by very weird domains):
\begin{equation}
\label{eqn:topopt_filtering_lowerboundK}
\int_{\Omega}K(\vec{x} - \vec{z}) \diff{\vec{z}} \geq K_{min} \qquad \forall \, \vec{x} \in \Omega\, .
\end{equation}
Then, in equations~\eqref{eqn:SIMP_powerlaw_interpolation} and~\eqref{eqn:mass_definition}, we replace the original variables with the filtered ones. We remark that the filtered variables satisfy the same constraints as the original ones, thanks to the following lemma, whose proof follows easily from the definition of filtered variables, and from linearity of the convolution operator:
\begin{lemma}
	\label{lemma:filtered_constraints}
	Suppose that $\psi \in L^1(\Omega)$ is such that $\psi(\vec{x}) \leq C$ a.e. in $\Omega$ for some $C \in \mathbb{R}$. Then its filtered counterpart satisfies $\hat{\psi}(\vec{x}) \leq C$ for all $\vec{x} \in \Omega$.
\end{lemma}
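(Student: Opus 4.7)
The plan is to prove the pointwise bound by exploiting three facts: the non-negativity of the kernel $K$, the linearity/monotonicity of the integral defining the convolution, and the strictly positive lower bound on the denominator guaranteed by assumption~\eqref{eqn:topopt_filtering_lowerboundK}.

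First, I would fix an arbitrary $\vec{x} \in \Omega$ and consider the integrand $K(\vec{x}-\vec{z})\psi(\vec{z})$ as a function of $\vec{z}$. Since $\psi(\vec{z}) \leq C$ for almost every $\vec{z} \in \Omega$ and $K \geq 0$ everywhere, we get the pointwise (in $\vec{z}$, a.e.) bound
\begin{equation*}
K(\vec{x}-\vec{z})\,\psi(\vec{z}) \;\leq\; C\,K(\vec{x}-\vec{z}).
\end{equation*}
Integrating this inequality over $\Omega$ with respect to $\vec{z}$ preserves the direction of the inequality, which yields
\begin{equation*}
\int_{\Omega} K(\vec{x}-\vec{z})\,\psi(\vec{z})\diff{\vec{z}} \;\leq\; C\int_{\Omega} K(\vec{x}-\vec{z})\diff{\vec{z}}.
\end{equation*}

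Next, by~\eqref{eqn:topopt_filtering_lowerboundK} the denominator $\int_{\Omega} K(\vec{x}-\vec{z})\diff{\vec{z}}$ is bounded below by $K_{\min} > 0$, hence in particular it is strictly positive, so dividing both sides by it preserves the inequality. This gives $\hat{\psi}(\vec{x}) \leq C$ directly from the definition~\eqref{eqn:topopt_filtering_definition}. Since $\vec{x} \in \Omega$ was arbitrary, the bound holds for every $\vec{x} \in \Omega$.

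There is essentially no obstacle here: the argument only uses monotonicity of the Lebesgue integral under a non-negative weight and strict positivity of the denominator. The only subtle point worth remarking is that the conclusion is pointwise (not merely almost everywhere), which is consistent with $\hat{\psi}$ being defined pointwise through the convolution integral for every $\vec{x}$ rather than as an equivalence class in $L^1$. An entirely analogous argument (reversing the inequality) would yield the corresponding lower bound, so the lemma also implies that $L^\infty$-type two-sided bounds are preserved by filtering, a fact that will be used to verify that the filtered design variables inherit the box constraints imposed in problems~\eqref{eqn:pb_ct_base} and~\eqref{eqn:pb_ct_generalized}.
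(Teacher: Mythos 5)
Your argument is correct and is precisely the elementary proof the paper has in mind: the paper does not write it out, stating only that the lemma "follows easily from the definition of filtered variables, and from linearity of the convolution operator," and your chain (pointwise bound on the integrand via non-negativity of $K$, monotonicity of the integral, division by the strictly positive denominator guaranteed by~\eqref{eqn:topopt_filtering_lowerboundK}) is exactly that. Your closing remark that the two-sided bounds and hence the box constraints are inherited by the filtered variables matches how the lemma is actually used in the paper.
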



To sum up, the filtered optimization problem reads as follows:
\begin{subequations} \label{eqn:pb_ct_generalized_filtered}	
	\begin{empheq}[left=\empheqlbrace]{align}
	&     \textstyle 
	\underset{(z_\idxmat,\matm_\idxmat,\theta_\idxmat)_\idxmat}{\text{minimize}}
	& & \textstyle  
	l(\vec{u}) 
	\\ &     \textstyle 
	\text{subject to:} 
	& & \textstyle 
	a(\vec{u},\vec{v}) = l(\vec{v}) \quad \forall \vec{v} \in H^{1}_{\Gamma_D}(\Omega; \mathbb{R}^d) 
	\\ & & & \textstyle 
	\mass \leq \masslim 
	\\ & & & \textstyle 
	z_{min} \leq z_\idxmat(\vec{x}) \leq 1 \qquad \forall\, \vec{x} \in \Omega \text{ for } \idxmat=1,\dots,N
	\\ & & & \textstyle 
	\sum_{\idxmat=1}^{N} z_\idxmat(\vec{x}) \leq 1 \qquad \forall\, \vec{x} \in \Omega \textstyle 
	\\ & & & \textstyle 
	\underline{\matm}_\idxmat \leq \matm_\idxmat(\vec{x}) \leq \overline{\matm}_\idxmat \qquad \forall\, \vec{x} \in \Omega \text{ for } \idxmat=1,\dots,N
	\\ &     \textstyle 
	\text{where:} 
	& & \textstyle 
	\tensel(\vec{x}) = \sum_{\idxmat=1}^{N} \hat{z}_\idxmat(\vec{x})^p \Qrot(\hat{\theta}_\idxmat(\vec{x})) \tensel_\idxmat(\hat{\matm}_\idxmat(\vec{x}))
	\\ & & & \textstyle 
	\mass = \sum_{\idxmat=1}^{N} \int_{\Omega} \hat{z}_\idxmat(\vec{x}) \rho(\hat{\matm}_\idxmat(\vec{x})) \dx \, .
	\end{empheq}
\end{subequations}

\begin{remark} 
In the following we will refer to the filtered variables as \virgolette{physical variables}, and to the original ones as \virgolette{control variables}. Indeed the first ones are the only ones with physical relevance, while the second ones are mere mathematical tools. Therefore the solution of the optimization problem is represented by the physical variables. This is the reason why the total mass is computed with respect to the filtered physical variables.
\end{remark}

The regularizing effect of the filtering operator allows to recover compactness in the set of admissible designs, making it possible to prove the following existence result for the minimum compliance problem.

\begin{theorem}
	\label{thm:topopt_exists}
	Let $\Omega \in \mathbb{R}^d$  be a bounded Lipschitz domain. Consider a subset $\Gamma_D$ of $\partial \Omega$ of positive $(d-1)$-dimensional Hausdorff measure. Let $K \in L^1(\mathbb{R}^d)$ be a non-negative Lipschitz function satisfying~\eqref{eqn:topopt_filtering_lowerboundK}. Suppose then that the specific weights $\rho_\idxmat(\matm)$ are continuous functions of their arguments, and that the stiffness tensors $\tensel_\idxmat(\matm)$ are as well continuous in $\matm$ and equicoercive, i.e. there exists a constant $\alpha > 0$ such that:
	\begin{equation*}
	\tensel_\idxmat(\matm) \vec{\xi} : \vec{\xi} \geq \alpha | \vec{\xi} |^2 \qquad 
	\forall \, \vec{\xi} \in \mathbb{R}^{d \times d}, \quad 
	\forall \matm \in [\underline{\matm_\idxmat},\overline{\matm_\idxmat}], \quad
	\text{for } \idxmat=1,\dots,N .
	\end{equation*} 
	Then, for every forcing term $(\vec{b},\vec{t}) \in H^{-1}(\Omega;\mathbb{R}^d) \times H^{-1/2}(\Gamma_N)$, the problem~\eqref{eqn:pb_ct_generalized_filtered} admits at least a solution.
\end{theorem}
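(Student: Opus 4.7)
The plan is to apply the direct method of the calculus of variations. I start with a minimizing sequence of admissible controls $(z_\idxmat^n,\matm_\idxmat^n,\theta_\idxmat^n)_n$ with associated displacements $\vec{u}^n$ solving the state equation. All control variables live in fixed bounded intervals, so they are uniformly bounded in $L^\infty(\Omega)$; by Banach--Alaoglu a subsequence (not relabeled) converges weakly-$\ast$ in $L^\infty(\Omega)$ to some limits $z_\idxmat^\star,\matm_\idxmat^\star,\theta_\idxmat^\star$. The box constraints and the local sum-to-one inequality define $\sigma(L^\infty,L^1)$-closed convex subsets of $L^\infty(\Omega)$, hence survive the weak-$\ast$ limit.

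The crucial step is to upgrade this weak convergence of the controls to strong convergence of the filtered variables. For any fixed $\vec{x}\in\overline{\Omega}$ the function $\vec{z}\mapsto K(\vec{x}-\vec{z})$ belongs to $L^1(\Omega)$, so weak-$\ast$ convergence of any $\psi^n$ immediately gives pointwise convergence of $(K\ast\psi^n)(\vec{x})$, and hence, via the uniform lower bound~\eqref{eqn:topopt_filtering_lowerboundK}, of $\hat{\psi}^n(\vec{x})$. Moreover, since $K$ is Lipschitz and $\Omega$ is bounded, a direct estimate on the quotient in~\eqref{eqn:topopt_filtering_definition} using the $L^\infty$-bound on $\psi^n$ shows that $\hat{\psi}^n$ is equi-Lipschitz on $\overline{\Omega}$. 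Arzelà--Ascoli then turns pointwise into uniform convergence: applied to each of $z_\idxmat^n,\matm_\idxmat^n,\theta_\idxmat^n$, this produces a common further subsequence along which $\hat{z}_\idxmat^n,\hat{\matm}_\idxmat^n,\hat{\theta}_\idxmat^n$ converge uniformly on $\overline{\Omega}$ to the respective filterings of $z_\idxmat^\star,\matm_\idxmat^\star,\theta_\idxmat^\star$.

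The rest is continuity bookkeeping. The continuity hypotheses on $\matm\mapsto\tensel_\idxmat(\matm)$, $\matm\mapsto\rho_\idxmat(\matm)$ and $\theta\mapsto\Qrot(\theta)$, combined with the uniform convergence of the filtered fields, yield $\tensel^n\to\tensel^\star$ uniformly on $\Omega$ and $\mass^n\to\mass^\star$; in particular the mass constraint is preserved. The filtered densities still satisfy $\hat{z}_\idxmat^n\geq z_{min}>0$ (the elementary lower-bound analogue of Lemma~\ref{lemma:filtered_constraints}), and because $\Qrot(\theta)$ preserves the coercivity constant of a fourth-order symmetric tensor, each $\tensel^n$ is coercive with constant at least $\alpha z_{min}^p$ uniformly in $n$. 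Korn's inequality then bounds $\vec{u}^n$ uniformly in $H^1_{\Gamma_D}(\Omega;\mathbb{R}^d)$, so along a subsequence $\vec{u}^n\rightharpoonup\vec{u}^\star$. Writing
\begin{equation*}
\int_{\Omega}\tensel^n\,\symgrad{\vec{u}^n}:\symgrad{\vec{v}}\dx
=\int_{\Omega}(\tensel^n-\tensel^\star)\,\symgrad{\vec{u}^n}:\symgrad{\vec{v}}\dx
+\int_{\Omega}\tensel^\star\,\symgrad{\vec{u}^n}:\symgrad{\vec{v}}\dx,
\end{equation*}
the first term vanishes by the uniform convergence of $\tensel^n$ against the uniform $H^1$-bound on $\vec{u}^n$, and the second converges to the expected limit by weak convergence, so $\vec{u}^\star$ is identified as the state associated with $\tensel^\star$. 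Continuity of $l$ on $H^1_{\Gamma_D}$ yields $l(\vec{u}^n)\to l(\vec{u}^\star)$, so $(z_\idxmat^\star,\matm_\idxmat^\star,\theta_\idxmat^\star)$ attains the infimum.

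I expect the one delicate point of the argument to be the equi-Lipschitz estimate on the filtered variables: this is exactly where the regularizing effect of the convolution kernel $K$ (together with~\eqref{eqn:topopt_filtering_lowerboundK}) is used, and it is what restores the compactness missing in the unfiltered formulation. Once uniform convergence of the filtered fields has been secured, the strongly non-convex dependence of $\tensel$ on them is harmless, since composition of continuous functions with a uniformly convergent sequence is still uniformly convergent.
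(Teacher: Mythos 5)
Your proof is correct and follows essentially the same route as the paper's: the direct method, weak-$*$ compactness of the control variables, and the equi-Lipschitz/Arzel\`a--Ascoli upgrade to uniform convergence of the \emph{filtered} variables, which is indeed the crux of the argument. The only (immaterial) divergence is in the final step: the paper establishes strong $H^1$ continuity of the design-to-solution map via the a priori estimate $\left\| \vec{u}_2 - \vec{u}_1 \right\|_{H^1} \leq C \left\| \tensel_1 - \tensel_2 \right\|_{L^{\infty}}$, whereas you settle for a uniform $H^1$ bound, weak subsequential convergence of the states, and identification of the weak limit by a perturbation argument --- which suffices because the compliance $l(\cdot)$ is a bounded linear functional and therefore passes to the limit under weak convergence.
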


The proof, reported in Appendix~\ref{app:proof_topopt_exists}, follows the direct method of Calculus of Variations. Notice that a proof for the existence of solutions of the standard minimum compliance problem with density filtering is reported in~\cite{bourdin2001filters}. Our proof exploits similar techniques, but, since in~\cite{bourdin2001filters} the filtering operator is not normalized (we have simply $\hat{\psi}(\vec{x}) = (K * \psi) (\vec{x})$), the density has to be extended to the whole $\mathbb{R}^d$ in order to avoid the smoothing effect around the boundary of the domain. Our approach overcomes this inconvenience.

\section{Discrete optimization problems and algorithms} \label{sec:algorithm}

The target of this section is the numerical solution of problem~\eqref{eqn:pb_ct_generalized_filtered}. To reach this goal several steps are required: we start considering the simplest version of the problem, namely the unfiltered formulation~\eqref{eqn:pb_ct_base}, and we derive a finite element formulation and an optimization algorithm for this version (Section~\ref{sec:algorithm_base_nf}). Then, in Section~\ref{sec:algorithm_generalized_nf} we extend the algorithm to the generalized problem~\eqref{eqn:pb_ct_generalized}, and finally, in Section~\ref{sec:algorithm_generalized_filt}, to the filtered problem~\eqref{eqn:pb_ct_generalized_filtered}.

We remark that intermediate formulations are of some interest on their own. Indeed, the basic problem~\eqref{eqn:pb_ct_base} can be useful for some applications, such as discrete orientation optimization (see Section~\ref{sec:results:basepb}); Moreover, non-filtered formulations provide the basis for the application of different filtering techniques to the problems at hand.



\subsection{Basic problem}
\label{sec:algorithm_base_nf}

First, we consider a finite element discretization of the state equation~\eqref{eqn:stateeqn_weak}. The state variable $\vec{u}$ is discretized through continuous finite elements (P1 for instance), while for the design variables $z_\idxmat$, $\matm_\idxmat$ and $\theta_\idxmat$ piecewise constant elements (P0) are employed.

Consider a triangulation $\mesh = \{e_l\}_{l=1,...,N_e}$ in $N_e$ elements of the reference domain $\Omega$. We introduce a finite elements subspace of the space $H^{1}_{\Gamma_D}(\Omega; \mathbb{R}^d)$, which we denote by $\fespace$, defined over the mesh $\mesh$, and we denote by $\{\vec{\phi}_j\}_{j=1,\dots,N_V}$ a basis of the space $\fespace$. The finite elements discretization of the state variable
\begin{equation*}
\vec{u}_h(\vec{x})= \sum_{j=1}^{N_V} u_j \vec{\phi}_j(\vec{x})
\end{equation*}
is associated to the vector $\vec{U} = (u_1,\dots,u_{N_V}) \in \mathbb{R}^{N_V}$.

On the other hand, the design variables are discretized by the piecewise constant functions:
\begin{equation*}
z_\idxmat^h(\vec{x}) = \sum_{l=1}^{N_e} z_{\idxmat,l} \indicator{e_l}{\vec{x}}  \qquad \idxmat=1,\dots,N .
\end{equation*}
First, we consider problem \eqref{eqn:pb_ct_base}. Introducing the stiffness matrix $\vec{K} \in \mathbb{R}^{N_V \times N_V}$ and the right-hand side $\vec{f} \in \mathbb{R}^{N_V}$ defined as
\begin{equation*}
\begin{split}
K_{ij} &= \sum_{l=1}^{N_e } \sum_{\idxmat=1}^{N} (z_{\idxmat,l})^p  \int_{e_l} \tensel_\idxmat \symgrad{\vec{\phi}_i} : \symgrad{\vec{\phi}_j} \dx\, , \\
f_i &= l(\vec{\phi}_i) = \langle\vec{b},\vec{\phi}_i\rangle_{H^{-1},H^{1}_{\Gamma_D}} + \langle\vec{t},\vec{\phi}_i\rangle_{H^{-1/2},H^{1/2}} \, ,
\end{split}
\end{equation*}
the Galerkin approximation of the state equation simply reads
\begin{equation}
\label{eqn:discrete_FEM_system}
\vec{K} \vec{U} = \vec{f} \, .
\end{equation}
The total mass associated with the design $(z_{\idxmat,l})_{\idxmat,l}$ is given by the formula:
\begin{equation}
\label{eqn:discrete_mass}
\begin{split}
\mass 
&= \sum_{\idxmat=1}^{N} \int_{\Omega} z_\idxmat^h(\vec{x}) \rho_\idxmat \dx 
= \sum_{\idxmat=1}^{N} \sum_{l=1}^{N_e } |e_l| z_{\idxmat,l} \rho_\idxmat \, .
\end{split}
\end{equation}
To sum up, the finite element based discrete version of~\eqref{eqn:pb_ct_base} reads as follows:
\begin{subequations} \label{eqn:pb_fem_base}
	\begin{empheq}[left=\empheqlbrace]{align}
	&     \textstyle 
	\underset{(z_{\idxmat,l})_{\idxmat,l}}{\text{minimize}}
	& & \textstyle  
	\vec{f}^T\vec{U}
	\\ &     \textstyle 
	\text{subject to:} 
	& & \textstyle \label{eqn:pb_fem_base:stateeqn}
	\vec{K} \vec{U} = \vec{f} 
	\\ & & & \textstyle 
	\mass \leq \masslim 
	\\ & & & \textstyle 
	z_{min} \leq z_{\idxmat,l} \leq 1 \qquad \text{for } l=1,\dots,N_e \text{, } \idxmat=1,\dots,N
	\\ & & & \textstyle \label{eqn:pb_fem_base:constraint_sum_z}
	\sum_{\idxmat=1}^{N} z_{\idxmat,l} \leq 1 \qquad \text{for } l=1,\dots,N_e 
	\\ &     \textstyle 
	\text{where:} 
	& & \textstyle 
	K_{ij} = \sum_{l=1}^{N_e } \sum_{\idxmat=1}^{N} (z_{\idxmat,l})^p  \int_{e_l}\tensel_\idxmat \symgrad{\vec{\phi}_i} : \symgrad{\vec{\phi}_j} \dx
	\\ & & & \textstyle 
	\mass = \sum_{\idxmat=1}^{N} \sum_{l=1}^{N_e} |e_l| z_{\idxmat,l} \rho_\idxmat \, .
	\end{empheq}
\end{subequations}
We now describe a numerical strategy to solve~\eqref{eqn:pb_fem_base}. Notice that, thanks to the constraint~\eqref{eqn:pb_fem_base:constraint_sum_z} the upper bound on $z_\idxmat$ is redundant, since the variables $z_\idxmat$ are positive (we are supposing $z_{min} > 0$). Therefore we can omit the upper-bound constraint on $z_\idxmat$ when writing the Lagrangian.
We introduce the Lagrange multipliers $\bar{\vec{U}}$,  $\Lambda$, $\vec{\lambda}_\idxmat$ and $\vec{\mu}$, associated with the state equation~\eqref{eqn:pb_fem_base:stateeqn}, the mass constraint, the lower-bound of the design variables and the constraint~\eqref{eqn:pb_fem_base:constraint_sum_z} respectively. Therefore the Lagrangian reads:
\begin{equation}
\label{eqn:FEM_nf_lagrangiana}
\begin{split}
\mathcal{L}
&= \vec{f}^T\vec{U} 
- \bar{\vec{U}}^T\left( \vec{K}\vec{U} - \vec{f}\right) 
+ \Lambda \left( \sum_{\idxmat=1}^{N} \sum_{l=1}^{N_e } |e_l| z_{\idxmat,l} \rho_\idxmat   - \masslim \right) \\ 
& + \sum_{\idxmat=1}^{N} \sum_{l=1}^{N_e} \lambda_{\idxmat,l} \left(z_{min} -  z_{\idxmat,l} \right)
+ \sum_{l=1}^{N_e} \mu_l \left(\sum_{\idxmat=1}^{N} z_{\idxmat,l} - 1 \right) \, .
\end{split}
\end{equation}
Like in the monomaterial case~\cite{bendsoe:book}, by imposing null first variation of the Lagrangian with respect to the state variable it turns out that the problem is self-adjoint, therefore $\bar{\vec{U}} = {\vec{U}}$. On the other hand, the first variation with respect to each design variable $z_{\idxmat,l}$ reads as follows:
\begin{equation}
\label{eqn:topopt_multimat_dLdzh}
\frac{\partial\mathcal{L}}{\partial z_{\idxmat,l}} =
-\vec{U}^T\frac{\partial \vec{K}}{\partial z_{\idxmat,l}} \vec{U}
+\Lambda |e_l| \rho_\idxmat 
-\lambda_{\idxmat,l}
+ \mu_l 
\, ,
\end{equation}
where the derivative of the stiffness tensor can be computed as follows:
\begin{equation}
\label{eqn:discrete_dE_dz}
\left(\frac{\partial \vec{K}}{\partial z_{\idxmat,l}}\right)_{i,j} = p \, z_{\idxmat,l}^{p-1}   \int_{e_l} \tensel_\idxmat \symgrad{\vec{\phi}_i} : \symgrad{\vec{\phi}_j}  \dx\, .
\end{equation}
Therefore the candidate minima should be looked for in the set of designs fulfilling the Karush-Kuhn-Tucker conditions:
\begin{equation}
\label{eqn:topopt_multimat_optcond_syst1}
\begin{cases}
\vec{U}^T\frac{\partial \vec{K}}{\partial z_{\idxmat,l}} \vec{U} = \Lambda |e_l| \rho_\idxmat -\lambda_{\idxmat,l}+ \mu_l 
\qquad & \forall\, l , \idxmat \\
\Lambda \geq 0, \quad \mass \leq \masslim,\quad \Lambda \left( \masslim - \mass \right) = 0 \\
\lambda_{\idxmat,l} \geq 0, \quad z_{min}\leq z_{\idxmat,l},\quad\lambda_{\idxmat,l}(z_{min}-z_{\idxmat,l}) = 0 
\qquad & \forall\, l ,\idxmat\\
\mu_l \geq 0, \quad \sum_{\idxmat=1}^{N} z_{\idxmat,l} \leq 1,\quad\mu_l\left(\sum_{\idxmat=1}^{N} z_{\idxmat,l} - 1\right) = 0 
\qquad & \forall\, l \, .
\end{cases}
\end{equation}
With a similar approach to that employed to derive the OC method (see~\cite{bendsoe:book}), we can rewrite~\eqref{eqn:topopt_multimat_optcond_syst1} in the following equivalent way, where $\epsilon$ is a positive constant:
\begin{equation}
\label{eqn:topopt_multimat_optcond_syst2}
\begin{cases}
\frac{\vec{U}^T\frac{\partial \vec{K}}{\partial z_{\idxmat,l}} \vec{U} + \epsilon}{\Lambda |e_l| \rho_\idxmat + \mu_l+ \epsilon} 
\begin{cases}
{}=1 \qquad & \text{if $z_{\idxmat,l} > z_{min}$}\\
{}\leq 1 \qquad & \text{if $z_{\idxmat,l} = z_{min}$}\\
\end{cases}
\qquad & \forall\, l ,\idxmat\\
\Lambda \geq 0, \quad \mass \leq \masslim,\quad \Lambda \left( \masslim - \mass \right) = 0 \\
\mu_l \geq 0, \quad \sum_{\idxmat=1}^{N} z_{\idxmat,l} \leq 1,\quad\mu_l\left(\sum_{\idxmat=1}^{N} z_{\idxmat,l} - 1\right) = 0 
\qquad & \forall\, l \, .
\end{cases}
\end{equation}
Guided by~\eqref{eqn:topopt_multimat_optcond_syst2}, we devise the following fixed-point iterative scheme:
\begin{equation}
\label{eqn:discrete_fixedpoint_base}
\begin{split}
z_{\idxmat,l}^{k+1} &=\max\left\{z_{min}, B_{\idxmat,l}^k z_{\idxmat,l}^{k} \right\} \, ,\\
B_{\idxmat,l}^k &= \frac{\left(\vec{U}^k\right)^T\frac{\partial \vec{K}}{\partial z_{\idxmat,l}} \vec{U}^k +\epsilon}{\Lambda^k |e_l| \rho_\idxmat + \mu^k_l
	+\epsilon}
\, ,
\end{split}
\end{equation}
with $\vec{U}^k$ denoting the state variable at the iteration $k$, and the Lagrange multipliers $\Lambda^k \in \mathbb{R}$ and $\vec{\mu}^k \in \mathbb{R}^{N_e}$ are chosen in such a way that it holds true:
\begin{subequations}
	\begin{alignat}{1}
	\label{eqn:FEM_nf_cond1}
	&\mu^k_l \geq 0, \quad \sum_{\idxmat=1}^{N} z_{\idxmat,l}^{k+1} \leq 1,\quad\mu^k_l\left(\sum_{\idxmat=1}^{N} z_{\idxmat,l}^{k+1} - 1\right) = 0 
	\qquad \forall\, l \\
	\label{eqn:FEM_nf_cond2}
	&\Lambda^k \geq 0, \quad \mass^{k+1} \leq \masslim,\quad \Lambda^k \left( \masslim - \mass^{k+1} \right) = 0\, .
	\end{alignat}
\end{subequations}
One may wonder whether the scheme~\eqref{eqn:discrete_fixedpoint_base} is well defined or not, namely whether:
\begin{enumerate}[label=Q\arabic*]
	\item it is always possible to find a couple $\left(\Lambda^k, \vec{\mu}^k\right)$ such that the conditions~\eqref{eqn:FEM_nf_cond1} and~\eqref{eqn:FEM_nf_cond2} hold true; \label{quest:existence}
	\item the couple $\left(\Lambda^k, \vec{\mu}^k\right)$ fulfilling~\eqref{eqn:FEM_nf_cond1} and~\eqref{eqn:FEM_nf_cond2}, if it exists, is unique. \label{quest:uniqueness}
\end{enumerate}
We postpone the answer to Section~\ref{sec:welldefinition}, where it will be given in the most general case, while in the following we generalize the current scheme to problem~\eqref{eqn:pb_ct_generalized}, and we introduce the filtered version of the algorithm.

\subsection{Generalized problem} \label{sec:algorithm_generalized_nf}

Proceeding as in the previous section, it is immediate to see that the discrete version of problem~\eqref{eqn:pb_ct_generalized} reads as
\begin{subequations} \label{eqn:pb_fem_generalized}
	\begin{empheq}[left=\empheqlbrace]{align}
	&     \textstyle 
	\underset{(z_{\idxmat,l},\matm_{\idxmat,l},\theta_{\idxmat,l})_{\idxmat,l}}{\text{minimize}}
	& & \textstyle  
	\vec{f}^T\vec{U}
	\\ &     \textstyle 
	\text{subject to:} 
	& & \textstyle 
	\vec{K} \vec{U} = \vec{f} 
	\\ & & & \textstyle \label{eqn:pb_fem_generalized:masslimit}
	\mass \leq \masslim 
	\\ & & & \textstyle \label{eqn:pb_fem_generalized:zbounds}
	z_{min} \leq z_{\idxmat,l} \leq 1 \qquad \text{for } l=1,\dots,N_e \text{, } \idxmat=1,\dots,N
	\\ & & & \textstyle \label{eqn:pb_fem_generalized:zsum1}
	\sum_{\idxmat=1}^{N} z_{\idxmat,l} \leq 1 \qquad \text{for } l=1,\dots,N_e 
	\\ & & & \textstyle \label{eqn:pb_fem_generalized:mbounds}
	\underline{\matm}_\idxmat \leq \matm_{\idxmat,l} \leq \overline{\matm}_\idxmat \qquad \text{for } l=1,\dots,N_e \text{, } \idxmat=1,\dots,N
	\\ &     \textstyle 
	\text{where:} 
	& & \textstyle \label{eqn:pb_fem_generalized:Kdef}
	K_{ij} = \sum_{l=1}^{N_e } \sum_{\idxmat=1}^{N} (z_{\idxmat,l})^p   \int_{e_l}\Qrot(\theta_{\idxmat,l}) \tensel_\idxmat(\matm_{\idxmat,l}) \symgrad{\vec{\phi}_i} : \symgrad{\vec{\phi}_j} \dx
	\\ & & & \textstyle \label{eqn:pb_fem_generalized:Mdef}
	\mass = \sum_{\idxmat=1}^{N} \sum_{l=1}^{N_e} |e_l| z_{\idxmat,l} \rho_\idxmat(\matm_{\idxmat,l}) \, .
	\end{empheq}
\end{subequations}
The Lagrangian associated to the problem reads as follows:
\begin{equation*}
\begin{split}
\mathcal{L}
&= \vec{f}^T\vec{U} 
- \bar{\vec{U}}^T\left( \vec{K}\vec{U} - \vec{f})\right) 
+ \Lambda \left( \sum_{\idxmat=1}^{N} \sum_{l=1}^{N_e } |e_l| z_{\idxmat,l} \rho_\idxmat(\matm_{\idxmat,l})   - \masslim \right) \\ 
& + \sum_{\idxmat=1}^{N} \sum_{l=1}^{N_e} \lambda_{\idxmat,l} \left(z_{min} -  z_{\idxmat,l} \right)
+ \sum_{l=1}^{N_e} \mu_l \left(\sum_{\idxmat=1}^{N} z_{\idxmat,l} - 1 \right)  \\
& + \sum_{\idxmat=1}^{N} \sum_{l=1}^{N_e} \left[ 
\gamma_{\idxmat,l}^+ \left(\matm_{\idxmat,l} - \overline{\matm}_\idxmat \right) + 
\gamma_{\idxmat,l}^- \left(\underline{\matm}_\idxmat - \matm_{\idxmat,l} \right)
\right]\, ,
\end{split}
\end{equation*}
where we have introduced two further multipliers $\vec{\gamma}_\idxmat^+$ and $\vec{\gamma}_\idxmat^-$, for the upper and lower bounds of $\matm_\idxmat$. Thanks to the symmetry of the matrix $\vec{K}$, the problem turns out to be self-adjoint. By deriving the Lagrangian with respect to each design variable we get the following set of optimality conditions:
\begin{equation}
\label{eqn:FEM_nf_optcond_syst2}
\left\{ 
\begin{aligned}
&\frac{\vec{U}^T\frac{\partial \vec{K}}{\partial z_{\idxmat,l}} \vec{U} + \epsilon }{\Lambda |e_l| \rho_\idxmat(\matm_{\idxmat,l}) + \mu_l + \epsilon} 
\begin{cases}
{}=1 \qquad & \text{if $z_{\idxmat,l} > z_{min}$}\\
{}\leq 1 \qquad & \text{if $z_{\idxmat,l} = z_{min}$}\\
\end{cases}
\qquad & &\forall\, l ,\idxmat\\
&\frac{\vec{U}^T\frac{\partial \vec{K}}{\partial \matm_{\idxmat,l}}\vec{U} + \epsilon}{\Lambda |e_l| z_{\idxmat,l} \rhomprime(\matm_{\idxmat,l}) + \epsilon} 
\begin{cases}
{}\geq 1 \qquad & \text{if $\matm_{\idxmat,l} = \overline{\matm}_\idxmat $}\\
{}=1 \qquad & \text{if $\underline{\matm}_\idxmat < \matm_{\idxmat,l} < \overline{\matm}_\idxmat$}\\
{}\leq 1 \qquad & \text{if $\matm_{\idxmat,l} = \underline{\matm}_\idxmat$}\\
\end{cases}
\qquad & &\forall\, l ,\idxmat\\
&\vec{U}^T\frac{\partial \vec{K}}{\partial \theta_{\idxmat,l}}\vec{U} = 0
\qquad & &\forall\, l ,\idxmat\\
&\mu_l \geq 0, \quad \sum_{\idxmat=1}^{N} z_{\idxmat,l} \leq 1,\quad\mu_l\left(\sum_{\idxmat=1}^{N} z_{\idxmat,l} - 1\right) = 0 
\qquad & &\forall\, l \\
&\Lambda \geq 0, \quad \mass \leq \masslim,\quad \Lambda \left( \mass - \masslim \right) = 0 \, ,& &
\end{aligned}
\right.
\end{equation} 
where
\begin{equation}
\label{eqn:FEM_nf_dE}
\begin{split}
\left(\frac{\partial \vec{K}}{\partial z_{\idxmat,l}}\right)_{i,j} & =  p \, (z_{\idxmat,l})^{p-1}   \int_{e_l} \Qrot(\theta_{\idxmat,l}) \tensel_\idxmat(\matm_{\idxmat,l})\symgrad{\vec{\phi}_i} : \symgrad{\vec{\phi}_j} \dx \, , \\
\left(\frac{\partial \vec{K}}{\partial \matm_{\idxmat,l}}\right)_{i,j} & = (z_{\idxmat,l})^p   \int_{e_l} \Qrot(\theta_{\idxmat,l}) \frac{\partial \tensel_\idxmat}{\partial \matm} (\matm_{\idxmat,l})\symgrad{\vec{\phi}_i} : \symgrad{\vec{\phi}_j} \dx \, ,\\
\left(\frac{\partial \vec{K}}{\partial \theta_{\idxmat,l}}\right)_{i,j} & =  (z_{\idxmat,l})^p \int_{e_l}\frac{\partial\Qrot}{\partial\theta}(\theta_{\idxmat,l}) \tensel_\idxmat(\matm_{\idxmat,l})  \symgrad{\vec{\phi}_i} : \symgrad{\vec{\phi}_j} \dx \, .
\end{split}
\end{equation}
Finally we devise the following fixed-point scheme to update the variables $z_{\idxmat,l}$ and $\matm_{\idxmat,l}$, where $\epsilon$ and $\delta$ are small positive constants:
\begin{subequations} \label{eqn:FEM_nf_fixedpoint}
\begin{align}
\label{eqn:FEM_nf_fixedpoint:z}
z_{\idxmat,l}^{k+1} &=\max\left\{z_{min}, B_{\idxmat,l}^k z_{\idxmat,l}^{k} \right\}\, ,
\\
\label{eqn:FEM_nf_fixedpoint:m}
\matm_{\idxmat,l}^{k+1} & = \max\{ \underline{\matm}_\idxmat , \min\{ \overline{\matm}_\idxmat , \underline{\matm}_\idxmat - \delta + D_{\idxmat,l}^k (\matm_{\idxmat,l}^k - \underline{\matm}_\idxmat + \delta ) \} \}\, ,
\end{align}
\end{subequations}
where
\begin{equation}
\label{eqn:FEM_nf_B_D}
\begin{split}
B_{\idxmat,l}^k &= \frac{\left(\vec{U}^k\right)^T\frac{\partial \vec{K}}{\partial z_{\idxmat,l}} \vec{U}^k +\epsilon}{\Lambda^k |e_l| \rho_\idxmat(\matm_{\idxmat,l}^k) + \mu^k_l +\epsilon}  \, ,\\
D_{\idxmat,l}^k &= \frac{\left(\vec{U}^k\right)^T\frac{\partial \vec{K}}{\partial \matm_{\idxmat,l}} \vec{U}^k+\epsilon}{\Lambda^k|e_l| z_{\idxmat,l}^k \rhomprime (\matm_{\idxmat,l}^k) +\epsilon} 
\, ,
\end{split}
\end{equation}
with $\vec{U}^k$ denoting the state variable at the iteration $k$, and the Lagrange multipliers $\Lambda^k$ and $\vec{\mu}^k$ are chosen in such a way that conditions~\eqref{eqn:FEM_nf_cond1} and~\eqref{eqn:FEM_nf_cond2} hold true. We remark that analogous questions to \ref{quest:existence}-\ref{quest:uniqueness} are natural in this case too. We will deal with them in Section~\ref{sec:welldefinition}.

To update the variables $\theta_{\idxmat,l}$ a steepest descent with line-search algorithm can be applied, knowing that the gradient of the objective functional $\compliance$ is given by:
\begin{equation}
\frac{\partial \compliance}{\partial \theta_{\idxmat,l}} = - \vec{U}^T\frac{\partial \vec{K}}{\partial \theta_{\idxmat,l}}\vec{U} \, .
\end{equation}

\subsection{Filtered problems} \label{sec:algorithm_generalized_filt}

In this section we introduce the filtered version of problems~\eqref{eqn:pb_fem_base} and~\eqref{eqn:pb_fem_generalized}. Consider the following discrete counterpart of the filtering operator~\eqref{eqn:topopt_filtering_definition}:
\begin{equation}
\label{eqn:discrete_filtering_definition}
\hat{\psi}_l = \sum_{r=1}^{N_e} \hat{H}_{lr} \psi_r \, ,
\end{equation}
where the matrix $\hat{\vec{H}} \in \mathbb{R}^{N_e \times N_e}$ is given by:
\begin{equation*}
\hat{H}_{lr} 
=\frac{K(\vec{x}_l-\vec{x}_r) |e_r|}{\sum_{s=1}^{N_e} K(\vec{x}_l-\vec{x}_s) |e_s|}\dx \, .
\end{equation*}
The filtered version of the discrete problems~\eqref{eqn:pb_fem_base} and~\eqref{eqn:pb_fem_generalized} is obtained by replacing the original variables with the filtered ones in the definition of $\vec{K}$ and $M$, that is to say by replacing~\eqref{eqn:pb_fem_generalized:Kdef}--\eqref{eqn:pb_fem_generalized:Mdef} with:
\begin{subequations}
\begin{align}
\label{eqn:pb_fem_generalized_filt:Kdef}
K_{ij} &= \sum_{l=1}^{N_e } \sum_{\idxmat=1}^{N} (\hat{z}_{\idxmat,l})^p   \int_{e_l} \Qrot(\hat{\theta}_{\idxmat,l}) \tensel_\idxmat(\hat{\matm}_{\idxmat,l})\symgrad{\vec{\phi}_i} : \symgrad{\vec{\phi}_j} \dx \, , \\
\label{eqn:pb_fem_generalized_filt:Mdef}
\mass &= \sum_{\idxmat=1}^{N} \sum_{l=1}^{N_e} |e_l| \hat{z}_{\idxmat,l} \rho_\idxmat(\hat{\matm}_{\idxmat,l}) \, .
\end{align}
\end{subequations}
To derive an algorithm for the solution of the discrete filtered problems, it is possible to start from the schemes proposed in Sections~\ref{sec:algorithm_base_nf} and~\ref{sec:algorithm_generalized_nf} (which provide the sensitivities with respect to the filtered variables), and exploit the chain rule to compute the sensitivities with respect to the design variables. Omitting the passages, we report the final result for the general case. The update of the variables $z_{\idxmat,l}$ and $\matm_{\idxmat,l}$ is performed by the same fixed-point scheme~\eqref{eqn:FEM_nf_fixedpoint}, provided that the definitions of~\eqref{eqn:FEM_nf_B_D} are replaced by:
\begin{equation} \label{eqn:BD_filtered}
\begin{split}
B_{\idxmat,l}^k &= \frac{ \left(\vec{U}^k\right)^T\left(\sum_{r=1}^{N_e} \hat{H}_{rl}\frac{\partial \vec{K}}{\partial \hat{z}_{\idxmat,r}}\right) \vec{U}^k +\epsilon}{\Lambda^k \sum_{r=1}^{N_e} \hat{H}_{rl} |e_r| \rho_\idxmat(\hat{\matm}_{\idxmat,r}^k) + \mu^k_l +\epsilon}  \, ,\\
D_{\idxmat,l}^k &= \frac{ \left(\vec{U}^k\right)^T\left(\sum_{r=1}^{N_e} \hat{H}_{rl}\frac{\partial \vec{K}}{\partial \hat{\matm}_{\idxmat,r}}\right) \vec{U}^k +\epsilon}{\Lambda^k  \sum_{r=1}^{N_e} \hat{H}_{rl} |e_r| \rhomprime(\hat{\matm}_{\idxmat,r}^k) \hat{z}_{\idxmat,r}^k  +\epsilon} 	\, .
\end{split}
\end{equation}
Notice that, in this context, the derivatives of the stiffness tensor with respect to the filtered variables are obtained by replacing in Eq.~\eqref{eqn:FEM_nf_dE} the original variables with the filtered ones.
Also in this case one may wonder about existence and uniqueness of the multipliers $\Lambda^k$ and $\vec{\mu}^k$ (see \ref{quest:existence}-\ref{quest:uniqueness}). This issue will be treated in Section~\ref{sec:welldefinition}.

Since the variables associated with the orientation of the materials are periodic, a circular counterpart of the filtering operator~\eqref{eqn:topopt_filtering_definition} should be employed. Denoting by $T_\idxmat$ the period of the variable $\theta_\idxmat$ (for instance we have $T_\idxmat = 2 \pi$ for materials without any symmetry properties and $T_\idxmat = \pi$ for orthotropic materials), we consider the following filtering operator:
\begin{equation}
\hat{\theta}_{\idxmat,l}
= \frac{T_\idxmat}{2\pi} \operatorname{atan2}\left(
\sum_{r=1}^{N_e} \hat{H}_{lr}\sin\left(\frac{2\pi\theta_{\idxmat,r}}{T_\idxmat}\right) ,
\sum_{r=1}^{N_e} \hat{H}_{lr}\cos\left(\frac{2\pi\theta_{\idxmat,r}}{T_\idxmat}\right)
\right) \, ,
\end{equation}
where $\operatorname{atan2}$ denotes the variant of the arctangent function which accounts for the positioning of the angle in the appropriate quadrant:
\begin{equation*}
\operatorname{atan2}(y,x) =
\begin{cases}
\arctan(\frac y x) &\text{if } x > 0 \\
\arctan(\frac y x) + \pi &\text{if } x < 0 \text{ and } y \geq 0 \\
\arctan(\frac y x) - \pi &\text{if } x < 0 \text{ and } y < 0 \\
+\frac{\pi}{2} &\text{if } x = 0 \text{ and } y > 0 \\
-\frac{\pi}{2} &\text{if } x = 0 \text{ and } y < 0 \\
0 &\text{if } x = 0 \text{ and } y = 0 \, .
\end{cases}
\end{equation*}
Finally, by the chain rule the sensitivity of the compliance $\compliance$ with respect to each design variable $\theta_{\idxmat,l}$ is given by:
\begin{equation}
\begin{split}
\frac{\partial \compliance}{\partial \theta_{\idxmat,l}} 
=&- \cos\left(\frac{2\pi\theta_{\idxmat,l}}{T_\idxmat}\right) \sum_{s=1}^{N_e} \hat{H}_{sl} \frac{
	\left(\sum_r \hat{H}_{sr}\cos\left(\frac{2\pi\theta_{\idxmat,r}}{T_\idxmat}\right)\right)
	\vec{U}^T\frac{\partial \vec{K}}{\partial \theta_{\idxmat,s}}\vec{U}}
{\left(\sum_r \hat{H}_{sr}\sin\left(\frac{2\pi\theta_{\idxmat,r}}{T_\idxmat}\right)\right)^2 + 
	\left(\sum_r \hat{H}_{sr}\cos\left(\frac{2\pi\theta_{\idxmat,r}}{T_\idxmat}\right)\right)^2 } \\
& - \sin\left(\frac{2\pi\theta_{\idxmat,l}}{T_\idxmat}\right) \sum_{s=1}^{N_e} \hat{H}_{sl} \frac{
	\left(\sum_r \hat{H}_{sr}\sin\left(\frac{2\pi\theta_{\idxmat,r}}{T_\idxmat}\right)\right)
	\vec{U}^T\frac{\partial \vec{K}}{\partial \theta_{\idxmat,s}}\vec{U}}
{\left(\sum_r \hat{H}_{sr}\sin\left(\frac{2\pi\theta_{\idxmat,r}}{T_\idxmat}\right)\right)^2 + 
	\left(\sum_r \hat{H}_{sr}\cos\left(\frac{2\pi\theta_{\idxmat,r}}{T_\idxmat}\right)\right)^2 } \, .
\end{split}
\end{equation}

\subsection{Properties of the fixed-point algorithms} \label{sec:welldefinition}

In this section we show that the the fixed-point scheme introduced in the previous sections is compliant with the optimality conditions~\eqref{eqn:FEM_nf_optcond_syst2}, in a sense that is clarified by the following proposition. Notice that an analogous result can be proved in the same manner for the filtered version of the fixed point scheme (cf.~\eqref{eqn:BD_filtered}).

\begin{proposition}
	Consider a design $(z_{\idxmat,l},\matm_{\idxmat,l},\theta_{\idxmat,l})_{\idxmat,l}$ such that $\vec{U}^T\frac{\partial \vec{K}}{\partial \theta_{\idxmat,l}}\vec{U} = 0$. Then, it fulfils the optimality conditions~\eqref{eqn:FEM_nf_optcond_syst2} if and only if it is a fixed-point for the iterative scheme~\eqref{eqn:FEM_nf_fixedpoint}.
\end{proposition}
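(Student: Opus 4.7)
The plan is to establish the equivalence clause-by-clause, matching the componentwise fixed-point conditions coming from~\eqref{eqn:FEM_nf_fixedpoint:z}--\eqref{eqn:FEM_nf_fixedpoint:m} against the corresponding blocks of the optimality system~\eqref{eqn:FEM_nf_optcond_syst2}. Two ingredients are essentially free: the orientation condition $\vec{U}^T (\partial \vec{K}/\partial \theta_{\idxmat,l}) \vec{U} = 0$ is identical to the standing hypothesis of the proposition, and the sign, primal-feasibility and complementary-slackness conditions on $\Lambda$ and $\mu_l$ are, by the very construction of any iterate through~\eqref{eqn:FEM_nf_cond1}--\eqref{eqn:FEM_nf_cond2}, automatically enforced at every step. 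They therefore match the corresponding lines of~\eqref{eqn:FEM_nf_optcond_syst2} on both sides of the equivalence without further work. What remains is to analyze the density and material-parameter updates.

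For the density block, I would start from the fixed-point equation $z_{\idxmat,l} = \max\{z_{min}, B_{\idxmat,l} z_{\idxmat,l}\}$ and split on whether $z_{\idxmat,l} > z_{min}$ or $z_{\idxmat,l} = z_{min}$. In the first case the maximum is attained at $B_{\idxmat,l} z_{\idxmat,l}$, and dividing by the strictly positive $z_{\idxmat,l}$ gives $B_{\idxmat,l} = 1$. In the second case $B_{\idxmat,l}\, z_{min} \leq z_{min}$, hence $B_{\idxmat,l} \leq 1$. Substituting the definition~\eqref{eqn:FEM_nf_B_D} recovers exactly the first block of~\eqref{eqn:FEM_nf_optcond_syst2}, and the converse implication is obtained by running the same dichotomy backwards.

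For the material-parameter block, the crucial observation is that the shifted quantity $\matm_{\idxmat,l} - \underline{\matm}_\idxmat + \delta$ is always strictly positive (bounded below by $\delta > 0$). I would split the fixed-point equation for $\matm_{\idxmat,l}$ into three cases: if $\matm_{\idxmat,l} \in (\underline{\matm}_\idxmat,\overline{\matm}_\idxmat)$, neither clip is active, and one is left with $\matm_{\idxmat,l} - \underline{\matm}_\idxmat + \delta = D_{\idxmat,l}(\matm_{\idxmat,l} - \underline{\matm}_\idxmat + \delta)$, forcing $D_{\idxmat,l}=1$; if $\matm_{\idxmat,l} = \overline{\matm}_\idxmat$, the inner min must saturate, giving $D_{\idxmat,l}(\overline{\matm}_\idxmat - \underline{\matm}_\idxmat + \delta) \geq \overline{\matm}_\idxmat - \underline{\matm}_\idxmat + \delta$, so $D_{\idxmat,l} \geq 1$; if $\matm_{\idxmat,l} = \underline{\matm}_\idxmat$, the outer max must saturate, yielding $D_{\idxmat,l}\,\delta \leq \delta$, i.e.\ $D_{\idxmat,l}\leq 1$. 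Inserting the definition of $D_{\idxmat,l}$ from~\eqref{eqn:FEM_nf_B_D} reproduces the second block of~\eqref{eqn:FEM_nf_optcond_syst2}, and the reverse direction is again immediate by the same case analysis.

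The only subtle point, and what I expect to be the main (if mild) obstacle, is the bookkeeping around the shift by $\underline{\matm}_\idxmat - \delta$ in the $\matm$-update: it is introduced precisely so that $D_{\idxmat,l}$ retains a clean multiplicative ``$=\!1/\!\leq\! 1/\!\geq\! 1$'' interpretation even when $\matm_{\idxmat,l}$ collapses to $\underline{\matm}_\idxmat$ (without the shift one would only obtain the uninformative identity $0 = D_{\idxmat,l}\cdot 0$). One must therefore verify that the three sign cases above truly exhaust all possibilities and produce no spurious solutions. The additive $\epsilon$-regularization in the definitions~\eqref{eqn:FEM_nf_B_D} plays no role in the equivalence, since it shifts numerator and denominator by the same positive constant and thus does not move the ratios across unity.
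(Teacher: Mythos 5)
Your proposal is correct and follows essentially the same route as the paper: the paper's own proof consists of the observation that plugging the design and its KKT multipliers into the update leaves everything unchanged (``it is easily seen''), and your clause-by-clause case analysis of the $\max$ and $\max$--$\min$ clips is exactly the content of that ``easily seen,'' worked out correctly, including the role of the shift by $\underline{\matm}_\idxmat-\delta$. The one ingredient the paper makes explicit that you gloss over is the forward direction's reliance on Proposition~\ref{prop:topopt_multimal_mL_exist_unique}: a KKT point comes equipped with \emph{some} multipliers $(\Lambda,\vec{\mu})$ satisfying \eqref{eqn:FEM_nf_cond1}--\eqref{eqn:FEM_nf_cond2}, but the scheme selects its own $(\Lambda^k,\vec{\mu}^k)$ by those same conditions, so to conclude that the scheme's update is the identity you must know the two sets of multipliers coincide --- which is precisely the uniqueness statement of that proposition. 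Your phrase that the multiplier conditions ``match on both sides without further work'' is fine for the backward implication but hides this step in the forward one.
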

\begin{proof}
	Suppose that the design $(z_{\idxmat,l},\matm_{\idxmat,l},\theta_{\idxmat,l})_{\idxmat,l}$ fulfils~\eqref{eqn:FEM_nf_optcond_syst2}. Then it is easily seen that plugging $z_{\idxmat,l}^{k} = z_{\idxmat,l}^{k+1} = z_\idxmat$, $\matm_{\idxmat,l}^{k} = \matm_{\idxmat,l}^{k+1} = \matm_\idxmat$, $\Lambda^k=\Lambda$ and $\vec{\mu}^k=\vec{\mu}$ into~\eqref{eqn:FEM_nf_fixedpoint}\eqref{eqn:FEM_nf_cond1}\eqref{eqn:FEM_nf_cond1} all conditions are satisfied. By uniqueness of $\Lambda^k$ and $\vec{\mu}^k$ (see Proposition~\ref{prop:topopt_multimal_mL_exist_unique}), the design is a fixed point for the update scheme~\eqref{eqn:FEM_nf_fixedpoint}.
	
	On the other hand, suppose that a design $(z_{\idxmat,l}^k,\matm_{\idxmat,l}^k,\theta_{\idxmat,l}^k)_{\idxmat,l}$ is a fixed point for the scheme~\eqref{eqn:FEM_nf_fixedpoint}. Then it is easy to see that $z_{\idxmat,l}^k$, $\matm_{\idxmat,l}^k$, $\Lambda^k$, $\vec{\mu}^k$ fulfil the optimality conditions~\eqref{eqn:FEM_nf_optcond_syst2}.
\end{proof}

We now give an answer to the questions \ref{quest:existence} and \ref{quest:uniqueness}. The result is presented for the most general case, namely for the filtered version of the generalized problem, but analogous results for the unfiltered problem \eqref{eqn:FEM_nf_fixedpoint} and for the base problem \eqref{eqn:discrete_fixedpoint_base} can be obtained as particular cases. The proof of the proposition is reported in appendix~\ref{app:proof_topopt_multimal_mL_exist_unique}.

\begin{proposition}
	\label{prop:topopt_multimal_mL_exist_unique}
	Consider a provisional design $(z_{\idxmat,l}^k, \matm_{\idxmat,l}^k,\theta_{\idxmat,l}^k)$ fulfilling the constraints~\eqref{eqn:pb_fem_generalized:masslimit}--\eqref{eqn:pb_fem_generalized:mbounds}. Consider the update scheme~\eqref{eqn:FEM_nf_fixedpoint}, dependent on the parameters $\Lambda^k$ and $\vec{\mu}^k$ (which we denote simply by $\Lambda$ and $\vec{\mu}$), where $B_{\idxmat,l}^k$ and $D_{\idxmat,l}^k$ are given by~\eqref{eqn:BD_filtered}, and $\epsilon$ is a positive constant. Suppose that~\eqref{eqn:topopt_multimat_nontrivialparam} holds true, and that the specific weight is non decreasing in each $\matm_\idxmat$:
	\begin{equation} \label{eqn:densitynondecreasing}
	\rhomprime(\matm) \geq 0 \qquad \forall \underline{\matm}_\idxmat \leq \matm \leq \overline{\matm}_\idxmat, \text{ for } \idxmat = 1,\dots,N.
	\end{equation} 
	Then:
	\begin{enumerate}[label=(\alph*)]
		\item \label{point:topopt_multimat_proof_a} For each $\Lambda \geq 0$, in each mesh element $l=1,\dots,N_e$, the map $\mu_l \mapsto \sum_{\idxmat=1}^{N} z_{\idxmat,l}^{k+1}$ is continuous and non increasing.
		\item \label{point:topopt_multimat_proof_b} For each $\Lambda \geq 0$, there exists a unique $\vec{\mu}$, which we denote by $\vec{\mu}^{\Lambda}$, such that: 
		\begin{equation*}
		\mu_l \geq 0, \quad \sum_{\idxmat=1}^{N} z_{\idxmat,l}^{k+1} \leq 1,\quad\mu_l\left(\sum_{\idxmat=1}^{N} z_{\idxmat,l}^{k+1} - 1\right) = 0 \qquad \forall l=1,\dots,N_e\, .
		\end{equation*}
		\item \label{point:topopt_multimat_proof_c} With the choice $\vec{\mu} = \vec{\mu}^{\Lambda}$, the map $\Lambda \mapsto  \mass^{k+1}  = \sum_{\idxmat=1}^{N} \sum_{l=1}^{N_e} |e_l| \hat{z}_{\idxmat,l}^{k+1} \rho_\idxmat(\hat{\matm}_{\idxmat,l}^{k+1}) $ is continuous and non increasing.
		\item \label{point:topopt_multimat_proof_d} With the choice $\vec{\mu} = \vec{\mu}^{\Lambda}$, there exists a unique $\Lambda$ such that:
		\begin{equation*}
		\Lambda \geq 0, \quad \mass^{k+1} \leq \masslim,\quad \Lambda \left( \mass^{k+1} - \masslim \right) = 0 \, .
		\end{equation*}
	\end{enumerate}
\end{proposition}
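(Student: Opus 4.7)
The proof proceeds by direct analysis of the explicit update formulas \eqref{eqn:FEM_nf_fixedpoint} together with \eqref{eqn:BD_filtered}, in the order (a) $\to$ (b) $\to$ (c) $\to$ (d); parts (a), (b) and (d) are essentially bookkeeping, while part (c) contains the real work.

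For (a): in the denominator of $B^k_{\idxmat,l}$ (filtered version, \eqref{eqn:BD_filtered}) the parameter $\mu_l$ enters only as an additive nonnegative term, so $\mu_l \mapsto B^k_{\idxmat,l}$, and hence $\mu_l \mapsto z^{k+1}_{\idxmat,l} = \max\{z_{min}, B^k_{\idxmat,l} z^k_{\idxmat,l}\}$, is continuous and non-increasing. Summing over $\idxmat$ preserves both properties. For (b), from (a) the map $f_l(\mu_l) := \sum_{\idxmat} z^{k+1}_{\idxmat,l}(\Lambda,\mu_l)$ is continuous and non-increasing on $[0,+\infty)$; as $\mu_l \to +\infty$ each $B^k_{\idxmat,l}$ tends to $0$, the clip at $z_{min}$ becomes active and $f_l(\mu_l) \to N z_{min}$, which is strictly below $1$ by \eqref{eqn:topopt_multimat_nontrivialparam1}. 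Two exclusive cases then pin $\mu^{\Lambda}_l$ uniquely: if $f_l(0) \leq 1$ then complementarity forces $\mu^{\Lambda}_l = 0$; otherwise the intermediate value theorem produces a unique $\mu^{\Lambda}_l > 0$ with $f_l(\mu^{\Lambda}_l) = 1$, uniqueness coming from strict monotonicity of $B^k_{\idxmat,l}$ off the clipped regime.

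Part (d) will mirror (b) once (c) is in hand. As $\Lambda \to +\infty$, the formulas \eqref{eqn:FEM_nf_fixedpoint} and \eqref{eqn:BD_filtered} give $z^{k+1}_{\idxmat,l} \to z_{min}$ and, since $D^k_{\idxmat,l} \to 0$, also $\matm^{k+1}_{\idxmat,l} \to \underline{\matm}_\idxmat$. Because the filter is a normalized convex combination with non-negative weights (cf.\ Lemma~\ref{lemma:filtered_constraints}), the same limits transfer to $\hat{z}^{k+1}_{\idxmat,l}$ and $\hat{\matm}^{k+1}_{\idxmat,l}$, so $\mass^{k+1}(\Lambda) \to |\Omega|z_{min}\sum_{\idxmat} \rho_\idxmat(\underline{\matm}_\idxmat)$, which is strictly less than $\masslim$ by \eqref{eqn:topopt_multimat_nontrivialparam2} together with the monotonicity hypothesis \eqref{eqn:densitynondecreasing}. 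Combined with the continuity and non-increasingness of $\Lambda \mapsto \mass^{k+1}(\Lambda)$ from (c), a final intermediate-value argument produces the unique admissible $\Lambda$.

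The crux is therefore part (c). Continuity of $\Lambda \mapsto \mass^{k+1}(\Lambda)$ follows routinely from continuity of $\Lambda \mapsto \mu^{\Lambda}_l$ (which is implicit in the construction in (b)) together with continuity of the filter, of $\rho_\idxmat$, and of the update formulas. For monotonicity I plan to split the mass into its $\matm$- and $z$-contributions. The $\matm$-side is easy: $D^k_{\idxmat,l}$ depends on $\Lambda$ only through its denominator and is free of $\vec{\mu}$, so $\Lambda \mapsto \matm^{k+1}_{\idxmat,l}$ is non-increasing; by non-negativity of the filter weights and hypothesis \eqref{eqn:densitynondecreasing}, each $\rho_\idxmat(\hat{\matm}^{k+1}_{\idxmat,l})$ is non-increasing in $\Lambda$. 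The $z$-side is the real obstacle: on mesh elements where $\mu^{\Lambda}_l = 0$ the argument of (a) applies with $\Lambda$ replacing $\mu_l$ and each $z^{k+1}_{\idxmat,l}$ is individually non-increasing, but on elements where $\mu^{\Lambda}_l > 0$ the active identity $\sum_{\idxmat} z^{k+1}_{\idxmat,l} = 1$ allows single $z^{k+1}_{\idxmat,l}$ (those associated with lighter materials) to grow with $\Lambda$. The plan is to implicitly differentiate the active constraint to express $d\mu^{\Lambda}_l/d\Lambda$ as a positively-weighted average of the mass-sensitivity coefficients $\alpha_{\idxmat,l} = \sum_{r} \hat{H}_{rl}|e_r|\rho_\idxmat(\hat{\matm}^k_{\idxmat,r})$ appearing in the denominators of $B^k$, and then substitute back to obtain for the local $z$-contribution a Cauchy--Schwarz/variance-type identity whose sign is the right one. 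The resulting smooth analysis then has to be patched across the non-smooth transitions between clipped/unclipped $z$ and active/inactive $\mu^{\Lambda}_l$, which is handled by continuity and by a standard sign-preserving density argument.
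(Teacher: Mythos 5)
Your proposal follows essentially the same route as the paper's proof: parts (a), (b) and (d) are argued identically (monotone denominators, two-case complementarity arguments using the limits $N z_{min}<1$ and $|\Omega|\, z_{min}\sum_\idxmat \rho_\idxmat(\underline{\matm}_\idxmat)<\masslim$), and your plan for part (c) --- implicit differentiation of the active constraint $\sum_\idxmat z^{k+1}_{\idxmat,l}=1$ to write $d\mu^{\Lambda}_l/d\Lambda$ as a positively-weighted average of the coefficients $\sum_r \hat H_{rl}|e_r|\rho_\idxmat(\hat\matm^k_{\idxmat,r})$, followed by back-substitution yielding a variance-type expression of the form $\sum_{\idxmat<j}(\rho_j-\rho_\idxmat)^2\,\frac{\partial z^{k+1}_{\idxmat,l}}{\partial\mu_l}\frac{\partial z^{k+1}_{j,l}}{\partial\mu_l}$ divided by the negative quantity $\sum_\idxmat \frac{\partial z^{k+1}_{\idxmat,l}}{\partial\mu_l}$, plus a separately non-positive $\matm$-contribution --- is precisely the computation the paper carries out via Dini's theorem. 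The only difference is that you state the key identity as a plan rather than executing the algebra, but the mechanism, the decomposition, and the sign structure you describe match the published argument.
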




\begin{remark}[Enhanced fixed point algorithm]\label{rem:enahanced}
	
The efficiency of the fixed-point scheme~\eqref{eqn:FEM_nf_fixedpoint} can be enhanced, as for the monomaterial case (see for instance~\cite{bendsoe:book}), by introducing the move limits $\zeta_z > 0$ and $\zeta_\matm > 0$ and a tuning parameter $\eta \in (0,1)$:
\begin{equation}
\label{eqn:FEM_nf_fixedpoint_enhanced}
\begin{split}
z_{\idxmat,l}^{k+1} &= \begin{cases}
\max\{ z_{\idxmat,l}^k - \zeta_z, z_{min}\} \qquad & \text{if $ (B_{\idxmat,l}^k)^\eta z_{\idxmat,l}^k \leq \max\{ z_{\idxmat,l}^k - \zeta_z, z_{min}\}  $} \\
z_{\idxmat,l}^k + \zeta_z \qquad& \text{if $ (B_{\idxmat,l}^k)^\eta z_{\idxmat,l}^k \geq z_{\idxmat,l}^k + \zeta_z $} \\
(B_{\idxmat,l}^k)^\eta z_{\idxmat,l}^k \qquad& \text{else,}
\end{cases}
\\
\matm_{\idxmat,l}^{k+1} &= \begin{cases}
\max\{ \matm_{\idxmat,l}^k - \zeta_\matm, \underline{\matm}_\idxmat\} \qquad &  \text{if $\tilde{\matm}_{\idxmat,l}^{k+1} \leq \max\{ \matm_{\idxmat,l}^k - \zeta_\matm, \underline{\matm}_\idxmat\}$} \\
\min\{ \matm_{\idxmat,l}^k + \zeta_\matm, \overline{\matm}_\idxmat\} \qquad &  \text{if $\tilde{\matm}_{\idxmat,l}^{k+1} \geq \min\{ \matm_{\idxmat,l}^k + \zeta_\matm, \overline{\matm}_\idxmat\}$} \\
\tilde{\matm}_{\idxmat,l}^{k+1} \qquad &  \text{else,}
\end{cases}
\end{split}
\end{equation}
where
\begin{equation*}
\tilde{\matm}_{\idxmat,l}^{k+1} = \underline{\matm}_\idxmat - \delta + (D_{\idxmat,l}^k)^\eta (\matm_{\idxmat,l}^k - \underline{\matm}_\idxmat + \delta )\, .
\end{equation*}
For this version of the algorithm a result analogous to Proposition~\ref{prop:topopt_multimal_mL_exist_unique} can be proved in the same manner; however, in this case, the uniqueness of the multipliers $\Lambda$ and $\vec{\mu}$ is lost, but the resulting updated design variables $z_{\idxmat,l}^{k+1}$ and $\matm_{\idxmat,l}^{k+1}$ are still unique.

\end{remark}
\subsection{The algorithm}

We notice that Proposition~\ref{prop:topopt_multimal_mL_exist_unique} suggests that the value for the multipliers $\Lambda^k$ and $\vec{\mu}^k$ can be found by means of a bisection loop. Therefore we devise Algorithm~\ref{alg:topopt_multimat}: at each iteration an external bisection loop detects the value of the multiplier $\Lambda^k$, and in each point of the domain an inner iteration loop determines the local value of the multiplier $\vec{\mu}^k$. A good initial guess for the Lagrange multipliers is given by the corresponding value at the previous iteration. Algorithm~\ref{alg:topopt_multimat} is written in a rather general way, since the strategy to follow in the iteration loops to find the value of the multipliers is not specified. A good strategy comprises two phases: in a first bracketing phase an interval containing the desired value is detected, and then the exact value is computed (with the desired accuracy) by bisection of the interval. An example of bisection loop can be found in \cite{bruggi2018topology}.

\begin{algorithm}
	\caption{Multi-material Topology Optimization}
	\label{alg:topopt_multimat}
	\begin{algorithmic}
		\For{$\idxmat\gets 1,\dots, N \, , \, l \gets 1,\dots,N_e$}
		\State Initialize $z_{\idxmat,l}$ and $\matm_{\idxmat,l}$
		\EndFor		
		\While{\textit{not converged}}
		\State Solve equilibrium (FEM) \Comment{see Eq.~\eqref{eqn:discrete_FEM_system}}
		\State Compute sensitivities \Comment{see Eq.~\eqref{eqn:FEM_nf_dE}}
		\State Initial guess for $\Lambda$
		\Loop		
		\For{$\idxmat\gets 1,\dots, N$}
		\State Update $\matm_{\idxmat,l}$ \Comment{see Eq.~\eqref{eqn:FEM_nf_fixedpoint_enhanced}}
		\EndFor
		\For{$l \gets 1,\dots,N_e$}
		\State Initial guess for $\mu_l$ 
		\Loop
		\For{$\idxmat\gets 1,\dots, N$}
		\State Update $z_{\idxmat,l}$ \Comment{see Eq.~\eqref{eqn:FEM_nf_fixedpoint_enhanced}}
		\EndFor
		\State $Z_l \gets \sum_{\idxmat=1}^{N} z_{\idxmat,l}$
		\If{($Z_l < 1$ \textbf{and} $\mu_l=0$) \textbf{or} $|Z_l-1|<\epsilon_{tol}$ } \State \textbf{break loop}
		\ElsIf{$Z_l < 1$} \State Increase $\mu_l$
		\ElsIf{$Z_l > 1$} \State Decrease $\mu_l$ (not below $0$)
		\EndIf
		\EndLoop
		\EndFor
		\State Update $\mass$  \Comment{see Eq.~\eqref{eqn:pb_fem_generalized_filt:Mdef}}
		\If{($\mass < \masslim$ \textbf{and} $\Lambda=0$) \textbf{or} $|\mass-\masslim|<\epsilon_{tol}$} \State \textbf{break loop}			
		\ElsIf{$\mass > \masslim$}  \State Increase $\Lambda$
		\ElsIf{$\mass < \masslim$} \State Decrease $\Lambda$ (not below $0$)
		\EndIf						
		\EndLoop
		\EndWhile
		\State Post-processing
	\end{algorithmic}
\end{algorithm}

\section{A leading application: TopOpt of self-assembling materials} \label{sec:applicationtodiblockcopolymers}

In this section we consider one of the applications which motivate this work. In particular we show how the problem of controlling the phase separation of self-assembling diblock copolymers to build bodies with optimal elastic properties can be formulated in the form of problem~\eqref{eqn:pb_ct_generalized}. We remark that in this work we consider the 2D case, but the presented methodology can be straightfully generalized to the 3D case. 

\subsection{Diblock copolymers self-assembly} \label{sec:diblock_selfassembly}

A diblock copolymer is a linear chain molecule made of two subchains, covalentely joined to each other, consisting respectively on $N_A$ monomers of type $A$ and of $N_B$ monomers of type $B$. A density functional theory for the phase separation of diblock compolymers, first proposed by Ohta and Kawasaki~\cite{ohta1986equilibrium} (see~\cite{choksi2003derivation} for further details), foresees that copolymer melts rearrange in such a way they minimize a suitable energy functional.

Let $\Omega$ be a domain in $\mathbb{R}^d$. We consider the order parameter $\phi(\vec{x}) = \phi_A(\vec{x}) - \phi_B(\vec{x}) \in [-1,1]$, defined as the difference between the mass fractions of the two monomers, denoted respectively by $\phi_A$ and $\phi_B$ (we have $\phi(\vec{x})=-1$ in pure $B$ zones and $\phi(\vec{x})=1$ in pure $A$ zones). Moreover, we define the parameter $\CHm = (N_A - N_B) / (N_A+N_B)$, which measures the mutual proportion of the two monomers inside each chain. Notice that we have $\CHm = \fint_{\Omega} \phi(\vec{x}) \dx$. Given the previous definitions, the Ohta-Kawasaki energy functional, defined over the set $\{\phi \in H^1(\Omega) \quad \text{s.t. } \fint_{\Omega} \phi(\vec{x}) \dx = \CHm\}$ reads as follows:	
\begin{equation} \label{eqn:ohta-kawasaki}
\mathcal{H}(\phi) = \int_{\Omega} \Bigl( \frac{1}{2\gamma^2} |\nabla \phi|^2 + F(\phi)\Bigr) \dx + \frac{1}{2} \int_{\Omega\times\Omega} G(\vec{x},\vec{y}) (\phi(\vec{x})-\CHm)(\phi(\vec{y})-\CHm) \diff{\vec{x}} \diff{\vec{y}} \, ,
\end{equation}
where $G$ is the Green's function of $-\laplacian$ with periodic or Neumann homogeneous boundary conditions, $F(s)$ is a double-well potential, the most common choice being $F(s) = \frac{1}{4}(1-s^2)^2$, and $\gamma$ is a positive parameter. In the first integral, the first term accounts for surface tension effects, while the second penalizes intermediate densities favouring pure phases: the overall effect is the formation of large $A$-rich and $B$-rich subdomains. On the other hand, the second integral, which models the local interactions among subchains, favours rapid oscillations of $\phi$. As a result of the competition of the two integrals, segregation takes place on the mesoscale, where regular periodic patterns can be observed.

The phase separation process of diblock copolymers can be modelled by writing the gradient flow of the Ohta-Kawasaki functional~\eqref{eqn:ohta-kawasaki} in the space $H^{-1}$, defined as the dual space of the zero-mean ${H}^1(\Omega)$ functions, i.e. $\mathring{H}^1(\Omega) = \left\{u \in {H}^1(\Omega) \quad \text{s.t. } (u,1)_{L^2(\Omega)} = 0\right\}$ (see e.g.~\cite{choksi2009_3d}).
This leads to the so-called Cahn-Hilliard-Oono equation:
\begin{equation} \label{eqn:CHO}
\begin{cases}
\partial_t \phi = \laplacian \mu - (\phi -\CHm) \quad & \text{in }\Omega_T := \Omega \times [0,T] \\
\mu = -\gamma^{-2} \laplacian \phi + F'(\phi)  & \text{in }\Omega_T \\
\partial_{\vec{\nu}} \phi = \partial_{\vec{\nu}} \mu = 0 & \text {in } \partial \Omega \times [0,T] \quad \text{\textit{or}} \quad \phi,\mu \quad \text{$\Omega$-periodic}\\
\phi = \phi_0 & \text {in }  \Omega \times \{t=0\}\, .
\end{cases}
\end{equation}
By numerical approximation of Eq.~\eqref{eqn:CHO}, it is possible to obtain the minimizers of the Ohta-Kawasaki functional for a given value of $\gamma$ and $\CHm$. A throughout analysis of the phase plane of diblock copolymers in 2D can be found in~\cite{choksi2011_2d}. To our purposes, i.e. predicting the pattern according to the control variable $\CHm$, the fundamental observation is that for a given value of $\gamma$ (which is fixed once the kind of monomers are chosen, and thus cannot be regarded as a control parameter) we have four regions (see Figure~\ref{fig:CHm_dependence}):
\begin{itemize}
	\item $\CHm \in [-1, -\CHm_2) \cup (\CHm_2, 1]$: disorder;
	\item $\CHm \in (-\CHm_2,-\CHm_1)$: spots of material $A$ inside material $B$ (denoted in the following by $A$-spots);
	\item $\CHm \in (-\CHm_1,\CHm_1)$: stripes;
	\item $\CHm \in (\CHm_1,\CHm_2)$: spots of material $B$ inside material $A$ ($B$-spots).
\end{itemize}
Since we are interested in a regular structure, we will restrict our interest to spots and stripes configurations. In the following we will consider the case when $\gamma=20$, and the corresponding thresholds $\CHm_1=0.2$, $\CHm_2=0.6$.


\subsection{Homogenization of periodic microstructures} \label{sec:applicationtodiblockcopolymers:homo}

The length-scale of diblock copolymers patterns is typically comprised between 1 and 50 nanometres, some orders of magnitude smaller than the geometrical length scale even for microscale structural applications. Therefore, a direct simulation of the whole domain is infeasible and an up-scaling strategy is thus required. Homogenization theory allows to derive macroscopic effective properties of microscopically heterogeneous media~\cite{allaire:book}. In the framework of elasticity, given a microscopic structure of known elastic properties, it aims at determining the so-called homogenized tensor, i.e. a stiffness tensor which models the macroscopic behaviour of the microstructure. As such, homogenization theory provides a theoretical framework for the mathematical modelling of composites materials, which are obtained by mixing at a very fine scale different materials.

This operation is performed by embedding the original two-scales problem in a family of problems with increasing separation between the two scales. Each problem is parametrized by the microscopic length-scale $\epsilon$, and the limit for $\epsilon \rightarrow 0$ is taken into account. The case of periodic microstructures can be treated by means of a two-scales asymptotic expansion, which provides an explicit formula for the homogenized tensor~\cite{allaire:book}. The most general theory in homogenization is based on the notion of \textit{H}-convergence, introduced by Spagnolo under the name of $G$-convergence~\cite{spagnolo1968convergenza,spagnolo1976convergence}, and later generalized by Tartar and Murat~\cite{francfort1986homogenization,murat1997hconvergence}. This defines an adequate topology for the notion of convergence of problems as $\epsilon$ goes to zero.

We now go back to diblock copolymers melts, and address the problem of finding a homogenized tensor capable of describing the medium at a macroscopic level. As mentioned in Section~\ref{sec:diblock_selfassembly}, when a diblock copolymer solution undergoes a critical temperature, phase separation takes place, ending up with an ordered structure consisting of regions rich of material $A$ or material $B$. Equilibrium configurations can be recovered as minima of the Ohta-Kawasaki functional~\eqref{eqn:ohta-kawasaki}, and are described by the order parameter $\phi(\vec{x}) \in [-1,1]$, defined as the difference between the relative density of the two monomers. Thus we have $\phi=-1$ in pure $B$ zones and $\phi=1$ in pure $A$ zones. 

We suppose that a controller has faculty to choose the composition of the solution in the different points of the domain, and thus we introduce the control variable $\CHm \in L^{\infty}(\Omega;[\underline{\CHm},\overline{\CHm}])$, where $\underline{\CHm} = - \CHm_2$ and $\overline{\CHm} = \CHm_2$. The variable $\CHm$ determines the microstructure in each point of the domain. Where $\CHm \in (-\CHm_1,\CHm_1)$, for instance, the body exhibits stripes, while where $\CHm \in (\CHm_1,\CHm_2)$ $B$-spots are present. We notice that, since the parameter $\gamma$ is set once the couple of monomers $A$ and $B$ are chosen, $\gamma$ does not account as a control variable.

Denoting by $\epsilon$ the length-scale of the microstructure, we consider the following two-scales formulation for the order parameter:
\begin{equation*}
\phi^\epsilon(\vec{x}) = \phi \big(\vec{x},\frac{\vec{x}}{\epsilon}\big)\, ,
\end{equation*}
where for each $\vec{x} \in \Omega$ the function $\vec{y} \mapsto \phi(\vec{x},\vec{y})$ is the periodic microstructure attaining the minimum of the Ohta-Kawasaki functional for $\CHm=\CHm(\vec{x})$. 
We remark that the periodicity cell, which we denote by $Y_\CHm \subset \mathbb{R}^d$, may depend on the interval which $\CHm(\vec{x})$ belongs to: in the interval $(-\CHm_1,\CHm_1)$ we have rectangular elementary cell, while in the regions $(\CHm_1,\CHm_2)$ and $(-\CHm_2,-\CHm_1)$ the elementary cell has hexagonal shape (see Figure~\ref{fig:DB_2D_periodic_tiling}). We remark that, when different choices of elementary cell are available, the resulting homogenized tensor does not depend on the particular choice, since the associated periodic function $\vec{y} \mapsto \phi(\vec{x},\vec{y})$ is not affected by the choice.


\begin{figure}
	\centering
	\subfloat[][] {\includegraphics[width=.2\textwidth]{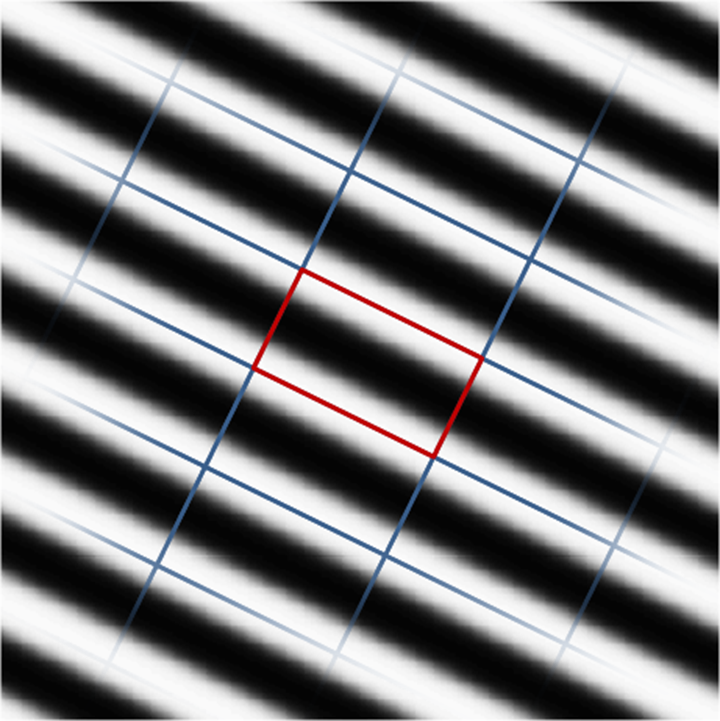}\label{fig:DB_2D_lamellae_grid_phade}} \quad
	\subfloat[][] {\includegraphics[width=.2\textwidth]{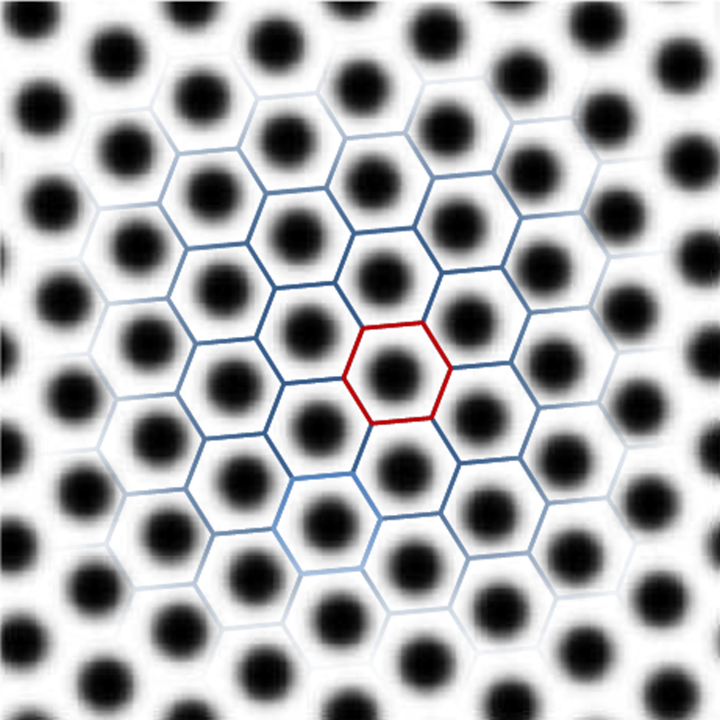}\label{fig:DB_2D_spots_hex_phade}}\quad
	\caption{Periodicity cells for diblock copolymers tilings in 2D.}
	\label{fig:DB_2D_periodic_tiling}
\end{figure}

We suppose that pure materials $A$ and $B$ can be modelled as linearly elastic media, with Lam\'{e} moduli $(\mu^A,\lambda^A)$ and $(\mu^B,\lambda^B)$ respectively. In the region of intermediate composition, we suppose that the solid behaves as a linearly elastic isotropic medium, with Lam\'{e} moduli which are a convex combination of those of pure materials. In other words, in each point $\vec{x} \in \Omega$, we suppose:
\begin{equation}
\tensel^\epsilon(\vec{x}) 
= 2\left( \frac{\mu^A + \mu^B}{2} + \frac{\mu^A - \mu^B}{2} \phi^\epsilon(\vec{x}) \right) \, \Ifour 
+  \left( \frac{\lambda^A + \lambda^B}{2} + \frac{\lambda^A - \lambda^B}{2} \phi^\epsilon(\vec{x})\right) \, \Itwo \otimes \Itwo\, ,
\end{equation}
where $\Itwo$ and $\Ifour$ are the second-order and fourth-order identity tensors respectively. 

Although the pure phases are modelled as isotropic media, the composite material is not isotropic in general: the properties of the material may depend on its orientation. For this reason the model has to take into account the orientation of the microstructure. To do so, we introduce another control variable, $\theta \in L^{\infty}(\Omega;[0,2 \pi))$, which describes the local orientation of the diblock copolymer pattern.

Therefore, we introduce the rotated periodicity cell (where $\vec{R}$ is defined in~\eqref{eqn:tensorQ_tensorR}):
\begin{equation*}
Y_{\CHm,\theta} = \vec{R}(\theta) Y_\CHm = \Big\{ \vec{y} \in \mathbb{R}^d \quad \text{s.t. } \vec{R}(\theta)^{-1} \vec{y} \in Y_\CHm\Big\} \, .
\end{equation*} 
We introduce the space of candidate minima for the Ohta-Kawasaki functional for the monomers proportion $\CHm$ and orientation $\theta$, defined as follows,
\begin{equation*}
V_{\CHm,\theta} := \Big\{ \phi \in H^1(Y_{\CHm,\theta}) \,\text{, } \text{$Y_{\CHm,\theta}$-periodic, s.t } \fint_{Y_{\CHm,\theta}} \phi = \CHm \Big\}\, ,
\end{equation*}
and the associated Ohta-Kawasaki functional
\begin{equation*}
\mathcal{H}_{\CHm,\theta}(\phi) 
:= \int_{Y_{\CHm,\theta}} \Bigl( \frac{1}{2 \gamma^2} |\nabla \phi|^2 + F(\phi)\Bigr) \diff{\vec{y}} 
+ \frac{1}{2} \int_{Y_{\CHm,\theta}}\int_{Y_{\CHm,\theta}} G(\vec{y}_1,\vec{y}_2) (\phi(\vec{y}_1)-\CHm)(\phi(\vec{y}_2)-\CHm) \diff{\vec{y}_1} \diff{\vec{y}_2}\, .
\end{equation*}
Therefore, for each $\vec{x} \in \Omega$ we define the function $\phi(\vec{x}, \cdot)$ as follows:
\begin{equation}
\label{eqn:twoscales_phi_argmin}
\phi(\vec{x}, \cdot) = \underset{\psi \in V_{\CHm(\vec{x}),\theta(\vec{x})}}{\text{argmin}} \, \mathcal{H}_{\CHm(\vec{x}),\theta(\vec{x})}(\psi) \, .
\end{equation}
To sum up, the elastic properties of the medium are modelled by the following two-scales model:
\begin{equation}
\label{eqn:twoscales_model}
\begin{split}
\tensel^\epsilon(\vec{x}) 
&= 2\left( \frac{\mu^A + \mu^B}{2} + \frac{\mu^A - \mu^B}{2} \phi(\vec{x})^\epsilon \right) \, \Ifour 
+ \left( \frac{\lambda^A + \lambda^B}{2} + \frac{\lambda^A - \lambda^B}{2} \phi(\vec{x})^\epsilon\right) \, \Itwo \otimes \Itwo \, ,\\
\phi^\epsilon(\vec{x}) &= \phi \left(\vec{x},\frac{\vec{x}}{\epsilon}\right) \, ,\\
\phi(\vec{x}, \cdot) &= \underset{\psi \in V_{\CHm(\vec{x}),\theta(\vec{x})}}{\text{argmin}} \, \mathcal{H}_{\CHm(\vec{x}),\theta(\vec{x})}(\psi)\, .
\end{split}
\end{equation}
Since we have written the tensor in the form $\tensel^\epsilon(\vec{x}) = \tensel\left(\vec{x},\frac{\vec{x}}{\epsilon}\right)$, then the homogenized tensor $\tensel^*$ is given by the homogenization formula (see~\cite{allaire:book}):
\begin{equation}
\label{eqn:twoscales_Estar}
\tensel^*_{ijkl}(\vec{x}) = \frac{1}{|Y_{\CHm(\vec{x}),\theta(\vec{x})}|}\int_{Y_{\CHm(\vec{x}),\theta(\vec{x})}} \tensel(\vec{x},\vec{y}) 
\big( e^{ij} + \symgradsubscript{\vec{y}}{\vec{w}^{ij}} \big) :
\big( e^{kl} + \symgradsubscript{\vec{y}}{\vec{w}^{kl}} \big) \diff{\vec{y}} \, ,
\end{equation}
where the function $\vec{y} \mapsto \vec{w}^{ij}(\vec{x},\vec{y})$ solves the cell problem:
\begin{equation}
\label{eqn:twoscales_cellpb_rotated}
\begin{cases}
-\diverg{_{\vec{y}}} \Bigl( \tensel(\vec{x},\vec{y}) \symgradsubscript{\vec{y}}{\vec{w}^{ij}(\vec{x},\vec{y})} \Bigr)=
\diverg{_{\vec{y}}} \Bigl( \tensel(\vec{x},\vec{y}) e^{ij} \Bigr) & \text{in } Y_{\CHm(\vec{x}),\theta(\vec{x})}\\
\vec{y} \mapsto \vec{w}^{ij} (\vec{x},\vec{y}) & \text{$Y_{\CHm(\vec{x}),\theta(\vec{x})}$-periodic,}
\end{cases}
\end{equation}
and, denoting by $\{e_i\}_{1\leq i \leq d}$ the canonical basis of $\mathbb{R}^d$, we define the following basis for the space of symmetric second order tensors: 
\begin{equation}
e^{ij} = \frac{1}{2} \bigl( e_i \otimes e_j + e_j \otimes e_i \bigr) \, .
\end{equation}
The expression~\eqref{eqn:twoscales_Estar} may be cumbersome in actual implementations, since it requires the solution of the cell problem~\eqref{eqn:twoscales_cellpb_rotated} in each point of the domain; however, Eq.~\eqref{eqn:twoscales_Estar} can be restated in an equivalent and simpler form. First, we notice that $\tensel^*(\vec{x})$ depends on $\vec{x}$ only through the control variables $\CHm$ and $\theta$, and thus we can write $\tensel^*(\vec{x}) = \tensel^*_{\CHm(\vec{x}),\theta(\vec{x})}$, where $\tensel^*_{\CHm,\theta}$ denotes the homogenized stiffness tensor associated with the monomers proportion $\CHm$ and the microstructure orientation $\theta$. Moreover, the effect of $\theta$ is simply a rotation of the local frame of reference. Then, instead of computing the homogenized tensor on the rotated domain $Y_{\CHm,\theta}$, it is more convenient to compute the homogenized tensor on the non-rotated domain $Y_\CHm$, and then to rotate the tensor. As a matter of fact, through a change of variables $\vec{z} = \vec{R}(\theta)^{-1} \vec{y}$, it can be proved that 
\begin{equation}
\label{eqn:homo_tensor_rotation}
\tensel^*_{\CHm,\theta} = \vec{Q}(\theta) \tensel^*_{\CHm,0}\, .
\end{equation}
The task of computing $\tensel^*(\vec{x})$ thus reduces to that of computing $\tensel^*_\CHm := \tensel^*_{\CHm,0}$ for $\CHm = \CHm(\vec{x})$. 
Thus, by means of homogenization theory we have that the homogenized tensor $\tensel^*$ is given by the following:
\begin{equation}
\label{eqn:twoscales_model_smart}
\begin{split}
\tensel^*(\vec{x}) &= \vec{Q}(\theta(\vec{x})) \tensel^*_{\CHm(\vec{x})} \\
(\tensel^*_\CHm)_{ijkl} &= \frac{1}{|Y_\CHm|}\int_{Y_\CHm} \tensel_\CHm(\vec{y}) 
\big( e^{ij} + \symgrad{\vec{w}_\CHm^{ij}} \big) :
\big( e^{kl} + \symgrad{\vec{w}_\CHm^{kl}} \big) \diff{\vec{y}} \\
\tensel_\CHm(\vec{y}) &= 2 \left( \frac{\mu^A + \mu^B}{2} + \frac{\mu^A - \mu^B}{2} \phi_\CHm(\vec{y}) \right)\, \Ifour 
+ \left( \frac{\lambda^A + \lambda^B}{2} + \frac{\lambda^A - \lambda^B}{2} \phi_\CHm(\vec{y})\right) \, \Itwo \otimes \Itwo\\ 
\phi_\CHm &= \underset{\psi \in V_{\CHm,0}}{\text{argmin}} \, \mathcal{H}_{\CHm,0}(\psi)\, ,
\end{split}
\end{equation}
where the function $\vec{w}^{ij}_\CHm(\vec{y})$ solves the cell problem:
\begin{equation}
\label{eqn:twoscales_cellpb}
\begin{cases}
-\diverg{} \Bigl( \tensel_\CHm(\vec{y}) \symgrad{\vec{w}_\CHm^{ij}(\vec{y})} \Bigr)=
\diverg{} \Bigl( \tensel_\CHm(\vec{y}) e^{ij} \Bigr) & \text{in } Y_\CHm\\
\vec{w}_\CHm^{ij} \quad \text{$Y_\CHm$-periodic} \, . &
\end{cases}
\end{equation}
Since it would be computationally unaffordable, the homogenized tensor $\tensel^*(\vec{x})$ cannot be computed for each value of $\vec{x}$ (or, better, for each element of the mesh).
To overcome this latter obstruction, we solve Eq.~\eqref{eqn:twoscales_cellpb} by means of the Finite Element method for some values of $\CHm$, and we build a database of homogenized tensors. 
Then, the homogenized tensors for remaining values $\CHm$ are approximated by interpolating the computed values. Figure~\ref{fig:homogenized_entries} shows the results of this process for two different ratios of stiffness of pure phases $E^A/E^B$. Remarkably, the homogenized tensor of both spots configurations turns out to be isotropic. For this reason, for these classes of materials, we do not report the graph of the dependence of the homogenized tensor on $\theta$. 


\begin{figure}[p]
	\centering
	\subfloat[][$E^A/E^B=10$]  {\includegraphics[width=.5\textwidth,page = 1,trim={1.65cm 1.7cm 1.25cm 0},clip]{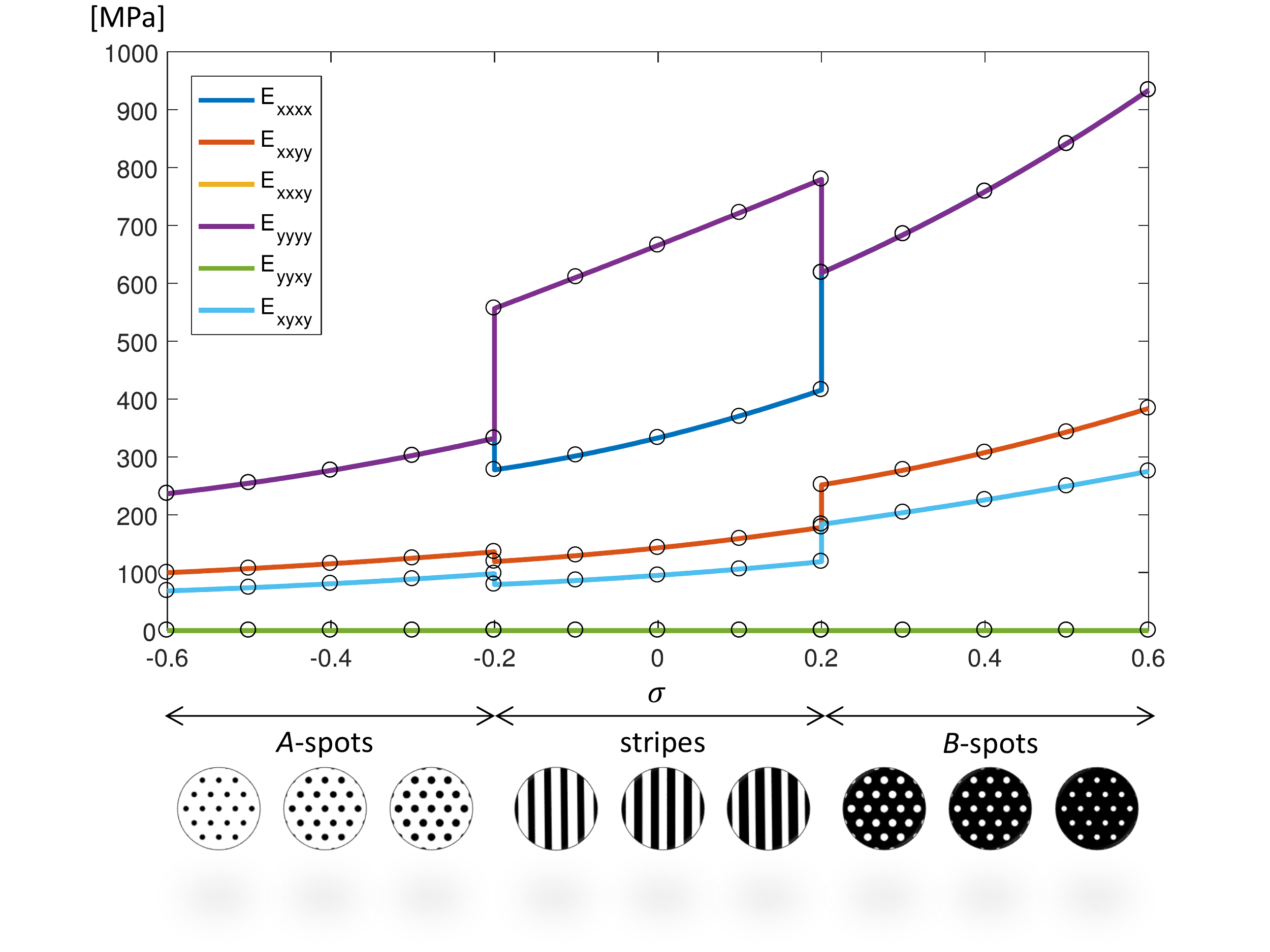}} \hfill
	\subfloat[][$E^A/E^B=1000$]{\includegraphics[width=.5\textwidth,page = 2,trim={1.65cm 1.7cm 1.25cm 0},clip]{homogenized_tensor.pdf}} \\ 
	\subfloat[][$E^A/E^B=10$]  {\includegraphics[width=.5\textwidth,page = 1,trim={0 2.75cm 0 0},clip]{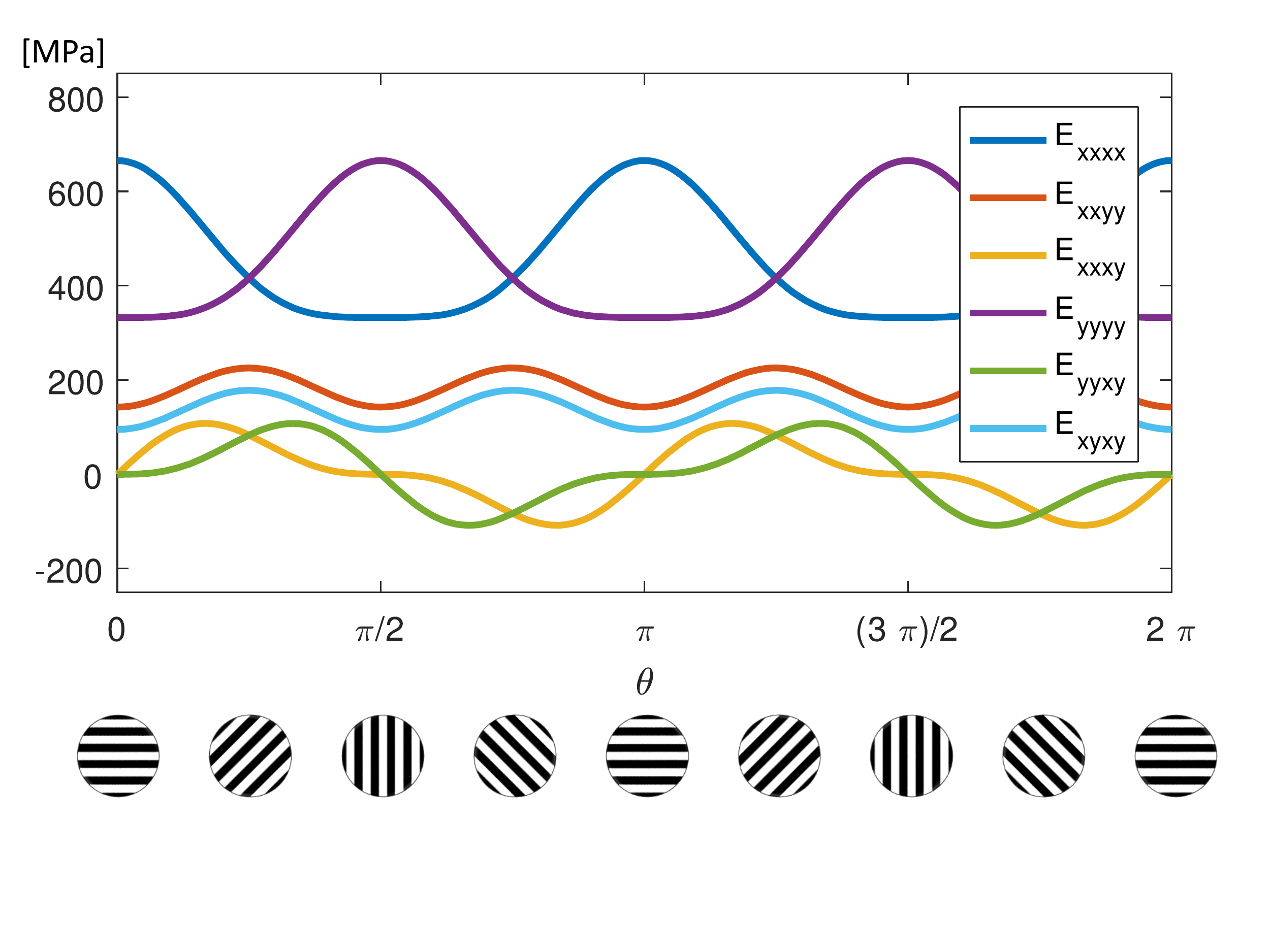}} \hfill
	\subfloat[][$E^A/E^B=1000$]{\includegraphics[width=.5\textwidth,page = 2,trim={0 2.75cm 0 0},clip]{rotated_tensor_lamellae.pdf}} 
	\caption{Entries of the homogenized stiffness tensor for different $E^A/E^B$ ratios (reported in the tagline), in dependence of $\CHm$ (first row) and on $\theta$ (second row). First row: plots of the application $\CHm \mapsto \tensel^*_\CHm$. Circles represent the values computed numerically, while lines are the three-points polynomial interpolators, computed by interpolating the extreme end middle points of each interval. Second row: plots of the application $\theta \mapsto \vec{Q}(\theta) \tensel^*_\CHm$ for $\CHm = 0$, corresponding to a stripes configuration. The equivalent plot for spots configurations is not reported, since such microstructures are isotropic at the macroscale.}
	\label{fig:homogenized_entries}
\end{figure}

\subsection{TopOpt problem formulation}

We finally show how the problem of finding the optimal distribution of diblock copolymers patterns inside a reference domain can be formulated in the form of problem~\eqref{eqn:pb_ct_generalized}. In 2D we have $N=3$ classes of materials ($\idxmat=1$ is associated to $A$-spots, $\idxmat=2$ with stripes and $\idxmat=3$ with $B$-spots). 
In this case each variable $\matm_\idxmat$ can be interpreted as the physical variable $\CHm$ associated to the $\idxmat$-th material. In the optimal configuration obtained by the algorithm the optimal value of $\CHm$ will be given by:
\begin{equation} \label{eqn:CHm_convexcombination}
\CHm(\vec{x}) = \sum_{\idxmat=1}^{N} z_\idxmat(\vec{x}) \matm_\idxmat(\vec{x}) \, .
\end{equation} 
Therefore, in pure material regions (where for any $\vec{x}$ we have $z_{\hat{\idxmat}}(\vec{x}) =1$ for some $\hat{\idxmat}$, and $z_\idxmat(\vec{x}) = 0$ for any $\idxmat \neq \hat{\idxmat}$), we will have $\CHm(\vec{x}) = \matm_{\hat{\idxmat}}(\vec{x})$.

Thus, the lower and upper bounds $\underline{\matm}_\idxmat$ and $\overline{\matm}_\idxmat$ are given by (see Section~\ref{sec:diblock_selfassembly}):
\begin{equation*}
\underline{\matm}_1 = -\CHm_2, \quad
\overline{\matm}_1 = \underline{\matm}_2 = -\CHm_1, \quad
\overline{\matm}_2 = \underline{\matm}_3 = \CHm_1, \quad
\overline{\matm}_3 = \CHm_2.
\end{equation*}
The stiffness tensor of each $\idxmat = 1,\dots,N$ is given by $\tensel_\idxmat(\matm_\idxmat) = \tensel^*_{\matm_\idxmat}$ (see Eq.~\eqref{eqn:twoscales_model_smart}).

We are left with the characterization of the specific weights $\rho_\idxmat(\matm)$. By denoting by $\rho^A$ and $\rho^B$ the specific weights of pure $A$ and $B$ phases respectively, we suppose that:
\begin{equation}
\rho_\idxmat(\matm) = \frac{\rho^A+\rho^B}{2} + \frac{\rho^A-\rho^B}{2} \matm \qquad \forall\, \idxmat = 1,\dots,N\, .
\end{equation}

\section{Numerical results} \label{sec:results}

\begin{figure}
	\centering
	\subfloat[][Test \textit{square}]{\includegraphics[width=.33\textwidth, trim={7cm 5cm 7cm 4.5cm},clip,page=1]{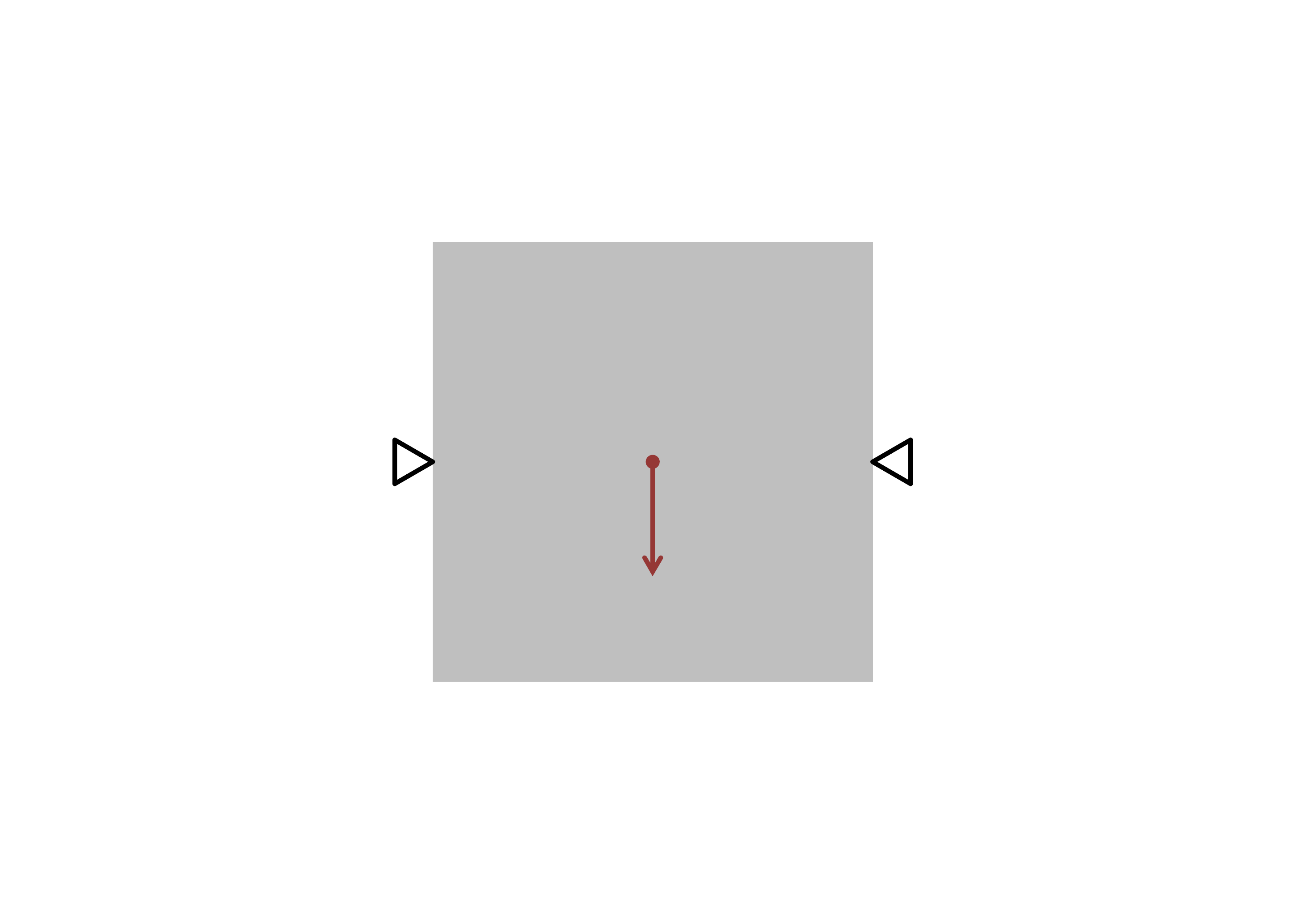}\label{fig:model_square}}
	\subfloat[][Test \textit{MBB beam}]{\includegraphics[width=.33\textwidth, trim={7cm 5cm 7cm 4.5cm},clip,page=2]{model_ALL.pdf}\label{fig:model_overbridge}}
	\subfloat[][Test \textit{L-shape}]{\includegraphics[width=.33\textwidth, trim={7cm 5cm 7cm 4.5cm},clip,page=3]{model_ALL.pdf}\label{fig:model_Lshape}}
	\caption{Geometry and boundary conditions of the the case studies considered in this paper.}	
	\label{fig:models}
\end{figure}

In this section we show the results of some numerical test of a 2D implementation of Algorithm~\ref{alg:topopt_multimat}. In Section~\ref{sec:results:basepb} the basic problem~\eqref{eqn:pb_fem_base} is addressed, while in Section~\ref{sec:results:generalized} the algorithm for the generalized problem~\eqref{eqn:pb_fem_generalized} is applied to the optimization of self-assembling diblock copolymers. Before discussing the numerical results we give some additional details about the implementation.

Piecewise linear (P1) finite elements defined over a triangular mesh are employed for the discretization of the state variable $\vec{u}$, while piecewise constant (P0) elements are used for the design variables $z_\idxmat$, $\matm_\idxmat$ and $\theta_\idxmat$. The enhanced version of the fixed-point algorithm presented in Remark~\ref{rem:enahanced} is implemented, with the following parameters: $z_{min} = \delta = 10^{-3}$, $\zeta_z = \zeta_\matm = 0.05$, $\eta = \frac{1}{2}$. In the post-processing stage, variables $z_\idxmat$ are rounded to the closest value among 0 and 1. 
Mesh adaptivity techniques could be employed to decrease the computational burden on large scale problems, while increasing the accuracy in the evaluation of the objective function (see e.g. \cite{maute1995adaptive,bruggi2011fully}).

\begin{remark}[Dealing with local minima]
	It is well known that TopOpt problems are highly non-convex and may feature many local minima, thus small variations in the initial design may result in drastic changes in the result of the optimization algorithm~\cite{sigmund:checkerboards,bendsoe:book}. The most common strategies to deal with the problem of local minima rely on \textit{continuation} methods: the original problem is replaced by a modified one, featuring better properties, which is gradually changed to recover the original formulation. In our implementation we set for the first 30 iterations $p=1$; then, in the following 30 iterations, we gradually raise its value until the target value $p=3$ is reached.
\end{remark}

Figure~\ref{fig:models} shows the geometries of four different test cases, popular in the context of TopOpt. The grey region indicates the design domain $\Omega$, red arrows denote loads, while the other symbols (point clamps and linear clamps) indicate where homogeneous Dirichlet conditions are set. In the remaining part of the boundary, homogeneous Neumann conditions (stress free conditions) are set.

\subsection{Basic problem: discrete orientation optimization} \label{sec:results:basepb}

Consider first the basic problem~\eqref{eqn:pb_fem_base}, which consists in finding the optimal distribution of matter inside a domain, by choosing among a set of $N$ candidate materials. To perform numerical tests, we consider a set of $N$ orthotropic materials with different principal directions. We denote by $\alpha_\idxmat$ the principal stiffness direction of the material $\idxmat$, and by $\Theta$ the set of candidate directions $\Theta = (\alpha_1, \dots, \alpha_N)$. For simplicity we consider the case when all candidate materials can be obtained by rotation of a single reference material with stiffness tensor $\tensel^0$, with the following non-null entries ($\tensel^0$ is the homogenized tensor of a material made of equally spaced horizontal stripes of two isotropic materials of Poisson's coefficient $\nu = 0.3$ and Young moduli $10^2$ and $10^3$, respectively):
\begin{equation*}
\tensel^0_{xxxx} = 665.5 , \,
\tensel^0_{yyyy} = 332.8 , \,
\tensel^0_{xxyy} = 142.6 , \,
\tensel^0_{xyxy} = 95.2 \, .
\end{equation*}
Therefore, the stiffness tensor of the material $\idxmat$ reads as:
\begin{equation*}
\tensel_\idxmat = \Qrot(\alpha_\idxmat) \tensel^0 \, .
\end{equation*}
Notice that, in this setting, problem~\eqref{eqn:pb_fem_base} can be interpreted as a discrete orientation optimization of a single anisotropic material.

The results of the tests considered in this section are presented through two types of graphs. The first one shows through different colours (defined in the circular colourmap) the principal stiffness direction of the material selected in each point of the domain. The region where void is selected is represented in white. The second graph shows a visual rendering of the optimal structure, where the anisotropic materials are depicted by stripes oriented as the principal stiffness direction. 

In Figures~\ref{fig:PBbase_Lshape}--\ref{fig:PBbase_overbridge} two popular test cases, corresponding to cases (c) and (b) in Fig.~\ref{fig:models}, are considered. As usual in TopOpt, the solutions are truss structures (see e.g.~\cite{bendsoe:book}); moreover, in most cases, the material with an orientation similar to the beam turns out to be the most convenient, and, in turn, beams tend to align themselves with one of the available direction.


\begin{figure}
	\centering
	\includegraphics[width=.32\textwidth]{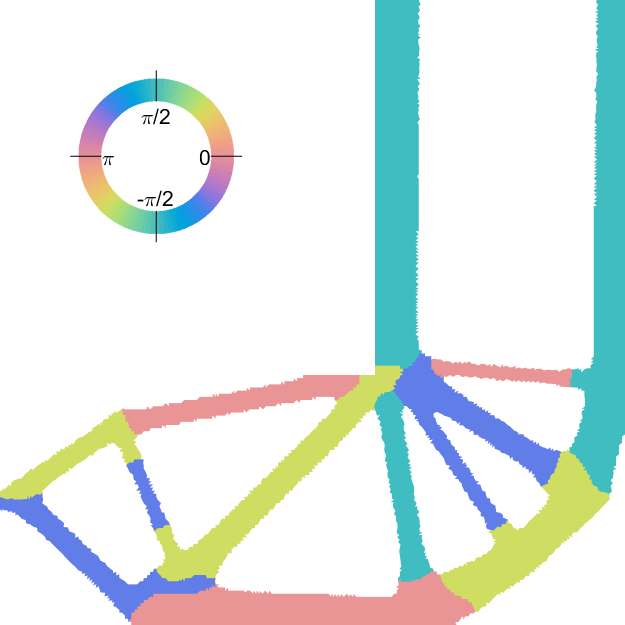} 
	\hspace{.2cm}
	\includegraphics[width=.32\textwidth]{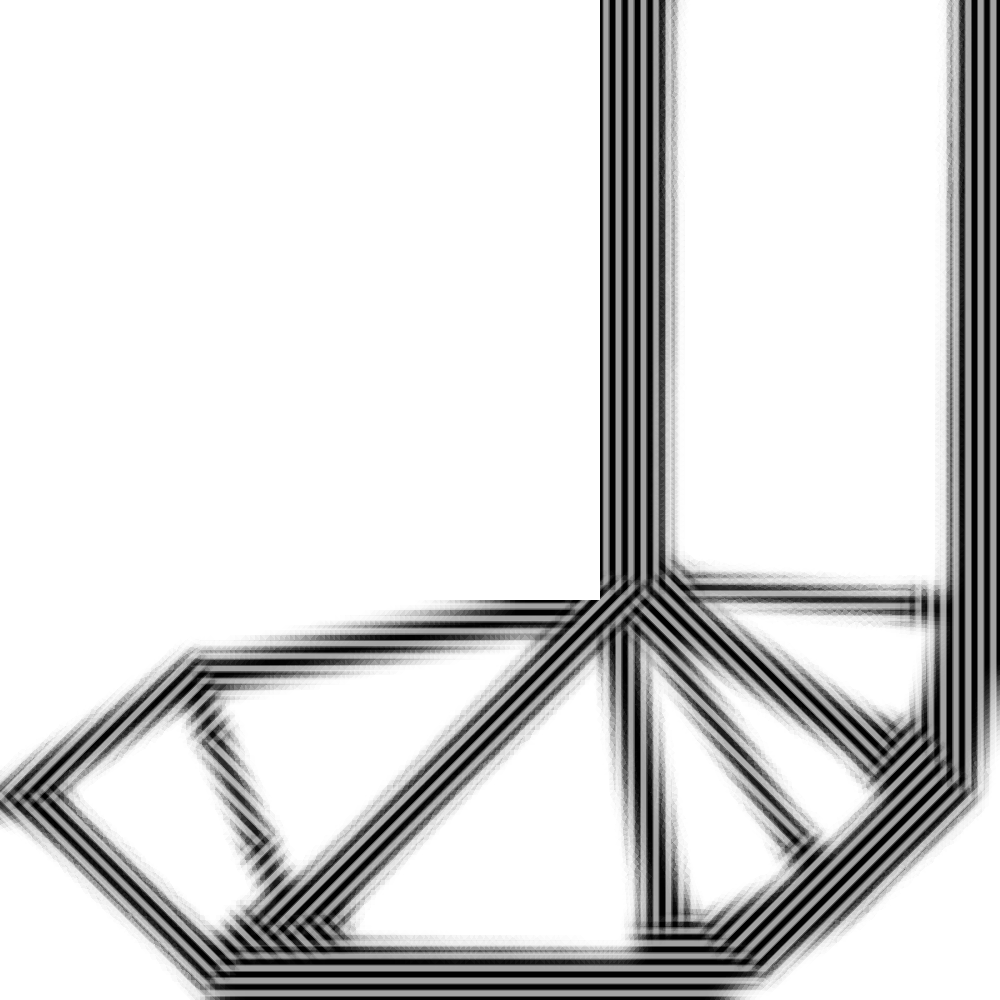} 
	\caption{Numerical results of the basic problem~\eqref{eqn:pb_fem_base} for the test \textit{L-shape} with $\Theta = (0, \frac{1}{4}\pi, \frac{1}{2}\pi, \frac{3}{4}\pi)$. Domain size $8 \times 8$, filtering radius $0.15$.}
	\label{fig:PBbase_Lshape}
\end{figure}

\begin{figure}
	\centering
	\includegraphics[width=.6\textwidth]{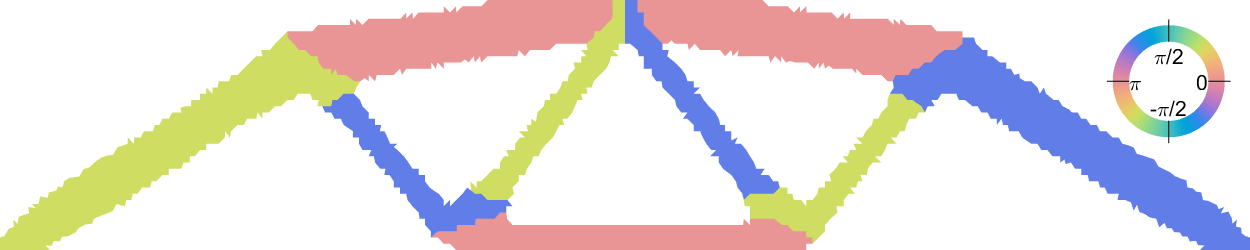} 
	\\	\vspace{.2cm}
	\includegraphics[width=.6\textwidth]{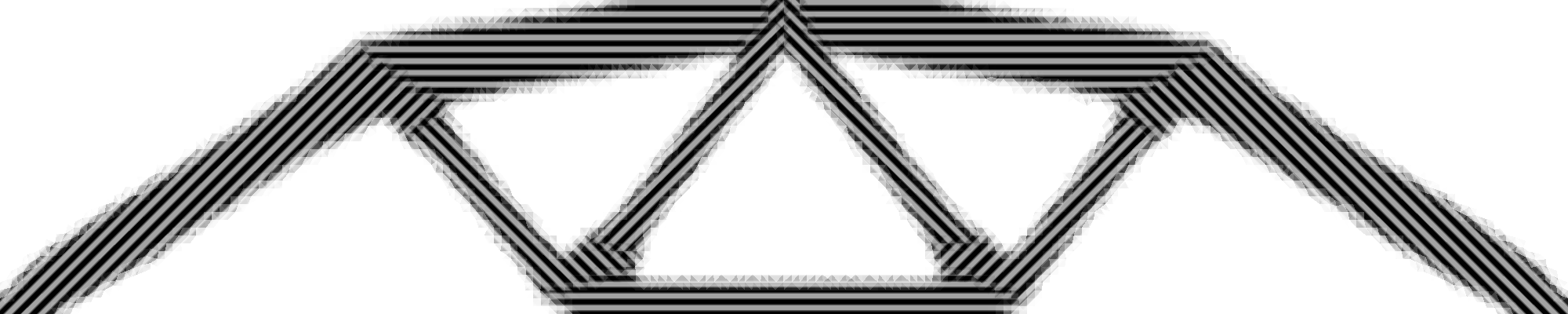} 
	\caption{Numerical results of the basic problem~\eqref{eqn:pb_fem_base} for the test \textit{MBB beam} with $\Theta = (0, \frac{1}{4}\pi, \frac{1}{2}\pi, \frac{3}{4}\pi)$. Domain size $20 \times 4$, filtering radius $0.15$.}
	\label{fig:PBbase_overbridge}
\end{figure}

In Figure~\ref{fig:PBbase_Square} the case study \textit{square} is considered. The solutions obtained with different sets $\Theta$ are compared. We notice that the topology and the shape of the solution is affected by the orientation of the available materials. In each case the optimal solution envisages two arches connecting the two clamps (located at the mid-points of the vertical sides of the domain), and some beams connecting the arches to the load point (located at the centre of the domain), but the curvature of the arches and the number of the beams depend on the set $\Theta$ of available orientations.

\begin{figure}
	\centering
	\includegraphics[width=.33\textwidth]{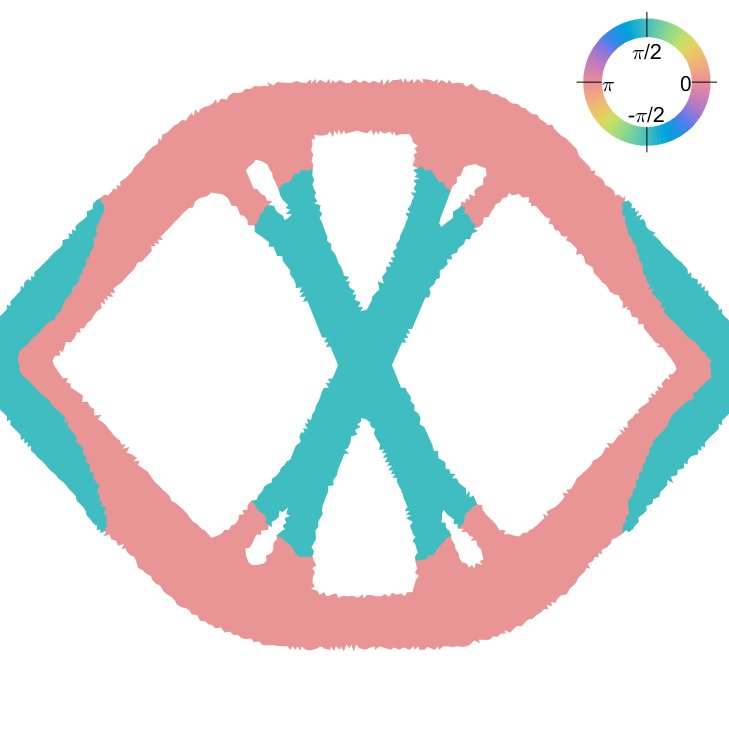} 
	\hspace{.2cm}
	\includegraphics[width=.33\textwidth]{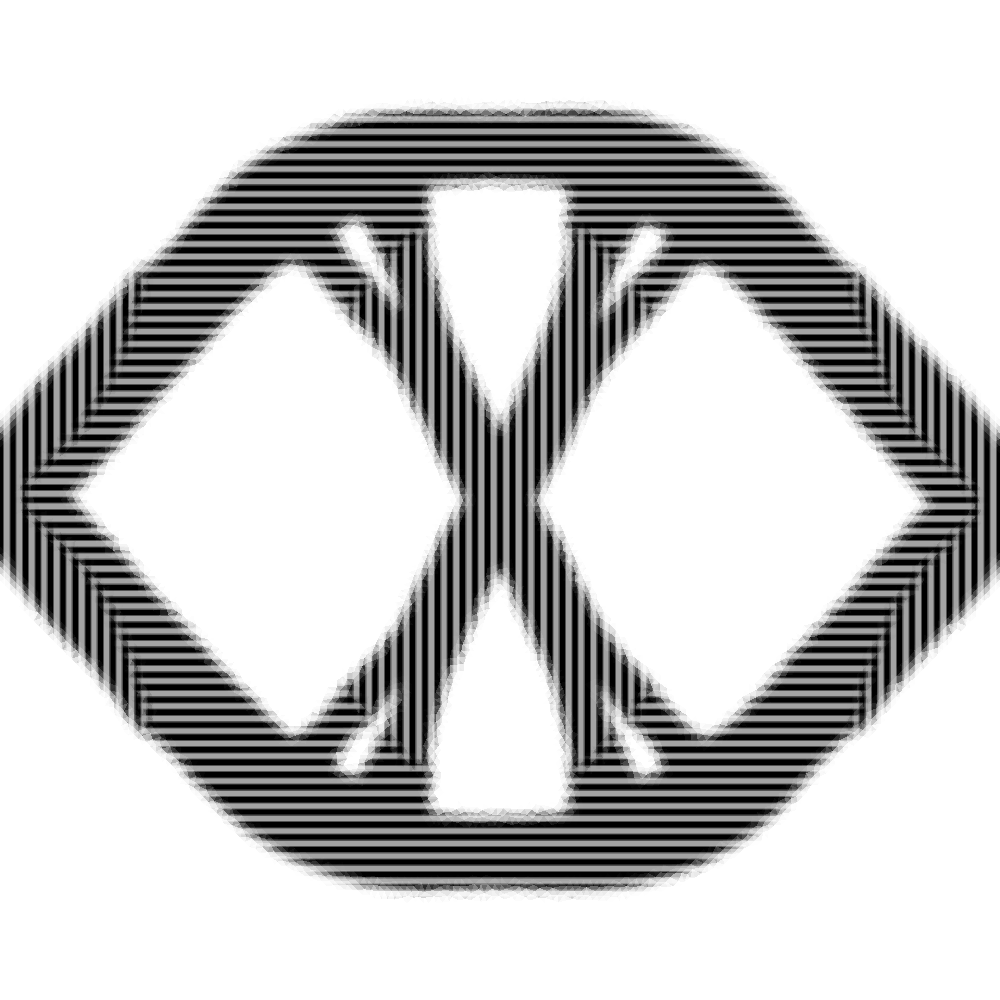} 
	\\	\vspace{-.25cm}
	\includegraphics[width=.33\textwidth]{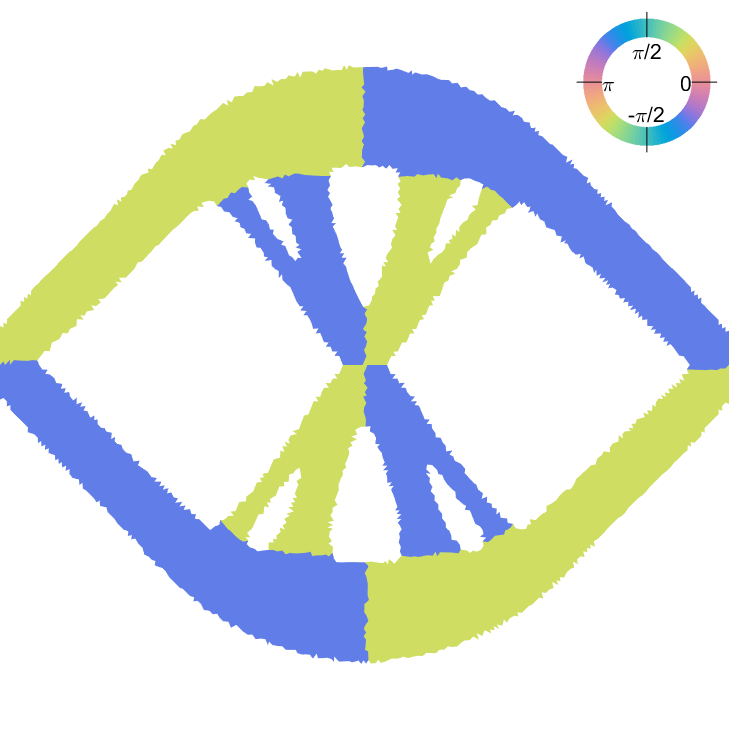} 
	\hspace{.2cm}
	\includegraphics[width=.33\textwidth]{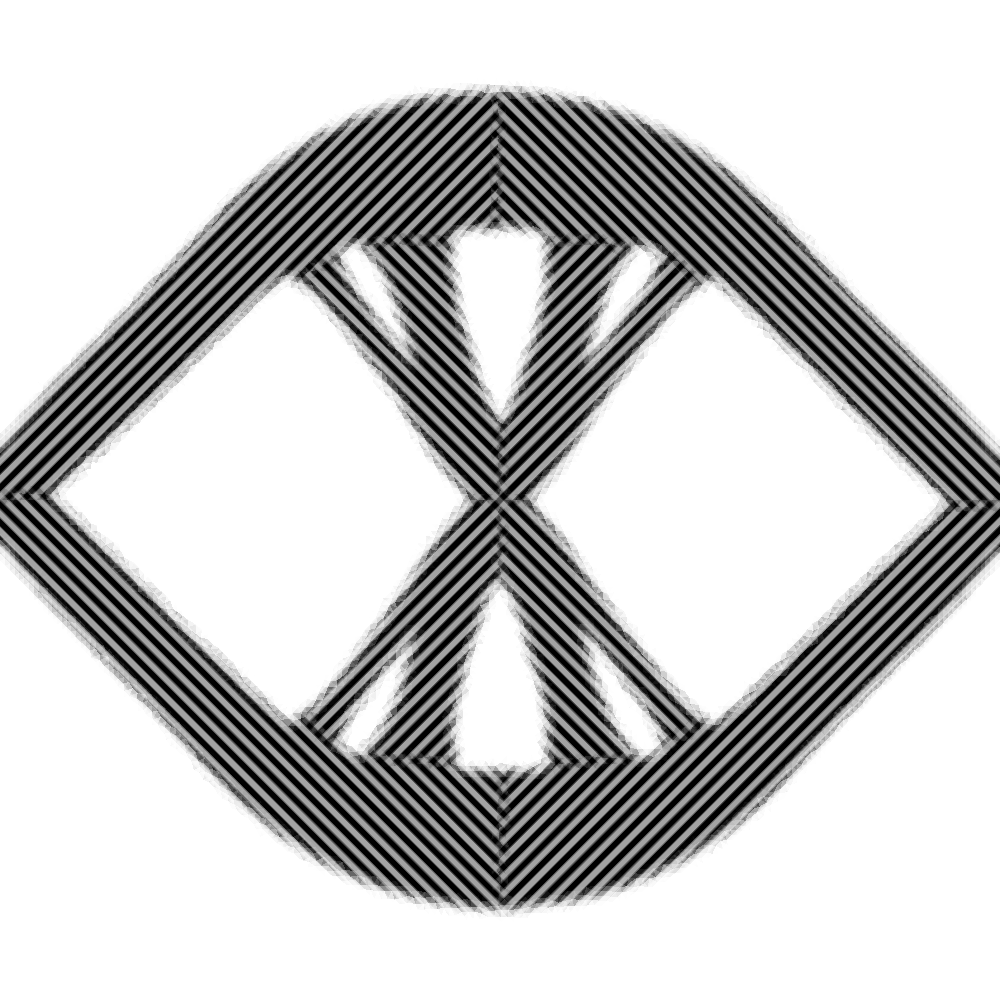} 
	\\	\vspace{-.25cm}
	\includegraphics[width=.33\textwidth]{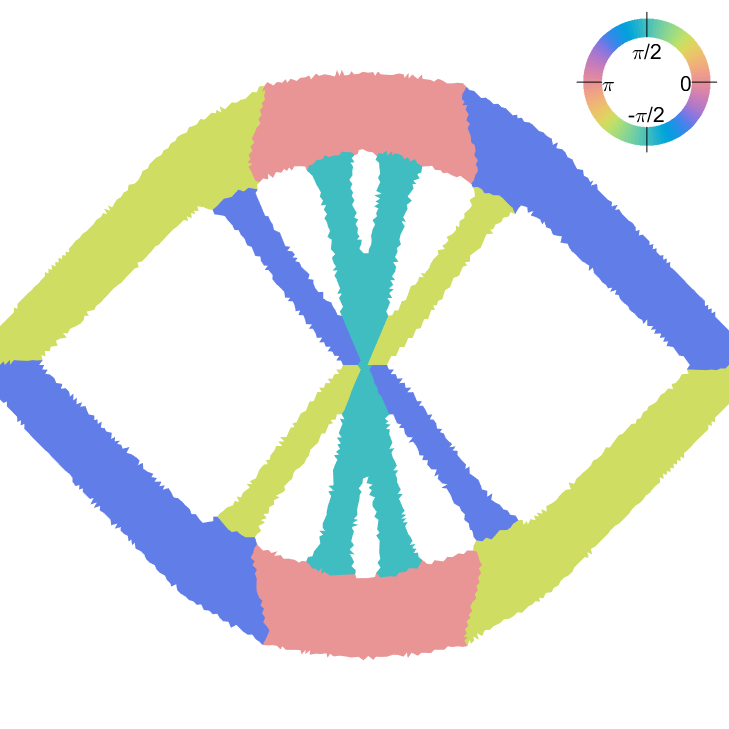} 
	\hspace{.2cm}
	\includegraphics[width=.33\textwidth]{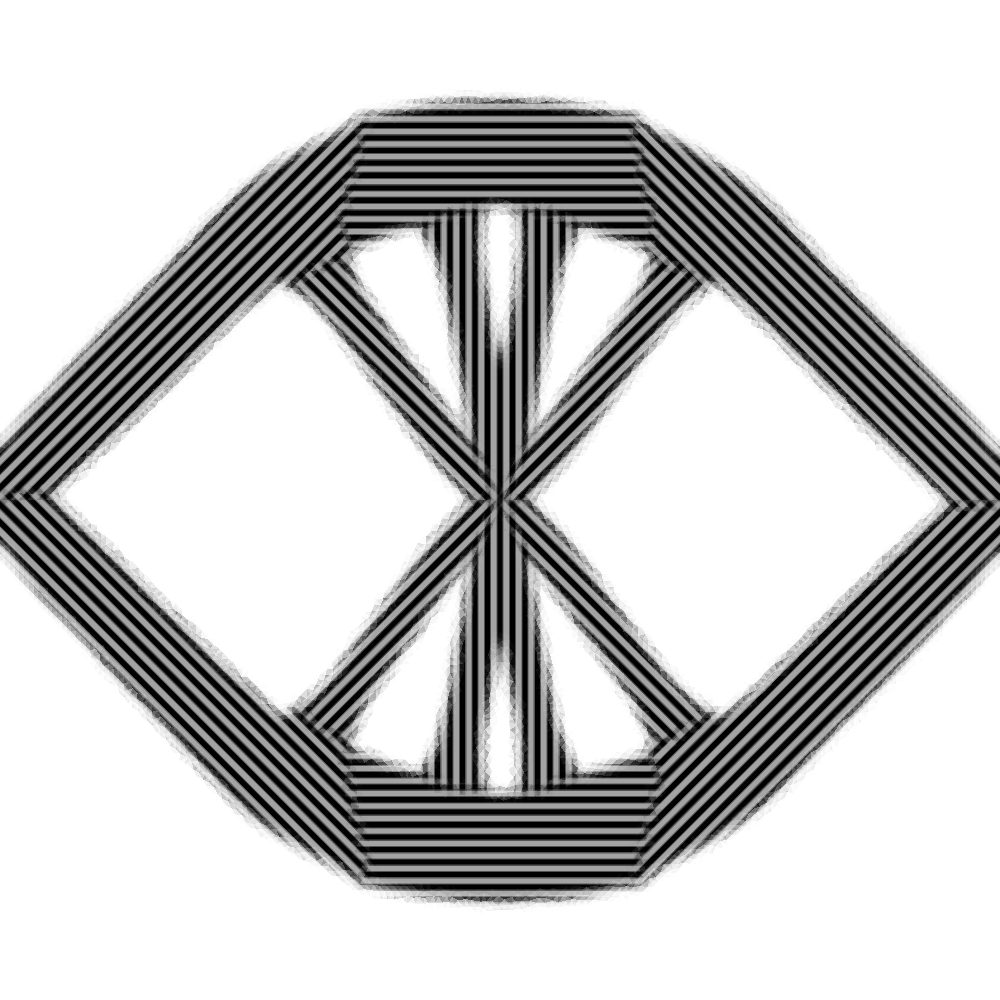} 
	\\	\vspace{-.25cm}
	\includegraphics[width=.33\textwidth]{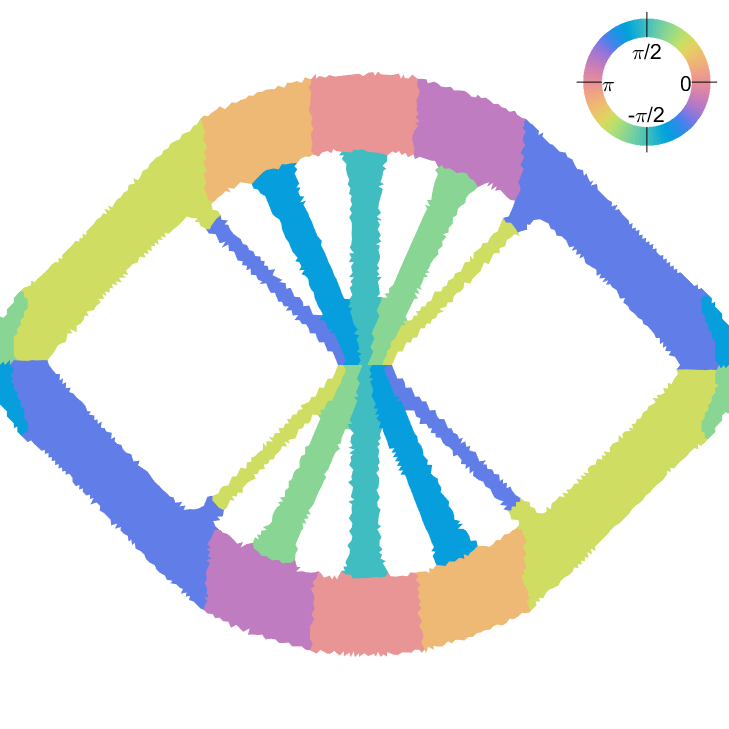} 
	\hspace{.2cm}
	\includegraphics[width=.33\textwidth]{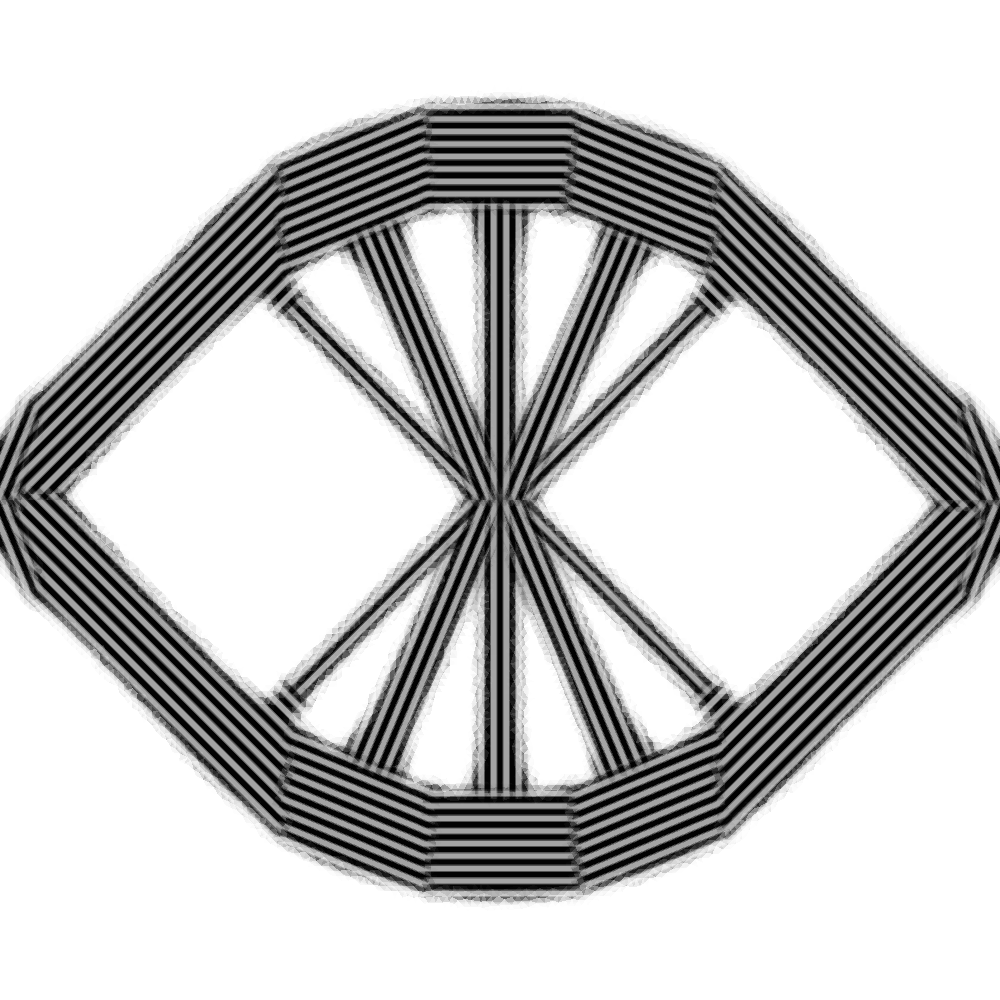} 
	\caption{Numerical results of the basic problem~\eqref{eqn:pb_fem_base} for the test \textit{square} with different sets of candidate materials: first row, $\Theta = (0, \frac{1}{2}\pi)$; second row, $\Theta = (\frac{1}{4}\pi, \frac{3}{4}\pi)$; third row, $\Theta = (0, \frac{1}{4}\pi, \frac{1}{2}\pi, \frac{3}{4}\pi)$; fourth row, $\Theta = (0, \frac{1}{8}\pi, \frac{1}{4}\pi, \frac{3}{8}\pi, \frac{1}{2}\pi, \frac{5}{8}\pi, \frac{3}{4}\pi, \frac{7}{8}\pi)$. Domain size $8 \times 8$, filtering radius $0.1$.}
	\label{fig:PBbase_Square}
\end{figure}

\subsection{Generalized problem: TopOpt of self-assembling materials} \label{sec:results:generalized}

In this section we consider the generalized problem~\eqref{eqn:pb_fem_generalized}, applied to the TopOpt of self-assembling diblock copolymers (see Section~\ref{sec:applicationtodiblockcopolymers}). In the tests we performed, the Lam\'{e} moduli of pure phases are given by
\begin{equation*}
\mu^*=\frac{E^*}{2(1+\nu^*)}, \qquad
\lambda^*=\frac{E^*\nu^*}{(1+\nu^*)(1-2\nu^*)} \, ,
\end{equation*}
where $*$ stands for either $A$ or $B$, and we set the Young moduli $E^A = 1000$, $E^B = 100$ or $E^B = 1$, and the Poisson coefficients $\nu^A=\nu^B=0.3$. In order to ensure a fair competition between materials, we set each time $\rho^A/\rho^B= E^A/E^B$.

Figures~\ref{fig:PBCHO_Overbridge} and \ref{fig:PBCHO_Square} show the results of the algorithm applied to test cases (b) and (a) respectively. The numerical results are presented through three types of graph: 
\begin{itemize}
	\item \textit{Allocation of microstructures}. This chart shows how the different kinds of microstructures ($A$-spots, lamellae, $B$-spots) are allocated in the design region. This chart is integrated with some enlargements revealing the micro-structures present in some specific points.
	\item \textit{Variable $\CHm$}. In this chart the optimal value of the variable $\CHm$, given by Eq.~\eqref{eqn:CHm_convexcombination} is shown. 
	\item \textit{Variable $\theta$}. This chart shows the variable $\theta_\idxmat$ in the regions where an anisotropic microstructure (i.e. stripes) is present. The optimal value is obtained as a weighted average, in a similar fashion to Eq.~\eqref{eqn:CHm_convexcombination}.
	Regions where an isotropic microstructure (i.e. spots configurations) is present are marked with grey. Since in orthotropic materials the orientation $\theta=\pi$ can be identified with $\theta=0$, a circular colourmap which identifies the over-mentioned angles is employed. 
\end{itemize}
\begin{figure}
	\centering
	\includegraphics[width=.6\textwidth]{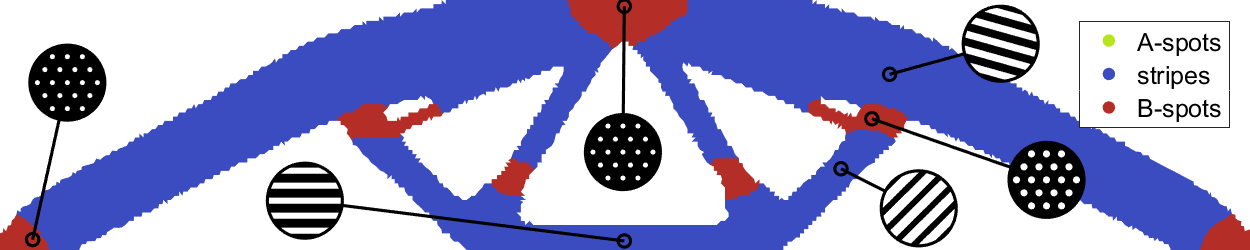}
	\\	\vspace{.2cm}
	\includegraphics[width=.6\textwidth]{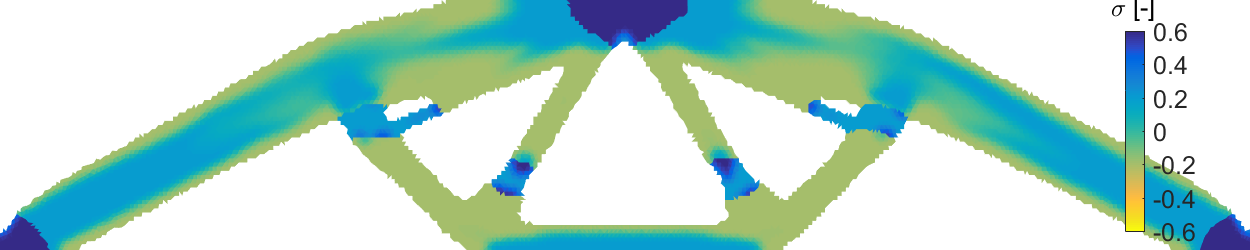}
	\\	\vspace{.2cm}
	\includegraphics[width=.6\textwidth]{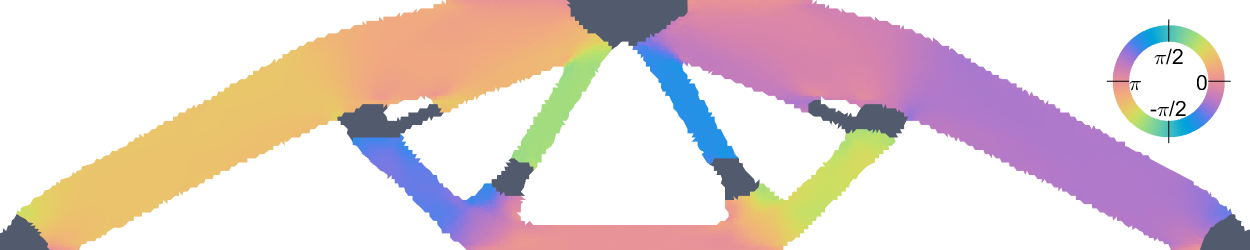}
	\caption{Numerical results of the generalized problem~\eqref{eqn:pb_fem_generalized} for the test \textit{MBB beam}, with the following set of parameters: stiffness ratio $E^A/E^B=10$, domain size $20 \times 4$, filtering radius $0.15$. From top to bottom: allocation of microstructures, variable $\CHm$, variable $\theta$.}
	\label{fig:PBCHO_Overbridge}
\end{figure}
\begin{figure}
	\centering
	\includegraphics[width=.32\textwidth]{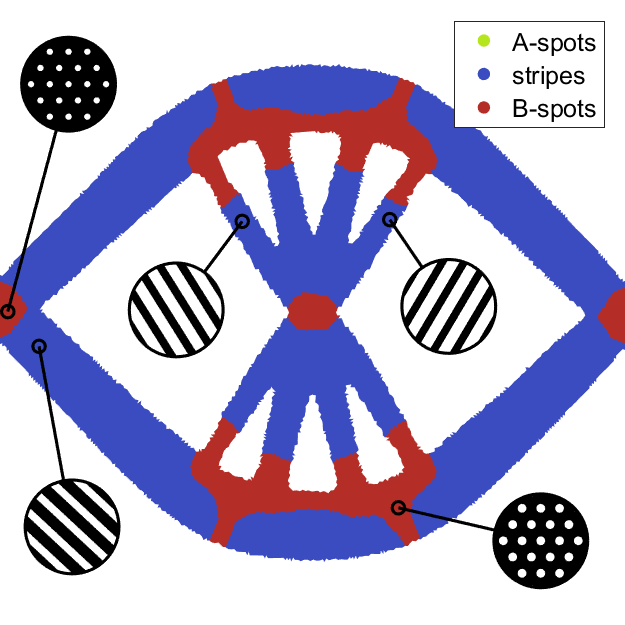}
	\includegraphics[width=.32\textwidth]{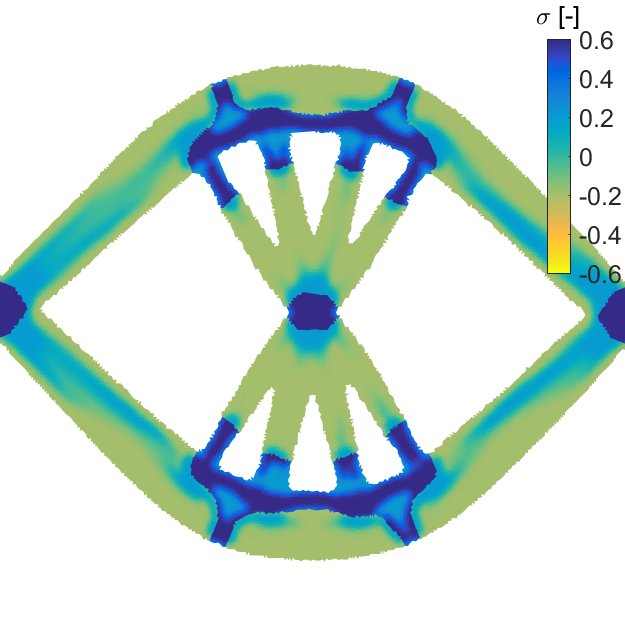}
	\includegraphics[width=.32\textwidth]{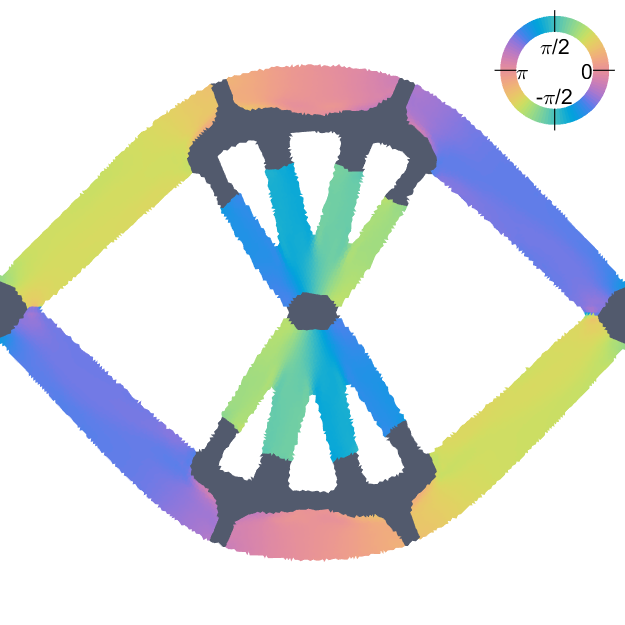}
	\caption{Numerical results of the generalized problem~\eqref{eqn:pb_fem_generalized} for the test \textit{square}, with the following set of parameters: stiffness ratio $E^A/E^B=10^3$, domain size $8 \times 8$, filtering radius $0.1$. From left to right: allocation of microstructures, variable $\CHm$, variable $\theta$.}
	\label{fig:PBCHO_Square}
\end{figure}
Once again, the solution is typically a truss structure. Moreover, inside beams, anisotropic stripes microstructures aligned with the beam itself result to be the most convenient; at the junction points instead, where there is not any clear preferential direction, isotropic spots configurations are chosen. 

We notice that inside regions with the same microstructure type (e.g. B-spots), the variable $\CHm$ may feature different optimal values. For instance, in Fig.~\ref{fig:PBCHO_Overbridge}, B-spots regions located at the junctions between beams report a lower value of $\CHm$ (i.e. lower stiffness) compared to the B-spots regions located near the application point of force and near the supports, where the structure has to bear a more demanding load. Indeed, from the enlargements in the first graph of Fig.~\ref{fig:PBCHO_Overbridge}, one can see that the size of the spots of material $B$ is smaller in this latter case. Similar comments apply for the stripes regions, where the relative size of $A$-stripes with respect to $B$-stripes can vary, as is is evident from the enlargements.


Remarkably, in all the test case we performed, the final design featured the presence of stripes and $B$-spots, but never of $A$-spots.
To give an heuristic explanation of this result, we notice that the optimal design is typically a truss structure, so for the sake of explanation we focus on a single beam and we consider the benchmark problem where one has to choose the optimal material for a single beam, subject to a traction load. In this case the solution can be found analytically. Indeed, consider a rectangular beam, oriented as the axis $x$, of length $L$ and cross-sectional area $A$ (notice that in 2D the area actually a length), fixed at one side ($x = 0$) and forced by a traction load $F \, \vec{e}_x$ at the other side ($x = L$). We suppose that the load is uniformly distributed on the area of application, so that the load per unit area is $\vec{t} = F/A \, \vec{e}_x$. If we suppose that the material is such that $\tensel_{xxxy} = \tensel_{yyxy} = 0$ (this is the case of orthotropic materials), it is easily shown that the equilibrium displacement in $x$ direction is $u_x(\vec{x}) = \frac{F}{A E} x$, where $E = \tensel_{xxxx} - \tensel_{xxyy}/\tensel_{yyyy}$. Since total mass of the beam is given by $\mass = \rho LA$, the area attaining the mass constraint $\mass = \masslim$ is $A = \masslim / (\rho L)$. The compliance is thus given by 
\begin{equation*}
\compliance = A \vec{u}|_{x=L} \cdot \vec{t} = \frac{F^2 L}{A E} = \frac{F^2 L^2}{\masslim} \left(\frac{E}{\rho}\right)^{-1}\, .
\end{equation*}
Therefore, the compliance is minimized when the $E/\rho$ ratio is maximal.

This example suggests that, due to the mass constraint, the quantity to be maximized is not the stiffness $E$, but rather the $E/\rho$ ratio. Incidentally, this observation allows to give an explanation about how the SIMP method works. Indeed, for the standard monomaterial SIMP method, denoting by $z$ the dimensionless density, we have $E = z^p E_0$ and $\rho = z\,\rho_0$. Therefore the ratio $E/\rho = z^{p-1} E_0/\rho_0$ is maximized by $z=1$, whenever $p>1$, while without the SIMP penalization (i.e. with $p=1$) the ratio does not depend on $z$, so grey areas are not penalized.

Hinging upon the previous discussion, we come back to the diblock copolymers case and consider Figure~\ref{fig:homogenized_entries_norm}, which shows the values of the entries of the homogenized stiffness tensor, normalized by the density $\rho$, in dependence on $\CHm$. For both the $E^A/E^B$ ratios considered, the normalized stiffness of the principal direction is maximal for the stripes configurations, which explains the choice of this type of microstructure inside beams, where there is a preferential direction for stress and strain. At the junction between beams, instead, the stiffness associated to each direction plays a significant role, so the stripes configurations are penalized for their low $E/\rho$ ratio in the direction orthogonal to the principal one, and the $B$-spots configurations result to be the most convenient. We notice that the $B$-spots configurations are always preferred to the $A$-spots ones, since they feature a better $E/\rho$ ratio in each direction.

\begin{figure}
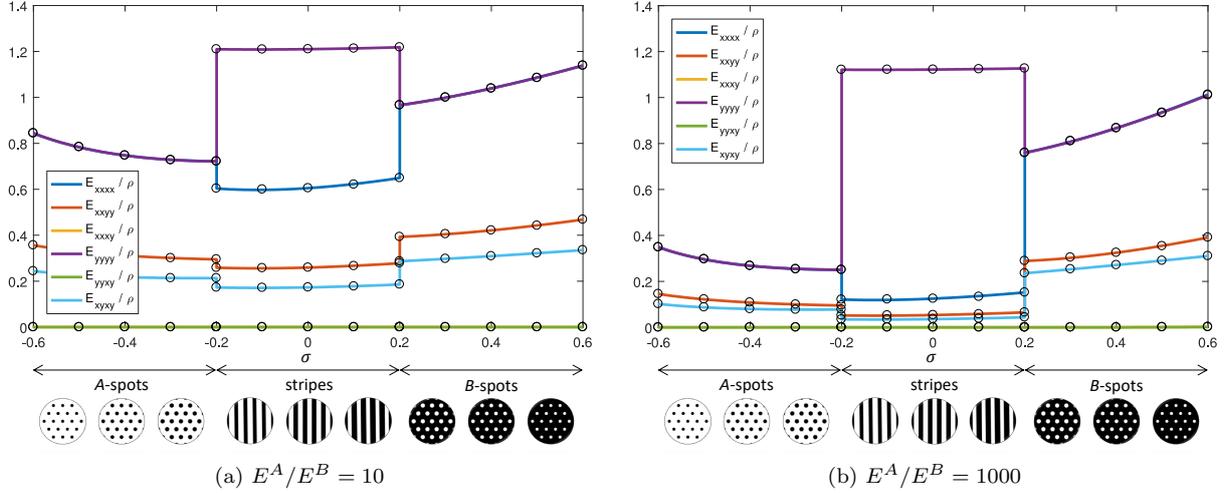

	\centering
	\subfloat[][$E^A/E^B=10$]  {\includegraphics[width=.5\textwidth,page = 3,trim={1.65cm 1.7cm 1.25cm 0},clip]{homogenized_tensor.pdf}} \hfill
	\subfloat[][$E^A/E^B=1000$]{\includegraphics[width=.5\textwidth,page = 4,trim={1.65cm 1.7cm 1.25cm 0},clip]{homogenized_tensor.pdf}} \\ 
	\caption{Entries of the homogenized stiffness tensor normalized by the density $\rho$, for different ratios $E^A/E^B$ (reported in the tagline), in dependence of $\CHm$. As in Figure~\ref{fig:homogenized_entries}, circles represent the values computed numerically through the process described in Section~\ref{sec:applicationtodiblockcopolymers:homo}, see Eq.~\eqref{eqn:twoscales_model_smart}, while lines are the three-points polynomial interpolators.}
	\label{fig:homogenized_entries_norm}
\end{figure}

%
%

In Figure~\ref{fig:PBCHO_Lshape} the influence of the $E^A/E^B$ ratio on the optimal design is studied on the test case (b). The same test case is considered twice, the first time with $E^A/E^B = 10$, the second time with $E^A/E^B  = 10^3$. We notice that the optimal design is quite robust to changes in the quantity $E^A/E^B$, even if $A$-spots microstructures are slightly more convenient, compared to stripe configurations, when the ratio $E^A/E^B$ is high. This fact can be explained looking again at Figure~\ref{fig:homogenized_entries_norm}: 
while for $E^A/E^B = 10$ stripes configurations feature significant values of the $E/\rho$ ratio also in the non-preferential direction, for $E^A/E^B = 10^3$ the degree of anisotropy of stripes configurations is more pronounced, so that such microstructures are very complaint in the non-preferential direction. On the other hand, the stiffness of $A$-spots configurations is not affected so much by the value of the $E^A/E^B$ ratio. Therefore, stripes configurations turn out to be more competitive for high values of the $E^A/E^B$ ratio.


%

\begin{figure}[h]
	\centering
	\includegraphics[width=.32\textwidth]{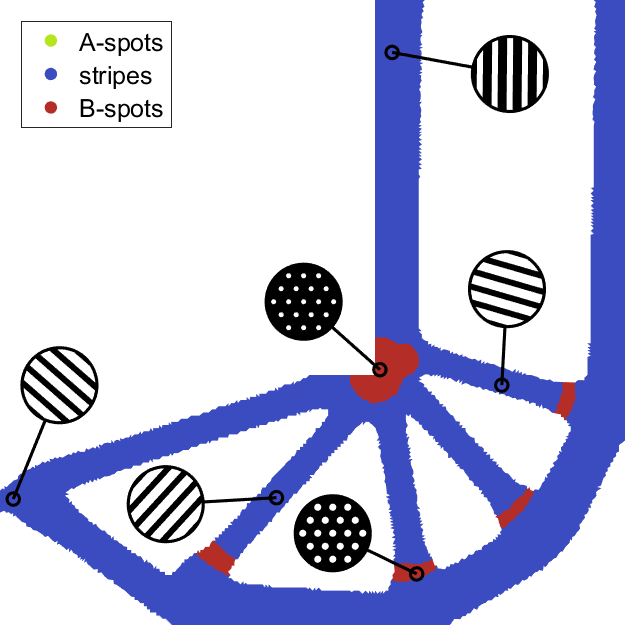}
	\includegraphics[width=.32\textwidth]{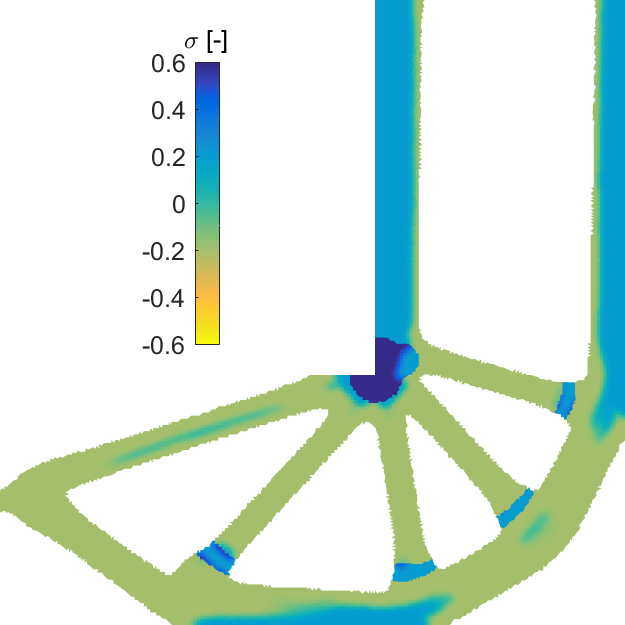}
	\includegraphics[width=.32\textwidth]{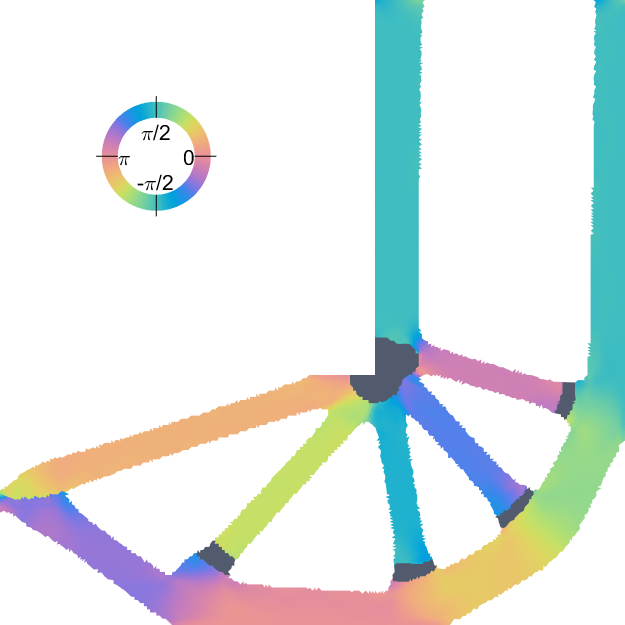}
	\\	\vspace{.2cm}
	\includegraphics[width=.32\textwidth]{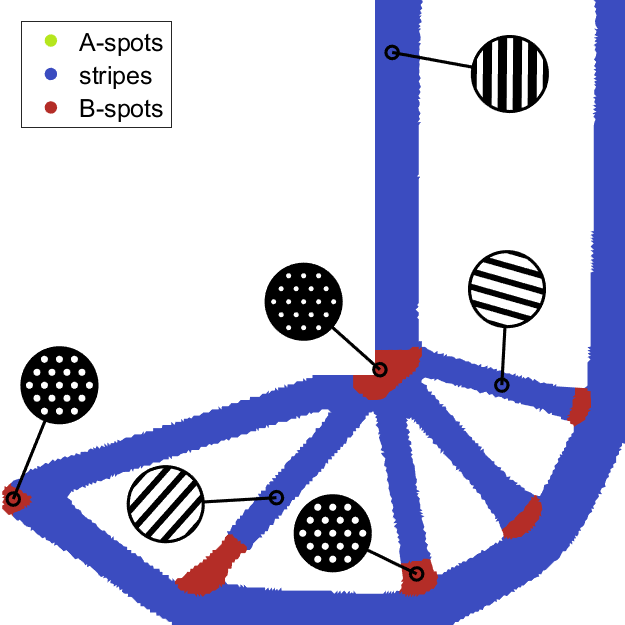}
	\includegraphics[width=.32\textwidth]{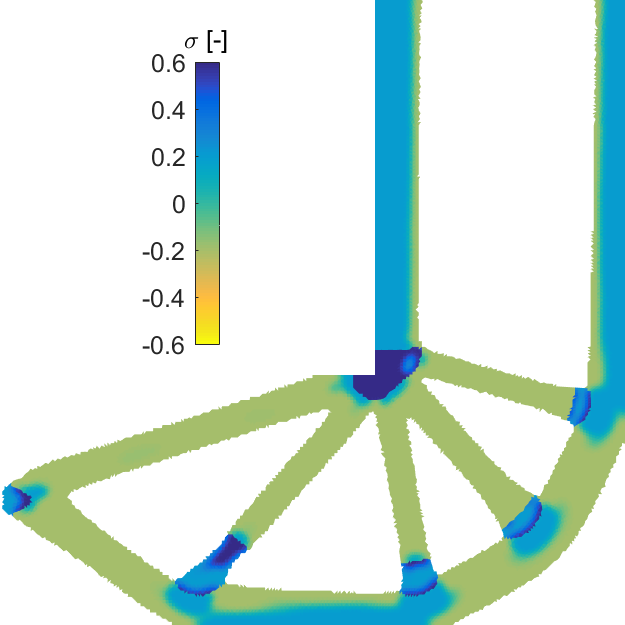}
	\includegraphics[width=.32\textwidth]{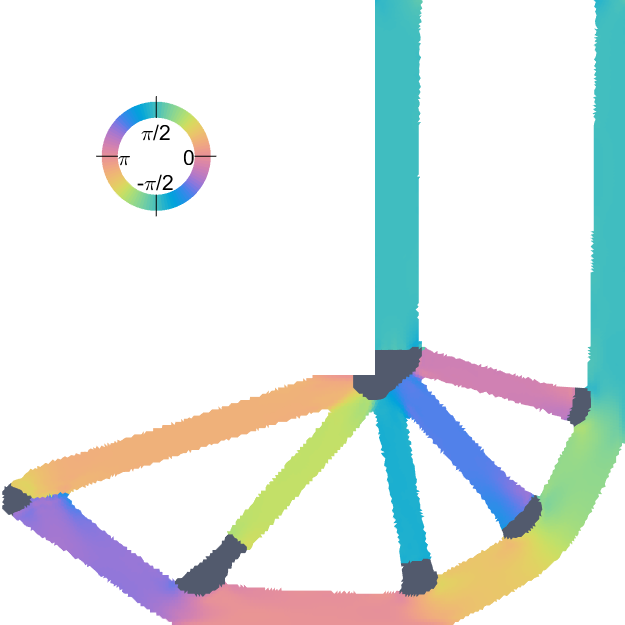}
	\caption{Numerical results of the generalized problem~\eqref{eqn:pb_fem_generalized} for the test \textit{L-shape} with different ratios of stiffness (First row: $E^A/E^B = 10$. Second row: $E^A/E^B = 10^3$). Domain size $8 \times 8$, filtering radius $0.1$. From left to right: allocation of microstructures, variable $\CHm$, variable $\theta$.}
	\label{fig:PBCHO_Lshape}
\end{figure}

\section{Conclusions} \label{sec:conclusions}

We considered an Optimality Criteria algorithm for the solution of a multimaterial version of the minimum compliance TopOpt problem, which consists in finding the optimal allocation of $N$ given possibly anisotropic materials inside a reference domain, given the total mass. 

As a novel contribution, we have then generalized the algorithm to a more general class of problems, where the candidate materials are allowed to change their orientation, which should be optimized, and their properties parametrized by a scalar variable, which is subject to optimization too. In this formulation the local properties of the materials and the macroscopic shape and topology of the body are simultaneously optimized, in order to maximize the stiffness (i.e. minimize its compliance) of the body. Moreover, we have shown well-posedness of this problem, by proving the existence of a minimizer. We have also proved well-definition of the proposed algorithm, in both the basic and the generalized case.

After recalling how the phase separation process of diblock copolymers, a class of self assembling materials, can be modelled, we have shown how the problem of controlling this process, in order to manufacture optimized bodies, can be formulated in terms of the previously introduced generalized multimaterial TopOpt problem. With this aim we have considered a two scales formulation, based on the homogenization theory, which allows to give a macroscopic characterization of the self-assembled microstructures of diblock copolymers.

Numerical results displaying the capabilities of the proposed method have been reported and commented.

We remark that, despite in this work only a two-dimensional implementation has been carried out, the performed analysis can be applied to the three dimensional case as well. The extension of the model to three-dimensional problems involving diblock copolymers would be particularly interesting, since the phase plane of diblock copolymers in 3D is richer that in 2D (lamellae, gyroids, cylinders and spheres patterns are present, see e.g. \cite{choksi2009_3d}), and thus a wider variety of microstructures with different anisotropy features would compete each other. 


We remark that the interest for diblock copolymers falls beyond structural applications, and the presented methodology can be generalized by considering other functional to be optimized, concerning, for instance, the thermal, magnetic or optical properties of the materials.

\section{Acknowledgements}

We gratefully thank Maurizio Grasselli for the interesting and useful discussions about the modelling of diblock copolymers phase separation.

\begin{appendices}
	\section{Proof of Theorem~\ref{thm:topopt_exists}} \label{app:proof_topopt_exists}	
	
	In order to prove Theorem~\ref{thm:topopt_exists} we need the following lemma.
	
	\begin{lemma}
		\label{lemma:weakstarbounds}
		Given a domain $\Omega \subset \mathbb{R}^d$, consider a sequence $\{\phi_j\}_{j \in \mathbb{N}} \subset L^{\infty}(\Omega)$, such that:
		\begin{equation*}
		\begin{split}
		a \leq \phi_j(\vec{x}) \leq b \qquad &\text{a.e. in $\Omega$}\\
		\phi_j \overset{*}{\rightharpoonup} \phi \qquad &\text{in $L^{\infty}(\Omega)$}
		\end{split}
		\end{equation*}
		for some $a,b \in \mathbb{R}$ and $\phi \in L^{\infty}(\Omega)$. Then:
		\begin{equation*}
		a \leq \phi(\vec{x}) \leq b \qquad \text{a.e. in $\Omega$.}
		\end{equation*}
	\end{lemma}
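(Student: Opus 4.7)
The plan is to use the defining property of weak-$*$ convergence in $L^{\infty}(\Omega)$, namely that $\int_{\Omega} \phi_j \psi \dx \to \int_{\Omega} \phi \psi \dx$ for every $\psi \in L^1(\Omega)$, together with a well-chosen family of indicator-type test functions. I would treat the two inequalities separately, showing only $\phi(\vec{x}) \geq a$ a.e.\ (the argument for $\phi(\vec{x}) \leq b$ is completely analogous, obtained by applying the same reasoning to $-\phi_j$ with bounds $-b \leq -\phi_j \leq -a$).

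To prove the lower bound, I would proceed by contradiction and suppose that the set $E := \{\vec{x} \in \Omega : \phi(\vec{x}) < a\}$ has positive Lebesgue measure. Decomposing $E = \bigcup_{n \in \mathbb{N}} E_n$ with $E_n := \{\vec{x} \in \Omega : \phi(\vec{x}) \leq a - 1/n\}$, continuity of measure from below yields an index $n$ such that $|E_n| > 0$. Since $\Omega$ has finite measure in the applications of interest (and in any case one can intersect $E_n$ with a ball to make it have finite positive measure), the function $\psi := \indicatornoarg{E_n}$ belongs to $L^1(\Omega)$ and can serve as a legitimate test function against the weak-$*$ converging sequence.

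The key computation is then the following. From the pointwise bound $\phi_j \geq a$ a.e.\ we obtain, for every $j$,
\begin{equation*}
\int_{E_n} \phi_j(\vec{x}) \dx \geq a\, |E_n|,
\end{equation*}
and the weak-$*$ convergence $\phi_j \overset{*}{\rightharpoonup} \phi$ tested against $\psi = \indicatornoarg{E_n} \in L^1(\Omega)$ allows to pass to the limit in this inequality, yielding
\begin{equation*}
\int_{E_n} \phi(\vec{x}) \dx \geq a\, |E_n|.
\end{equation*}
On the other hand, by the very definition of $E_n$ we have $\phi(\vec{x}) \leq a - 1/n$ a.e.\ on $E_n$, so that $\int_{E_n} \phi \dx \leq (a - 1/n)|E_n| < a\, |E_n|$, contradicting the previous estimate. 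Hence $|E| = 0$, i.e.\ $\phi(\vec{x}) \geq a$ a.e.\ in $\Omega$.

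There is no real obstacle in this argument: the only mildly delicate points are ensuring that the sets $E_n$ are measurable (which follows from the measurability of the representative $\phi$) and that the chosen indicator test function lies in $L^1(\Omega)$ (which holds whenever $\Omega$ has finite measure, as is the case in Theorem~\ref{thm:topopt_exists}; otherwise one replaces $E_n$ by $E_n \cap B_R$ for a suitable ball $B_R$). Applying the same reasoning with the roles of $a$ and $b$ interchanged, i.e.\ to the set $\{\vec{x} : \phi(\vec{x}) > b\}$ decomposed via the thresholds $b + 1/n$, completes the proof.
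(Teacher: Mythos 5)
Your proof is correct, but it takes a genuinely different route from the paper's. The paper proves the lemma in three lines by invoking the lower semi-continuity of the norm with respect to weak-$*$ convergence: since $0 \leq \phi_j - a \leq b-a$ a.e., one has $\phi(\vec{x}) - a \leq \|\phi - a\|_{L^\infty(\Omega)} \leq \liminf_j \|\phi_j - a\|_{L^\infty(\Omega)} \leq b-a$ for a.e.\ $\vec{x}$, which gives $\phi \leq b$, and the symmetric computation with $b - \phi_j$ gives $\phi \geq a$. (Note the shift by $a$, resp.\ $b$, is exactly what turns the two-sided sup-norm bound into the desired one-sided pointwise bound.) You instead argue by duality: you test the weak-$*$ convergence against indicator functions of the sublevel sets $E_n = \{\phi \leq a - 1/n\}$ and derive a contradiction from $\int_{E_n}\phi \geq a|E_n|$ versus $\int_{E_n}\phi \leq (a-1/n)|E_n|$. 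Your argument is more elementary and self-contained — it uses only the definition of weak-$*$ convergence and continuity of the Lebesgue measure from below, rather than an abstract semicontinuity result quoted from a functional-analysis text — at the cost of being somewhat longer and requiring the small bookkeeping about measurability of $E_n$ and integrability of the indicator (which you handle correctly by intersecting with a ball when $|\Omega| = \infty$; in the paper's application $\Omega$ is bounded, so this is moot). Both proofs are complete and correct; the paper's is shorter, yours exposes the mechanism more explicitly.
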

	
	\begin{proof}
		By lower semi-continuity of the norm with respect to $\text{weak}^*$ convergence (see~\cite{brezis2010functional}), we have, for a.a. $\vec{x} \in \Omega$:
		\begin{equation*}
		\phi(\vec{x}) - a \leq \| \phi -a\|_{L^{\infty}(\Omega)} \leq \underset{j}{\liminf} \| \phi_j -a\|_{L^{\infty}(\Omega)} \leq b-a\, ,
		\end{equation*}
		which entails $\phi(\vec{x}) \leq b$. On the other hand:
		\begin{equation*}
		b - \phi(\vec{x}) \leq \| b - \phi\|_{L^{\infty}(\Omega)} \leq \underset{j}{\liminf} \| b - \phi_j\|_{L^{\infty}(\Omega)} \leq b-a\, ,
		\end{equation*}	
		whence the second inequality.
	\end{proof}
		
		
	\begin{proof}[Proof of Theorem~\ref{thm:topopt_exists}]
		In the following we will denote by $\control = (z_\idxmat, \matm_\idxmat, \theta_\idxmat)_{\idxmat=1,\dots,N}$ an admissible design, and by $\controlset$ the space of admissible designs:
		\begin{equation*}
		\controlset = \left\{(z_\idxmat, \matm_\idxmat, \theta_\idxmat)_{\idxmat=1,\dots,N} : 
		\begin{split}
		z_\idxmat \in L^{\infty}(\Omega; [z_{min},1])  \qquad & \idxmat=1,\dots,N \\
		\matm_\idxmat \in L^{\infty}(\Omega; [\underline{\matm}_\idxmat,\overline{\matm}_\idxmat]) \qquad & \idxmat=1,\dots,N \\
		\theta_\idxmat \in L^{\infty}(\Omega; [0,2\pi)) \qquad & \idxmat=1,\dots,N \\
		\sum_{\idxmat=1}^{N} z_\idxmat(\vec{x}) \leq 1 \qquad &\forall\, \vec{x} \in \Omega \\
		\sum_{\idxmat=1}^{N} \int_{\Omega} \hat{z}_\idxmat(\vec{x}) \rho(\hat{\matm}_\idxmat(\vec{x})) \dx&\leq \masslim
		\end{split}
		\right\} \, .
		\end{equation*}	
		We denote by $\compliance(\control)$ the compliance $l(\vec{u})$ associated with the design $\control$. Let $\{\control_j\}_{j \in \mathbb{N}} \subset \controlset$ be a minimizing sequence for $\compliance$ in $\controlset$. Since the space $\controlset$ is a bounded subset of the space $L^{\infty}(\Omega;\mathbb{R}^{3N})$, by Banach-Alaoglu-Bourbaki theorem there exists a subsequence of $\{\control_{j}\}_j$ (which we do not relabel) and an $\control^\star \in \controlset$ such that $\control_{j} \overset{*}{\rightharpoonup} \control^\star$ in $L^{\infty}(\Omega;\mathbb{R}^{3N})$ as $j \rightarrow +\infty$. By definition of $\text{weak}^*$ convergence, since $K	\in L^1(\Omega)$, the sequence of filtered controls converges pointwise to the filtered counterpart of $\control^\star$ as $j \rightarrow +\infty$: 
		\begin{equation*}
		\hat{\control}_{j} (\vec{x})
		= \frac{\int_{\Omega} \control_{j}(\vec{z})  {K}(\vec{x} - \vec{z}) \diff{\vec{z}}}{\left({K} * 1 \right) (\vec{x})}
		\longrightarrow		 
		\frac{\int_{\Omega} \control^\star(\vec{z})  {K}(\vec{x} - \vec{z}) \diff{\vec{z}} }{\left({K} * 1 \right) (\vec{x})} 
		= \hat{\control}^\star(\vec{x})  \qquad \forall \vec{x} \in \Omega\, .
		\end{equation*}	
		Thanks to the regularization properties of the convolution operator it is possible to prove more than pointwise convergence. Indeed, consider the filtered sequence $\{\hat{\control}_{j}\}_j$. It satisfies the properties of:
		\begin{itemize}
			\item \textbf{equiboundedness}, since, by Lemma~\ref{lemma:weakstarbounds}, the filtered variables $\hat{\control}_{j}$ satisfy the same bounds of their non-filtered counterparts;
			\item \textbf{equicontinuity}, since for any $\vec{x},\vec{x}' \in \Omega$ and $j \in \mathbb{N}$, denoting by $L_K$ the Lipschitz constant of $K$:
			\begin{align*}
			\left\| \hat{\control}_{j} (\vec{x}) - \hat{\control}_{j} (\vec{x}')\right\|_{\mathbb{R}^{3N}} 
			&=\left\| 
			\frac{\int_{\Omega} {\control}_{j} (\vec{z})  {K}(\vec{x}  - \vec{z}) \diff{\vec{z}}}{\int_{\Omega} {K}(\vec{x}  - \vec{w}) \diff{\vec{w}}} -
			\frac{\int_{\Omega} {\control}_{j} (\vec{z})  {K}(\vec{x}' - \vec{z}) \diff{\vec{z}}}{\int_{\Omega} {K}(\vec{x}' - \vec{w}) \diff{\vec{w}}}
			\right\|_{\mathbb{R}^{3N}} \\
			&=\left\| 
			\frac{\int_{\Omega}\int_{\Omega} {\control}_{j} (\vec{z}) \left[ 
				{K}(\vec{x} - \vec{z}){K}(\vec{x}' - \vec{w})-{K}(\vec{x}' - \vec{z}){K}(\vec{x} - \vec{w})	
				\right]\diff{\vec{z}}\diff{\vec{w}}}
			{\left(\int_{\Omega} {K}(\vec{x} - \vec{w}) \diff{\vec{w}}\right)\left(\int_{\Omega} {K}(\vec{x}' - \vec{w}) \diff{\vec{w}}\right)}
			\right\|_{\mathbb{R}^{3N}} \\
			&\leq \frac{1}{K_{min}^2}\Bigl\| \int_{\Omega}  {\control}_{j} (\vec{z})  \int_{\Omega}  
			\left[K(\vec{x} - \vec{z})\left(K(\vec{x}' - \vec{w})-K(\vec{x} -  \vec{w}) \right) \right.\Bigr.
			\\*&\qquad\qquad\qquad\qquad\qquad \Bigl.\left.+K(\vec{x} - \vec{w})\left(K(\vec{x}  - \vec{z})-K(\vec{x}' - \vec{z}) \right)\right]
			\diff{\vec{w}}\diff{\vec{z}} \Bigr\|_{\mathbb{R}^{3N}} \\
			&\leq \frac{1}{K_{min}^2} \int_{\Omega}   
			\underbrace{\|{\control}_{j} (\vec{z}) \|_{\mathbb{R}^{3N}}}_{\leq \|{\control}_{j} \|_{L^{\infty}(\Omega;\mathbb{R}^{3N})}} 
			\Bigl[K(\vec{x} - \vec{z}) \int_{\Omega}\underbrace{\left|K(\vec{x}' - \vec{w})-K(\vec{x} -  \vec{w}) \right|}_{\leq L_K \|\vec{x}-\vec{x}'\|} \diff{\vec{w}} \Bigr.
			\\*&\qquad \Bigl.+ \int_{\Omega}K(\vec{x} - \vec{w})\diff{\vec{w}} \underbrace{\left|K(\vec{x}' - \vec{z})-K(\vec{x} -  \vec{z}) \right|}_{\leq L_K \|\vec{x}-\vec{x}'\|}\Bigr]
			\diff{\vec{z}} \\
			&\leq \frac{2 \|{\control}_{j} \|_{L^{\infty}(\Omega;\mathbb{R}^{3N})} \|K \|_{L^1(\mathbb{R}^d)}|\Omega| L_K}{K_{min}^2} \|\vec{x}-\vec{x}'\|\, .
			\end{align*}					
		\end{itemize}		
		Then, by Ascoli-Arzel\`{a} theorem, there exists a further subsequence of $\{\hat{\control}_{j}\}$ (which again we do not relabel) uniformly convergent to some limit. Since uniform convergence entails pointwise convergence, and by uniqueness of the limit, the limit coincides with $\hat{\control}^\star$. Therefore $\hat{\control}_{j} \rightarrow \hat{\control}^\star$ in $L^{\infty}(\Omega;\mathbb{R}^{3N})$ as $j \rightarrow +\infty$.
	
		We will show that the design-to-solution map $\hat{\control} \mapsto \vec{u}$ is continuous from the space $\controlset$ (endowed with the $L^{\infty}(\Omega;\mathbb{R}^{3N})$ topology) to $H^1_{\Gamma_D}(\Omega;\mathbb{R}^d)$. Then, thanks to the continuity of the linear form $l(\cdot)$ in $H^1_{\Gamma_D}(\Omega;\mathbb{R}^d)$, it follows that $\compliance(\control_{j}) \rightarrow \compliance(\control^\star)$ as $j \rightarrow +\infty$, which entails $\compliance(\control^\star) = \inf_{\control \in \controlset}\compliance(\control)$, since the sequence $\{\control_{j}\}_j$ is minimizing. We notice that $\control^\star$ is also admissible. Indeed by Lemma~\ref{lemma:weakstarbounds} the upper and lower bounds of the design variables are transferred to their $\text{weak}^*$ limits (the same holds true for $\sum_{\idxmat=1}^{N} z_\idxmat$), and by uniform convergence of the filtered variables it is possible to pass to the limit into the mass constraint. Therefore we have $\control^\star \in \controlset$.
		
		We are left to show the continuity of the design-to-solution map. First of all, we show that the bilinear forms associated with filtered designs in the space $\controlset$ are equicoercive. Consider a filtered design $\hat{\control} = (\hat{z}_\idxmat, \hat{\matm}_\idxmat, \hat{\theta}_\idxmat)_{\idxmat=1,\dots,N} \in \controlset$, and the associated stiffness tensor $\tensel(\vec{x})$. Thanks to the lower bound on the variables $z_\idxmat$ and Korn's inequality
		, the equicoercivity property is transferred to the bilinear form $a(\cdot,\cdot)$:
		\begin{equation*}
		\begin{split}
		a(\vec{u},\vec{u}) 
		& = \int_{\Omega} \sum_{\idxmat=1}^{N} {\underbrace{\hat{z}_\idxmat(\vec{x})}_{\geq z_{min}}}^p 
		\underbrace{\left( \Qrot(\hat{\theta}_\idxmat(\vec{x})) \tensel_\idxmat(\hat{\matm}_\idxmat(\vec{x})) \right) \symgrad{\vec{u}(\vec{x})} : \symgrad{\vec{u}(\vec{x})}}_{\geq \alpha \left| \symgrad{\vec{u}(\vec{x})} \right|^2} \dx \\
		&\geq N z_{min}^p \alpha \left\| \symgrad{\vec{u}} \right\|^2_{L^2} \geq  N z_{min}^p \alpha C_K \left\| {\vec{u}} \right\|^2_{H^1}\, ,
		\end{split}
		\end{equation*}
		where the constant $\bar{\alpha} := N z_{min}^p \alpha C_K $ does not depend on $\hat{\control}$.	
		
		Consider therefore two filtered designs $\hat{\control}_1, \hat{\control}_2 \in \controlset$, and the associated bilinear forms $a_1(\cdot,\cdot)$ and $a_2(\cdot,\cdot)$. The corresponding states $\vec{u}_1$ and $\vec{u}_2$ satisfy:
		\begin{equation*}
		\begin{split}
		a_1(\vec{u}_1,\vec{v}) &= l(\vec{v}) \qquad \forall \, \vec{v} \in H^1_{\Gamma_D} \, ,\\
		a_2(\vec{u}_2,\vec{v}) &= l(\vec{v}) \qquad \forall \, \vec{v} \in H^1_{\Gamma_D} \, .\\
		\end{split}
		\end{equation*}		
		By subtracting the previous equations, adding and subtracting $a_1(\vec{u}_2,\vec{v})$, and choosing $\vec{v} = \vec{u}_2 - \vec{u}_1$, we get:
		\begin{equation}
		\label{eqn:existencethm_chain_ineq}
		\begin{split}
		\underbrace{a_1(\vec{u}_2 - \vec{u}_1,\vec{u}_2 - \vec{u}_1)}_{\geq \bar{\alpha} \left\| \vec{u}_2 - \vec{u}_1 \right\|^2_{H^1} }
		&= a_1(\vec{u}_2,\vec{u}_2 - \vec{u}_1) - a_2(\vec{u}_2,\vec{u}_2 - \vec{u}_1) \\
		&= \int_{\Omega} \left(\tensel_1 - \tensel_2\right) \symgrad{\vec{u}_2} : \symgrad{(\vec{u}_2 - \vec{u}_1)} \dx \\
		&\leq \left\| \tensel_1 - \tensel_2 \right\|_{L^{\infty}}   \left\| \symgrad{\vec{u}_2} \right\|_{L^2}   \left\| \symgrad{(\vec{u}_2 - \vec{u}_1)} \right\|_{L^2} \, .
		\end{split}
		\end{equation}		
		The last two factors of the last line of~\eqref{eqn:existencethm_chain_ineq} can be estimated as follows:
		\begin{itemize}
			\item $ \left\| \symgrad{\vec{u}_2} \right\|_{L^2} \leq \frac{1}{\bar{\alpha}}  \left\| l \right\|_{H^{-1}}$ by Lax-Milgram lemma;
			\item $\left\| \symgrad{(\vec{u}_2 - \vec{u}_1)} \right\|_{L^2} \leq \left\| \vec{u}_2 - \vec{u}_1 \right\|_{H^1}$.
		\end{itemize}
		
		Therefore, plugging the estimates into~\eqref{eqn:existencethm_chain_ineq} we conclude:
		\begin{equation}
		\label{eqn:proofexist_estimate}
		\left\| \vec{u}_2 - \vec{u}_1 \right\|_{H^1} \leq \frac{M \left\| l \right\|_{H^{-1}}}{\bar{\alpha}^2} \left\| \tensel_1 - \tensel_2 \right\|_{L^{\infty}} \, .
		\end{equation}		
		By the Heine-Cantor Theorem, being the stiffness tensors $\tensel_\idxmat(\matm)$ continuous on the compacts $[\underline{\matm}_\idxmat,\overline{\matm}_\idxmat]$, the functions $\tensel_\idxmat(\matm)$ are indeed \textit{uniformly} continuous. Therefore, the pointwise application
		\begin{equation*}
		\hat{\control} \mapsto \tensel = \sum_{\idxmat=1}^{N} \hat{z}_\idxmat^p \Qrot(\hat{\theta}_\idxmat) \tensel_\idxmat(\hat{\matm}_\idxmat)
		\end{equation*}
		is uniformly continuous from $\mathbb{R}^{3N}$ to $\mathbb{R}^{d^4}$, and thus the map $\hat{\control}(\vec{x}) \mapsto \tensel(\vec{x})$ is continuous from $L^\infty(\Omega;\mathbb{R}^{3N})$ to $L^\infty(\Omega;\mathbb{R}^{d^4})$. Finally, thanks to the estimate~\eqref{eqn:proofexist_estimate}, we can conclude the continuity of the design-to-solution map.
	\end{proof}

	\section{Proof of Proposition~\ref{prop:topopt_multimal_mL_exist_unique}} \label{app:proof_topopt_multimal_mL_exist_unique}

	\begin{proof}[Proof of Proposition~\ref{prop:topopt_multimal_mL_exist_unique}]
				
		
		We denote by $Z_l$ the total local density: $Z_l(\Lambda,\mu_l) = \sum_{\idxmat=1}^{N} z_{\idxmat,l}^{k+1}$. Moreover, in each mesh element $l$, we define the following sets, which track for which indexes $\idxmat$ and in which mesh elements the constraints~\eqref{eqn:pb_fem_generalized:zbounds}, \eqref{eqn:pb_fem_generalized:zsum1} and~\eqref{eqn:pb_fem_generalized:mbounds} are active: 
		\begin{equation*}
		\begin{split}
		\freeIDXzl{l}(\Lambda,\mu_l) &= \{ \idxmat = 1,\dots, N \text{ s.t. }  z_{\idxmat,l}^{k+1} > z_{min} \}  ,\\
		\freeIDXz(\Lambda,\vec{\mu}) &= \{ l = 1,\dots,N_e \text{ s.t. } \freeIDXzl{l} \neq \emptyset \} ,  \\
		\freeIDXzUp(\Lambda,\vec{\mu}) &= \{ l = 1,\dots,N_e \text{ s.t. } Z_l(\Lambda,\mu_l) = 1 \} ,  \\
		\freeIDXml{l}(\Lambda,\mu_l) &= \{ \idxmat = 1,\dots, N \text{ s.t. } \underline{\matm}_{\idxmat} < \matm_{\idxmat,l}^{k+1} < \overline{\matm}_{\idxmat} \}.\\
		\end{split}
		\end{equation*}	
		Notice that by definition $\freeIDXzUp \subset \freeIDXz$. It is easily seen by~\eqref{eqn:FEM_nf_dE} that the tensors $\frac{\partial\vec{K}}{\partial \hat{z}_{\idxmat,r}}$ and $\frac{\partial\vec{K}}{\partial \hat{\matm}_{\idxmat,r}}$ are positive semidefinite. Therefore both the numerators and the denominators of the fractions in~\eqref{eqn:BD_filtered} are not less than $\epsilon$, so we have:
		\begin{equation}
		\begin{aligned}
		& \frac{\partial z_{\idxmat,l}^{k+1}}{\partial \mu_l} = 
		- \frac{ 
			\left(\vec{U}^k\right)^T\left(\sum_{r=1}^{N_e} \hat{H}_{rl}\frac{\partial \tilde{\vec{K}}}{\partial \hat{z}_{\idxmat,r}}\right) \vec{U}^k +\epsilon
		}{
			\left(\Lambda \sum_{r=1}^{N_e} \hat{H}_{rl} |e_r| \rho_\idxmat(\hat{\matm}_{\idxmat,r}^k) + \mu_l +\epsilon\right)^2
		} 
		z_{\idxmat,l}^k < 0 \qquad 
		&& \forall \,\idxmat\in \freeIDXzl{l}, \\
		& \frac{\partial z_{\idxmat,l}^{k+1}}{\partial \Lambda} = 
		\left(\sum_{r=1}^{N_e} \hat{H}_{rl} |e_r| \rho_\idxmat(\hat{\matm}_{\idxmat,r}^k)\right) \frac{\partial z_{\idxmat,l}^{k+1}}{\partial \mu_l} <0\qquad 
		&& \forall \,\idxmat\in \freeIDXzl{l}, \\
		& \frac{\partial \matm_{\idxmat,l}^{k+1}}{\partial \Lambda} = 
		-\frac{ 
			\left(\vec{U}^k\right)^T\left(\sum_{r=1}^{N_e} \hat{H}_{rl}\frac{\partial \vec{K}}{\partial \hat{\matm}_{\idxmat,r}}\right) \vec{U}^k +\epsilon
		}{
			\left(\Lambda  \sum_{r=1}^{N_e} \hat{H}_{rl} |e_r| \rhomprime(\hat{\matm}_{\idxmat,r}^k) \hat{z}_{\idxmat,r}^k  +\epsilon\right)^2
		} \cdot && \\ & \qquad \qquad 
		\left(\sum_{r=1}^{N_e} \hat{H}_{rl} |e_r| \rhomprime(\hat{\matm}_{\idxmat,r}^k) \hat{z}_{\idxmat,r}^k\right)
		\left(\matm_{\idxmat,l}^k - \underline{\matm}_\idxmat + \delta\right)  \leq 0 \qquad 
		&& \forall \,\idxmat\in \freeIDXml{l}. \\
		\end{aligned}
		\end{equation}		
		Fix $\Lambda \geq 0 $ and $l$. It is easily seen that the $Z_l$ is continuous and non increasing in $\mu_l$, being the composition of continuous and non increasing functions, then~\ref{point:topopt_multimat_proof_a} is proved. Moreover, in a neighbourhood of those $\mu_l$ such that the set $\freeIDXzl{l}$ is not empty (i.e. $l \in \freeIDXz$), $Z_l$ is \textit{strictly} decreasing in $\mu_l$. 
		
		To prove~\ref{point:topopt_multimat_proof_b}, we consider the two possibilities:
		\begin{itemize}
			\item $Z_l(\Lambda,0) < 1$. Then $\mu_l^{\Lambda}=0$. This choice is unique, since for any $\mu_l>0$ we have $Z_l(\Lambda,\mu_l)\leq Z_l(\Lambda,0) < 1$.
			\item $Z_l(\Lambda,0) \geq 1$. 
			Since $Z_l \rightarrow N\,z_{min}<1 $ for $\mu_l \rightarrow +\infty$, there exists at least a $\mu_l > 0$ such that $Z_l=1$. The choice in unique, since, for those values of $\mu_l$, the set $\freeIDXzl{l}$ cannot be empty (thanks to~\eqref{eqn:topopt_multimat_nontrivialparam1}), and so $Z_l$ is \textit{strictly} decreasing in a neighbourhood.
		\end{itemize}		
		Therefore, for $l \notin \freeIDXzUp(\Lambda,\vec{\mu}^{\Lambda})$ we have $\mu_l^\Lambda \equiv 0$ in a neighbourhood of that $\Lambda$, and so $\frac{\partial \mu_l^{\Lambda}}{\partial\Lambda} = 0$. On the other hand, for $l \in \freeIDXzUp(\Lambda,\vec{\mu}^{\Lambda})$, the map $(\Lambda, \mu_l) \mapsto Z_l$ is of class $\mathcal{C}^1$, thus by Dini's theorem the correspondence $\Lambda \mapsto {\mu}_l^{\Lambda} $ is $\mathcal{C}^1$ as well, and it holds:
		\begin{equation}
		\label{eqn:topopt_multimat_proof_dmdL}
		\frac{\partial \mu_l^{\Lambda}}{\partial\Lambda}   
		= - \frac{\frac{\partial Z_l}{\partial\Lambda}}{\frac{\partial Z_l}{\partial\mu_l}}
		= - \frac{\sum_{\idxmat\in \freeIDXzl{l}} \frac{\partial z_{\idxmat,l}^{k+1}}{\partial\Lambda}}{\sum_{\idxmat\in \freeIDXzl{l}} \frac{\partial z_{\idxmat,l}^{k+1}}{\partial\mu_l}}
		\qquad
		\forall \, l \in \freeIDXzUp(\Lambda,\vec{\mu}^{\Lambda})\, .
		\end{equation}	
		Consider now the map $\Lambda \mapsto \mass^{k+1}(\Lambda,\vec{\mu}^{\Lambda})$: it is continuous, being the composition of continuous functions. Moreover:
		\begin{equation*}
		\frac{d M^{k+1}}{d \Lambda} 
		= 
		\underbrace{
			\sum_{\idxmat=1}^{N} \sum_{r=1}^{N_e} |e_r| \frac{d \hat{z}_{\idxmat,r}^{k+1}}{d \Lambda}  \rho_\idxmat(\hat{\matm}_{\idxmat,r}^{k+1})
		}_{(I)} 
		+ 
		\underbrace{
			\sum_{\idxmat=1}^{N} \sum_{r=1}^{N_e} |e_r| \hat{z}_{\idxmat,r}^{k+1} \rhomprime(\hat{\matm}_{\idxmat,r}^{k+1}) \frac{d \hat{\matm}_{\idxmat,r}^{k+1}}{d \Lambda} 
		}_{(II)} \, .
		\end{equation*}
		We treat the two terms separately. Denoting $\rho_\idxmat(\hat{\matm}_{\idxmat,r}^{k+1})$ simply by $\rho_\idxmat$ we have:
		\begin{align*}
		(I) 
		&=
		\sum_{\idxmat=1}^{N} \sum_{r=1}^{N_e} |e_r| \sum_{l=1}^{N_e} \hat{H}_{rl} \frac{d z_{\idxmat,l}^{k+1}}{d \Lambda}  \rho_\idxmat
		\\
		&=
		\sum_{l \in \freeIDXzUp} \sum_{\idxmat \in \freeIDXzl{l}} \sum_{r=1}^{N_e} |e_r| \hat{H}_{lr} \left(\frac{\partial z_{\idxmat,l}^{k+1}}{\partial \Lambda} + \frac{\partial z_{\idxmat,l}^{k+1}}{\partial \mu_l} \frac{\partial \mu^\Lambda_l}{\partial \Lambda} \right)  \rho_\idxmat 
		+ 
		\sum_{l \in \freeIDXz \setminus \freeIDXzUp} \sum_{\idxmat \in \freeIDXzl{l}} \sum_{r=1}^{N_e} |e_r| \hat{H}_{lr} \underbrace{\frac{\partial z_{\idxmat,l}^{k+1}}{\partial \Lambda}}_{{}<0} \rho_\idxmat
		\\
		&\leq 
		\sum_{l \in \freeIDXzUp} \left(\sum_{r=1}^{N_e}  |e_r|  \hat{H}_{rl} \right)	
		\left[
		\sum_{\idxmat\in \freeIDXzl{l}} \rho_\idxmat \left(\sum_{r=1}^{N_e} \hat{H}_{rl} |e_r| \rho_\idxmat\right) \frac{\partial z_{\idxmat,l}^{k+1}}{\partial \mu_l}
		\right.
		\\*  
		& 
		\phantom{\sum_{l \in \freeIDXzUp} \left(\sum_{r=1}^{N_e}  |e_r|  \hat{H}_{rl} \right)	
			\left[ \right.} 
		\left.
		{}-\left(\sum_{\idxmat\in \freeIDXzl{l}} \rho_\idxmat \frac{\partial z_{\idxmat,l}^{k+1}}{\partial \mu_l^\Lambda} \right)
		\frac{ \sum_{\idxmat\in \freeIDXzl{l}} \left(\sum_{r=1}^{N_e} \hat{H}_{rl} |e_r| \rho_\idxmat\right) \frac{\partial z_{\idxmat,l}^{k+1}}{\partial \mu_l} }
		{ \sum_{\idxmat\in \freeIDXzl{l}} \frac{\partial z_{\idxmat,l}^{k+1}}{\partial \mu_l} } \right] \\
		\\
		&=
		\sum_{l \in \freeIDXzUp}	 \frac{ \left(\sum_{r=1}^{N_e}  |e_r|  \hat{H}_{rl} \right)^2 }{\sum_{\idxmat\in \freeIDXzl{l}} \frac{\partial z_{\idxmat,l}^{k+1}}{\partial \mu_l}} \sum_{\idxmat,j\in \freeIDXzl{l}}
		\left[	
		\rho_j^2 \frac{\partial z_{\idxmat,l}^{k+1}}{\partial \mu_l} \frac{\partial z_{j,l}^{k+1}}{\partial \mu_l}
		-
		\rho_j \rho_\idxmat \frac{\partial z_{\idxmat,l}^{k+1}}{\partial \mu_l} \frac{\partial z_{j,l}^{k+1}}{\partial \mu_l}
		\right]
		\\
		&=
		\sum_{l \in \freeIDXzUp}	 \frac{ \left(\sum_{r=1}^{N_e}  |e_r|  \hat{H}_{rl} \right)^2 }{\sum_{\idxmat\in \freeIDXzl{l}} \frac{\partial z_{\idxmat,l}^{k+1}}{\partial \mu_l}} \left[
		\sum_{\idxmat<j}
		\rho_j (\rho_j-\rho_\idxmat) \frac{\partial z_{\idxmat,l}^{k+1}}{\partial \mu_l} \frac{\partial z_{j,l}^{k+1}}{\partial \mu_l}
		+
		\sum_{j<\idxmat}
		\rho_j (\rho_j-\rho_\idxmat) \frac{\partial z_{\idxmat,l}^{k+1}}{\partial \mu_l} \frac{\partial z_{j,l}^{k+1}}{\partial \mu_l}
		\right]
		\\
		&=
		\sum_{l \in \freeIDXzUp} 	 \frac{ \left(\sum_{r=1}^{N_e}  |e_r|  \hat{H}_{rl} \right)^2 }{\sum_{\idxmat\in \freeIDXzl{l}} \frac{\partial z_{\idxmat,l}^{k+1}}{\partial \mu_l}} \left[
		\sum_{\idxmat<j}
		\rho_j (\rho_j-\rho_\idxmat) \frac{\partial z_{\idxmat,l}^{k+1}}{\partial \mu_l} \frac{\partial z_{j,l}^{k+1}}{\partial \mu_l}
		-
		\sum_{\idxmat<j}
		\rho_\idxmat (\rho_j-\rho_\idxmat) \frac{\partial z_{\idxmat,l}^{k+1}}{\partial \mu_l} \frac{\partial z_{j,l}^{k+1}}{\partial \mu_l}
		\right]
		\\
		&=
		\sum_{l \in \freeIDXzUp} \underbrace{\frac{ \left(\sum_{r=1}^{N_e}  |e_r|  \hat{H}_{rl} \right)^2 }{\sum_{\idxmat\in \freeIDXzl{l}} \frac{\partial z_{\idxmat,l}^{k+1}}{\partial \mu_l}}}_{{}< 0} \left[
		\sum_{\idxmat<j}
		(\rho_j-\rho_\idxmat)^2 \underbrace{\frac{\partial z_{\idxmat,l}^{k+1}}{\partial \mu_l}}_{{}< 0} \underbrace{\frac{\partial z_{j,l}^{k+1}}{\partial \mu_l}}_{{}< 0}
		\right] \leq 0 \, .
		\\
		\end{align*}	
		Notice that the equality holds only in the case the set $\freeIDXz$ is empty. Moreover:
		\begin{equation*}
		(II) 
		=
		\sum_{\idxmat=1}^{N} \sum_{l=1}^{N_e} |e_l| \hat{z}_{\idxmat,l}^{k+1} \rhomprime(\hat{\matm}_{\idxmat,l}^{k+1}) \sum_{r=1}^{N_e} \hat{H}_{lr}\frac{d \matm_{\idxmat,r}^{k+1}}{d \Lambda} \leq 0 \, .
		\end{equation*}	
		Then point~\ref{point:topopt_multimat_proof_c} is proved. We are left to show point~\ref{point:topopt_multimat_proof_d}. We have the following two possibilities:
		\begin{itemize}
			\item $\mass(0,\vec{\mu}^0) < \masslim$. Then $\Lambda=0$ does the job. This choice is unique, since for any $\Lambda>0$ we have $\mass(\Lambda,\vec{\mu}^\Lambda)\leq \mass(0,\vec{\mu}^0) < \masslim$.
			\item $\mass(0,\vec{\mu}^0) \geq \masslim$. In the limit $\Lambda \rightarrow +\infty$, thanks to~\eqref{eqn:topopt_multimat_nontrivialparam2} we have: 
			\begin{equation*}
			\mass \rightarrow |\Omega| z_{min} \sum_{\idxmat=1}^{N} \rho_\idxmat(\underline{\matm}_\idxmat) < \masslim\, ,
			\end{equation*}		
			therefore, by continuity we have at least one $\Lambda$ such that $\mass(\Lambda,\vec{\mu}^\Lambda) = \masslim$. Moreover we know that $\mass$ is \textit{strictly} decreasing in $\Lambda$, unless the set $\freeIDXz$ is empty. But for the values of $\Lambda$ for which the set $\freeIDXz$ is empty, we have $\mass \leq |\Omega| z_{min} \sum_{\idxmat=1}^{N} \rho_\idxmat(\overline{\matm}_\idxmat) < \masslim$, then we can conclude that the values of $\Lambda$ fulfilling the mass constraint lie in the region of strict monotonicity of the mass. Therefore the choice of $\Lambda$ is unique.
		\end{itemize}
	\end{proof}
\end{appendices}


\bibliographystyle{elsarticle-num}
\bibliography{biblioTopOpt,biblioHomogenization,biblioCH,biblioOthers}

\end{document}